\documentclass{amsart}

\usepackage[utf8]{inputenc}
\usepackage{amsmath}
\usepackage{amssymb}
\usepackage{mathdots}
\usepackage{mathtools}
\usepackage{mathrsfs}
\usepackage{tikz}
\usepackage{tikz-cd}
\usetikzlibrary{decorations.markings,arrows.meta,quotes}
\usepackage[inline]{enumitem}
\usepackage{color}
\usepackage{booktabs}
\definecolor{linkcolor}{rgb}{0,0,0.6}
\usepackage{amsthm}
\usepackage{bbm}
\usepackage{multicol}

\usepackage[english]{babel}
\usepackage{csquotes}

\usepackage[colorlinks=true,
pdfstartview=Fit,
linkcolor= linkcolor,
citecolor= linkcolor,
urlcolor= linkcolor,
hyperindex=true,
hyperfigures=false]
{hyperref}

\usepackage{orcidlink}

\tikzset{
  goodcycle/.style={draw=blue!65!black,fill=blue!8,
    rounded corners=2pt,inner sep=4pt},
  badcycle/.style={draw=red!70!black,fill=red!8,
    rounded corners=2pt,inner sep=4pt}
}

\title[Ekedahl-Oort strata meeting the supersingular locus]{Ekedahl-Oort strata meeting the supersingular locus}
\author{Joseph Muller\,\orcidlink{0000-0002-1546-0910}}\thanks{National Center for Theoretic Sciences, National Taiwan University, No. 1, Sec. 4, Roosevelt Rd., Taipei, Taiwan, muller@ncts.ntu.edu.tw}
\date{}
\newtheorem{theorem}{Theorem}[section]

\newtheorem{proposition}[theorem]{Proposition}
\newtheorem{lemma}[theorem]{Lemma}
\newtheorem{corollary}[theorem]{Corollary}

\theoremstyle{definition}
\newtheorem{definition}[theorem]{Definition}
\theoremstyle{remark}
\newtheorem{remark}[theorem]{Remark}

\begin{document}

\subjclass[2020]{14G35, 14K10, 14L05}
\keywords{Ekedahl-Oort stratification, Siegel modular varieties}

\begin{abstract}
We give an explicit combinatorial criterion for an Ekedahl-Oort (EO) stratum in the moduli stack of principally polarized abelian varieties of dimension $g$ in characteristic $p > 2$ to meet the supersingular locus. The criterion is obtained by translating a non-emptiness criterion for basic affine Deligne-Lusztig varieties, into an explicit condition on the elementary sequences indexing the EO strata. Furthermore, we give explicit bounds for the number of EO strata of $p$-rank zero which are disjoint from the supersingular locus. As a result, we prove that asymptotically almost every EO stratum of $p$-rank zero meets the supersingular locus. As an application, we also give a lower bound for the number of EO strata meeting the supersingular locus on unitary PEL Shimura varieties of signature $(a,b)$.
\end{abstract}

\maketitle

\section*{Introduction}

The geometry in positive characteristic of the moduli space of principally polarized abelian varieties is closely related to the invariants of their $p$-divisible groups. Two natural stratifications are defined by the Newton polygon and by the isomorphism class of the polarized $p$-torsion, respectively. The latter is the Ekedahl-Oort (EO) stratification, studied systematically by Oort in \cite{oortStratificationModuliSpace2001a}. Its strata are indexed by the ``elementary sequences'', and their dimensions and closure relations admit explicit combinatorial descriptions. The Newton stratification records the isogeny class of the $p$-divisible group, see \cite{oortNewtonPolygonStrata2001}. An EO stratum need not be contained in a single Newton stratum, and determining which strata meet is a natural problem in the study of these two stratifications.

Fix an odd prime $p$, and write $\mathcal A_g$ for the moduli stack over $\mathbb F_p$ of principally polarized abelian varieties of dimension $g$. In this paper, we consider its supersingular locus $\mathcal S_g$, the closed Newton stratum on which all Newton slopes are $1/2$. We ask which EO strata of $\mathcal A_g$ meet $\mathcal S_g$. Explicit descriptions of these intersections in genera $3$ and $4$ were obtained by Shimada and Takamatsu \cite{shimada_supersingular_2025}, while Groen, Lupoian and Parker \cite{groenIntersectionsEkedahlOortNewton2026} determined the intersections of the EO and Newton strata in genus $5$. General non-emptiness criteria for affine Deligne-Lusztig varieties also apply to this question in arbitrary genus.

Our first goal is to express the answer directly in terms of the elementary sequence indexing an EO stratum. Given such a sequence, we construct a directed graph, the ``cycle graph'', whose vertices are the cycles of an explicit permutation of $\{1, \ldots, g\}$. These cycles are divided into two classes, called good and bad. Theorem \ref{thm:main} states that the EO stratum meets the supersingular locus if and only if every bad cycle is reachable from a good cycle. The construction of the graph is given in Section \ref{Section1}. Equivalently, it characterizes the polarized $\mathrm{BT}_1$-modules which occur as the $p$-torsion of principally quasi-polarized supersingular $p$-divisible groups.

The proof uses the group-theoretic description of the EO stratification through the hyperspecial EKOR parametrization of He and Rapoport \cite{heStratificationsReductionShimura2017}. We identify its indices explicitly with elementary sequences, and reduce the question to the non-emptiness of a basic affine Deligne-Lusztig variety. We then apply Theorem \ref{thm:schremmer}, proved by Schremmer in \cite{schremmerNewtonStrataLevi2024} Proposition 5, following a conjecture of Lim in \cite{lim_nonemptiness_2026}. The main step is to translate the group theoretic conditions in that theorem into the reachability condition on the cycle graph. These comparisons and computations occupy Sections \ref{Section2} and \ref{Section3}.

We then study the number $N_g$ of EO strata meeting $\mathcal S_g$. Among the $2^g$ EO strata of $\mathcal A_g$, precisely $2^{g-1}$ have $p$-rank zero, and only these can meet the supersingular locus. Theorem \ref{thm:asymptotic-count} gives explicit upper and lower bounds for the number of $p$-rank zero EO strata which do not meet $\mathcal S_g$, valid for every $g \geq 3$. These imply 
\begin{equation*}
    2^{g-1} - N_g = \Theta \left(\frac{2^g}{g}\right), 
\end{equation*}
and therefore $N_g \sim 2^{g-1}$ as $g$ goes to infinity. Thus, asymptotically almost every EO stratum of $p$-rank zero meets the supersingular locus, and the proportion of EO strata of $p$-rank zero which do not meet $\mathcal S_g$ has order $1/g$. The proof in Section \ref{sec:enumeration} uses a particular bijection between elementary sequences of $p$-rank zero and compositions of the integer $g$, adapted to the construction of the cycle graph. It allows us to compare the number of $p$-rank zero EO strata disjoint from $\mathcal S_g$ with the number of compositions whose first part is maximal. 

Finally, in Section \ref{sec:unitary-application}, we record an application of these results to unitary EO strata, using the forgetful and Serre tensor constructions considered in \cite{andrews_classification_2026} and \cite{andrewsEkedahlOortStrata$mathsfGUq22$2025a}. For a unitary PEL Shimura variety of signature $(a,b)$ with $a \geq b \geq 1$, at hyperspecial level at an odd prime inert in the underlying imaginary quadratic field, Corollary \ref{cor:unitary-arbitrary} shows that at least $N_b$ distinct EO strata meet the supersingular locus.

Independently and simultaneously, Shimada obtained in \cite{shimada_number_2026} a criterion for EO strata to meet the supersingular locus, thus giving a statement equivalent to Theorem \ref{thm:main} below, and proved that $N_g \sim 2^{g-1}$ as well. Neither author was aware of the other's work during its development.

\section{Ekedahl-Oort stratification and cycle graphs}\label{Section1}

Throughout, $p$ will always refer to an odd prime number. Over any perfect field of characteristic $p$, $\sigma$ denotes the automorphism $x \mapsto x^p$. We write $\mathbb D(\cdot)$ for the \textit{covariant} Dieudonné functor. We write $\mathcal A_g$ for the moduli stack over $\mathbb F_p$ of principally polarized abelian varieties of dimension $g \geq 1$, and $\mathcal S_g \subseteq \mathcal A_g$ for its supersingular locus, see \cite{oortStratificationModuliSpace2001a,oortNewtonPolygonStrata2001}.

\subsection{The Ekedahl-Oort stratification}

An \textit{elementary sequence} is a sequence of integers $0 = \varphi_0 \leq \varphi_1 \leq \cdots \leq \varphi_g$ such that $\varphi_i \leq \varphi_{i-1}+1$ for all $1\leq i \leq g$. We define
\begin{equation*}
    \forall 1 \leq i \leq g, \quad\delta_i \coloneq \varphi_i - \varphi_{i-1} \in \{0,1\}, \qquad P_{\delta} \coloneq \{i \mid \delta_i = 1\}, \qquad Z_{\delta} \coloneq \{i \mid \delta_i = 0\}.
\end{equation*}
We call $\delta = (\delta_i)_{1\leq i \leq g} \in \{0,1\}^g$ the \textit{increment sequence} of $\varphi$. When the context is clear, we will write $P = P_{\delta}$ and $Z = Z_{\delta}$. The elementary sequence $\varphi$ is determined by $\delta$ via 
\begin{equation*}
    \varphi_i = \sum_{j=1}^i \delta_j,
\end{equation*}
so that there is a total of $2^g$ elementary sequences. We define a permutation $\pi_{\delta} \in \mathfrak S_g$ as follows 
\begin{equation}\label{eq:pi-definition}
    \pi_{\delta}(i) \coloneq \begin{cases}
        \varphi_i & \text{if } i \in P,\\
        g - (i - \varphi_i) +1 & \text{if } i \in Z. \end{cases}
\end{equation}
In other words, $\pi_{\delta}$ re-orders the integers in $P$ increasingly from $1$ to $\varphi_g = |P|$, and the integers in $Z$ decreasingly from $g$ to $\varphi_g + 1 = |P|+1$. 

\begin{definition}
    A \textit{$\mathrm{BT}_1$-module} over a perfect field $k$ of characteristic $p$ is a finite-dimensional $k$-vector space $M$ equipped with a $\sigma$-linear operator $F:M \to M$ and a $\sigma^{-1}$-linear operator $V: M \to M$ such that 
    \begin{equation*}
        FV = VF = 0, \qquad \mathrm{Ker}(F) = \mathrm{Im}(V), \qquad \mathrm{Ker}(V) = \mathrm{Im}(F).
    \end{equation*}
    A \textit{polarized $\mathrm{BT}_1$-module} over $k$ is a $\mathrm{BT}_1$-module $M$ equipped with a non-degenerate alternating $k$-bilinear form $\langle \cdot,\cdot \rangle$ such that 
    \begin{equation*}
        \forall x,y \in M, \qquad \langle Fx, y \rangle = \langle x, Vy \rangle^p.
    \end{equation*}
\end{definition}

If $M$ is a polarized $\mathrm{BT}_1$-module, then $\dim_k M$ is even. Moreover, if $(A,\lambda) \in \mathcal A_g(k)$ then the pair $M_{A,\lambda} = (\mathbb D(A[p]), \langle \cdot, \cdot \rangle_{\lambda})$ is a polarized $\mathrm{BT}_1$-module, where $\langle \cdot, \cdot \rangle_{\lambda}$ is induced by the principal polarization $\lambda$. Over an algebraically closed field $k$, the isomorphism classes of polarized $\mathrm{BT}_1$-modules of dimension $2g$ are in bijection with elementary sequences $\varphi$ as above, see \cite{oortStratificationModuliSpace2001a} Section 9 and also \cite{priesBT_1GroupSchemes2021} Section 4 for a modern account. Oort provides the construction of a standard polarized $\mathrm{BT}_1$-module $M_{\delta}$ over $\mathbb F_p$, such that its scalar extension to $k$ is a representative of the isomorphism class corresponding to the elementary sequence $\varphi$ with increment sequence $\delta$. His construction can be restated as follows, namely $M_{\delta}$ (denoted $A_{\varphi}$ in \cite{oortStratificationModuliSpace2001a}) has a symplectic basis 
\begin{equation}\label{eq:symplectic-basis}
    X_1, \ldots, X_g, Y_1, \ldots, Y_g, \qquad \langle X_i, Y_j \rangle = \delta_{ij}, \qquad \langle X_i, X_j \rangle = \langle Y_i, Y_j \rangle = 0,
\end{equation}
where $1 \leq i,j \leq g$ and $\delta_{ij}$ is the Kronecker delta, and such that 
\begin{align}
    \forall i \in P, & \qquad F X_i = X_{\pi_{\delta}(i)}, & V Y_{\pi_{\delta}(i)} & = Y_i, & V X_{\pi_{\delta}(i)} & = 0, \label{eq:normal-P}\\
    \forall i \in Z, & \qquad F X_i = Y_{\pi_{\delta}(i)}, & V X_{\pi_{\delta}(i)} & = -Y_i, & V Y_{\pi_{\delta}(i)} & = 0,\label{eq:normal-Z}
\end{align}
and $F Y_{i} = 0$ for all $1 \leq i \leq g$. 

\begin{definition}
    Given an elementary sequence $\varphi$ with increment sequence $\delta$, the associated \textit{Ekedahl-Oort (EO) stratum} $\mathcal A_{g,\delta}$ is the reduced locally closed substack of $\mathcal A_g$ whose geometric points, say over an algebraically closed field $k$, are those principally polarized abelian varieties $(A, \lambda)$ such that $M_{A,\lambda} \simeq (M_{\delta}, \langle \cdot, \cdot \rangle) \otimes_{\mathbb F_p} k$ as polarized $\mathrm{BT}_1$-modules.
\end{definition}

All together, the EO strata form the Ekedahl-Oort stratification of $\mathcal A_g$. Each stratum $\mathcal A_{g,\delta}$ is a non-empty smooth Deligne-Mumford stack over $\mathbb F_p$, of pure dimension
\begin{equation}\label{eq:EO-dimension}
    \dim \mathcal A_{g,\delta} = \sum_{i=1}^g \varphi_i = \sum_{i=1}^g (g+1-i) \delta_i,
\end{equation}
see \cite{oortStratificationModuliSpace2001a} Theorem 1.2 for non-emptiness and dimension, and \cite{viehmann_ekedahloort_2013} Proposition 10.3 for smoothness. For convenience, we recall that if $(A,\lambda) \in \mathcal A_{g,\delta}(k)$, then 
\begin{equation}\label{eq:classical-invariants}
    a(A) = g - \varphi_g = |Z|, \qquad 
    p\mathrm{-rk}(A) = \max\{ 0 \leq i \leq g \mid \varphi_i = i\} =  \begin{cases}
        \min(Z) - 1 & \text{if } Z \not = \emptyset, \\
        g & \text{if } Z = \emptyset,
    \end{cases} 
\end{equation}
where $a(A)$ is the $a$-number of $A$ and $p\mathrm{-rk}(A)$ is its $p$-rank.

\subsection{The cycle graph of an elementary sequence}

Let us fix an elementary sequence $\varphi$ and its increment sequence $\delta$. We define 
\begin{equation}\label{eq:signs}
    \varepsilon_i \coloneq 2\delta_i - 1 = \begin{cases}
        +1 & \text{if } \delta_i = 1,\\
        -1 & \text{if } \delta_i = 0.
    \end{cases}
\end{equation}
We consider the decomposition of $\pi_{\delta} \in \mathfrak S_g$ as a product of cycles. Here, cycles are allowed to have cardinality $1$, thus corresponding to fixed points of $\pi_{\delta}$. Let $C$ be a cycle of $\pi_{\delta}$ and put
\begin{equation*}
    \varepsilon(C) \coloneq \prod_{i\in C} \varepsilon_i = (-1)^{|C\cap Z|}.
\end{equation*}
If $\varepsilon(C) = 1$, an \textit{orientation} of $C$ is a map
\begin{equation}\label{eq:orientation}
    t_C : C \longrightarrow \{ \pm 1 \} \text{ such that } \forall i \in C, \quad t_C(\pi_{\delta}(i)) = \varepsilon_i t_C(i).
\end{equation}
There are exactly two orientations, which differ by multiplication by $-1$. 

\begin{definition}\label{def:bad-cycle}
    A cycle $C$ of $\pi_{\delta}$ is said to be \textit{bad} if the following three conditions are satisfied:
    \begin{itemize}
        \item $C \cap P \not = \emptyset$, 
        \item $\varepsilon(C) = 1$, 
        \item there is an orientation $t_C$ such that $t_C(i) = 1$ for all $i \in C \cap P$.
    \end{itemize}
    If $C$ is bad, the orientation $t_C$ given by the third condition is unique, and will always be denoted by $t_C$. Every cycle which is not bad is called \textit{good}.
\end{definition}

Given a bad cycle $C$, starting with some element of $C \cap P$, one may write the cycle schematically as 
\begin{equation*}
    P Z^{a_1} P Z^{a_2} \cdots P Z^{a_r},
\end{equation*}
for some $r \geq 1$ and $a_1, \ldots, a_r \geq 0$. In other words, the integers $a_i$ are the numbers of elements of $C \cap Z$ lying between two consecutive elements of $C \cap P$ in the cyclic order. Since $\varepsilon(C) = 1$, we know that the sum $a_1 + \cdots + a_r$ is even. Then the existence of an orientation $t_C$ such that $t_C(i) = 1$ for all $i \in C \cap P$ is equivalent to the requirement that $a_i$ is even for all $1 \leq i \leq r$. 

\begin{definition}\label{defi:cycle_graph}
    We write $\mathcal B_{\delta}$ and $\mathcal G_{\delta}$ respectively for the set of bad cycles and good cycles of $\pi_{\delta}$. The \textit{cycle graph} $\Gamma_{\delta}$ of $\pi_{\delta}$ is the directed graph whose vertices are the cycles $C \in \mathcal B_{\delta} \sqcup \mathcal G_{\delta}$, and such that there is an arrow $C \to C'$ if and only if the following conditions hold:
    \begin{itemize}
        \item $C \not = C'$,
        \item $C' \in \mathcal B_{\delta}$,
        \item there is $i \in C \cap P$ and $j \in C' \cap Z$ such that $i < j$ and $t_{C'}(j) = 1$.
    \end{itemize}
\end{definition}

In particular, the head of any arrow in $\Gamma_{\delta}$ is bad, whereas the tail can be either good or bad.

\begin{definition}\label{def:star}
    We say that $\delta$ satisfies condition $(*)_{\delta}$ if every bad cycle is reachable in $\Gamma_{\delta}$ from a good cycle.
\end{definition}

In other words, condition $(*)_{\delta}$ holds if and only if for every $C \in \mathcal B_{\delta}$, there exists $r \geq 1$ and cycles $C_0, \ldots , C_r = C$ such that $C_0 \in \mathcal G_{\delta}$ and $C_{i-1} \to C_{i}$ for every $1 \leq i \leq r$.

\begin{lemma}\label{lem:no-incoming}
    Condition $(*)_{\delta}$ fails if and only if there is a non-empty subset $\mathcal U \subseteq \mathcal B_{\delta}$ such that for every arrow $C \to C'$, we have
    \begin{equation*}
        C' \in \mathcal U \quad \implies \quad C \in \mathcal U.
    \end{equation*}
\end{lemma}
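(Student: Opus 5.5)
This is a purely graph-theoretic statement about the finite directed graph $\Gamma_{\delta}$, and I will prove the two implications separately. The key tool is the set of bad cycles that are \emph{not} reachable from any good cycle,
\begin{equation*}
    \mathcal U_0 \coloneq \{ C \in \mathcal B_{\delta} \mid \text{there is no path } C_0 \to \cdots \to C_r = C \text{ with } r \geq 1 \text{ and } C_0 \in \mathcal G_{\delta} \}.
\end{equation*}
By Definition \ref{def:star}, condition $(*)_{\delta}$ fails exactly when $\mathcal U_0 \neq \emptyset$.

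Suppose first that $(*)_{\delta}$ fails, and take $\mathcal U = \mathcal U_0$, which is non-empty. Let $C \to C'$ be an arrow with $C' \in \mathcal U_0$; we must show $C \in \mathcal U_0$. There are two ways this could fail.
\begin{itemize}
    \item If $C$ were good, the single arrow $C \to C'$ would be a path from a good cycle to $C'$, which is impossible.
    \item If $C$ were bad but reachable from a good cycle $C_0$ by a path $C_0 \to \cdots \to C$, then appending the arrow $C \to C'$ would give a path from $C_0$ to $C'$, again a contradiction.
\end{itemize}
Hence $C$ is bad and unreachable, that is, $C \in \mathcal U_0$.

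Conversely, suppose $\mathcal U \subseteq \mathcal B_{\delta}$ is non-empty and closed under taking tails of arrows, and pick $C \in \mathcal U$. Assume for contradiction that $(*)_{\delta}$ holds. Then there is a path $C_0 \to C_1 \to \cdots \to C_r = C$ with $C_0 \in \mathcal G_{\delta}$. By descending induction on $i$, using the closure property on each arrow $C_{i-1} \to C_i$, every $C_i$ lies in $\mathcal U$. In particular $C_0 \in \mathcal U \subseteq \mathcal B_{\delta}$. This contradicts $C_0$ being good, since $\mathcal B_{\delta}$ and $\mathcal G_{\delta}$ are disjoint. So $(*)_{\delta}$ fails.

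There is no real obstacle. The only point needing care is that paths have length $r \geq 1$. This is harmless in both directions: appending an arrow to a path, or to a length-zero start at a good cycle, still produces a path of length at least one. Note also that a good cycle is never the head of an arrow, so reachability from good cycles concerns only bad targets, consistent with the definition of $\mathcal U_0$.
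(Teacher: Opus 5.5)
Your proposal is correct and follows essentially the same argument as the paper. In the forward direction you take the set of unreachable bad cycles, and in the converse you propagate membership in $\mathcal U$ backward along a path from a good cycle. Your descending induction does the same job as the paper's choice of the smallest index $j$ with $C_j \in \mathcal U$.
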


\begin{proof}
If $(*)_{\delta}$ fails, take for $\mathcal U$ the set of all bad cycles which are not reachable from a good cycle. Any arrow into $\mathcal U$ from its complement would make its target reachable from a good cycle, leading to a contradiction. Conversely, if $\mathcal U$ is as in the Lemma, assume that a sequence of cycles $C_0, \ldots , C_r$ with $r \geq 1$, $C_0 \in \mathcal G_{\delta}$, $C_r \in \mathcal U$ and $C_{i-1} \to C_i$ for all $1 \leq i \leq r$ exists. Let $0 \leq j \leq r$ be the smallest integer such that $C_j \in \mathcal U$. Then necessarily $j \geq 1$ and the arrow from $C_{j-1}$ to $C_j$ would imply that $C_{j-1} \in \mathcal U$, contradicting the minimality of $j$.
\end{proof}

Our main result is the following.

\begin{theorem}\label{thm:main}
    Let $\varphi$ be an elementary sequence and $\delta$ be its increment sequence. Let $k$ be an algebraically closed field of characteristic $p$. The following conditions are equivalent.
    \begin{enumerate}[label = \textup{(\roman*)}]
        \item there is a principally quasi-polarized supersingular $p$-divisible group $X$ over $k$ such that $\mathbb D(X[p]) \simeq (M_{\delta}, \langle \cdot, \cdot \rangle) \otimes_{\mathbb F_p} k$ as polarized $\mathrm{BT}_1$-modules,
        \item $\mathcal A_{g,\delta} \cap \mathcal S_g \not = \emptyset$,
        \item condition $(*)_{\delta}$ holds.
\end{enumerate}
\end{theorem}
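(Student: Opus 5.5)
The plan has three steps. First, dispose of (i)$\Leftrightarrow$(ii). Second, turn (ii) into a group-theoretic non-emptiness question. Third, translate the resulting criterion into the combinatorics of $\Gamma_\delta$, which is the real work.

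\textbf{Step 1: (i)$\Leftrightarrow$(ii).} The implication (ii)$\Rightarrow$(i) is immediate: for $(A,\lambda)\in\mathcal A_{g,\delta}\cap\mathcal S_g$, take $X=A[p^\infty]$ with the quasi-polarization induced by $\lambda$. For (i)$\Rightarrow$(ii), start from $X$ as in (i). By Serre--Tate and Grothendieck--Messing deformation theory, or more simply by the fact that every principally quasi-polarized supersingular $p$-divisible group is isogenous to $E^g[p^\infty]$, we realise $X$ as $A[p^\infty]$ for some principally polarized supersingular abelian variety $A$. The realisation goes through Rapoport--Zink uniformization of the supersingular locus: the basic Rapoport--Zink space surjects onto $\mathcal S_g$ up to the action of $I(\mathbb Q)$, and every point of that space is realised. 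The EO type of $A$ is then read off from $\mathbb D(X[p])$, so $A$ lies in $\mathcal A_{g,\delta}\cap\mathcal S_g$.

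\textbf{Step 2: reduction to a basic affine Deligne--Lusztig variety.} Work with $G=\mathrm{GSp}_{2g}$ over $\breve{\mathbb Q}_p$ with hyperspecial $K$. The He--Rapoport EKOR parametrisation writes EO strata as indexed by ${}^K\mathrm{Adm}(\mu)$, that is, by minimal length elements $w$ in the cosets $W_K\backslash W_K t^{\mu}W_K$. Concretely, each such element is $w=t^{\mu'}x$ with $x\in{}^K W$ $\sigma$-straight-adjusted. I would compute the Dieudonné lattice of $M_\delta$ in the basis \eqref{eq:symplectic-basis}. From it I would check that the element attached to $\delta$ is $w_\delta=t^{\mu_\delta}\tau\,\dot\pi$, where the finite part acts on $\{\pm1,\dots,\pm g\}$ through the signed permutation $i\mapsto \varepsilon_i\,\pi_\delta(i)$ read from \eqref{eq:normal-P}--\eqref{eq:normal-Z}. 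The EO stratum then meets the basic locus if and only if $X_{w_\delta}(b_{\mathrm{basic}})\neq\emptyset$, by He--Rapoport's fibration of the EKOR strata over $X_w(b)$ and the identification of $\mathcal S_g$ as the basic Newton stratum.

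\textbf{Step 3: translating Schremmer's criterion.} This is the main obstacle. I would apply Theorem~\ref{thm:schremmer}, which I expect has the following shape: for $w=xt^{\lambda}y$ in ${}^K\widetilde W$, $X_w(b_{\mathrm{basic}})\ne\emptyset$ if and only if $w$ is not contained in a proper $\sigma$-stable Levi in an appropriate support sense. Equivalently, the criterion asks that a certain $\sigma$-twisted support (a subset of simple affine reflections, or a set of roots generated by the cycle data) be everything. The translation would proceed as follows.

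\begin{enumerate}[label=(\alph*)]
\item Express the relevant support in terms of the orbits of the signed permutation. An orbit of $i\mapsto\varepsilon_i\pi_\delta(i)$ on $\{\pm1,\dots,\pm g\}$ is either a single $\pm$-symmetric orbit, which happens exactly when $\varepsilon(C)=-1$, or a pair of opposite orbits, which happens when $\varepsilon(C)=1$. In the second case a choice of orientation $t_C$ picks one of the two.
\item Check that "good" cycles are exactly those whose contribution to the translation part already forces the non-emptiness locally, i.e.\ they contribute a non-central component. The three cases are $C\cap P=\emptyset$, $\varepsilon(C)=-1$, and the case where some $P$-element carries sign $-1$, meaning an odd block $a_i$. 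Bad cycles behave like a block of a proper Levi of type $\mathrm{GL}$ on which the element is $\sigma$-straight and non-basic.
\item Show that an arrow $C\to C'$ corresponds to a positive root between the blocks of $C$ and $C'$, namely $e_i-e_j$ with $i\in C\cap P$, $j\in C'\cap Z$, $i<j$ and $t_{C'}(j)=1$. Such a root is exactly what lets the support propagate from $C$ into $C'$.
\end{enumerate}

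With (a)--(c) in hand, the non-emptiness condition becomes the statement that no non-empty set $\mathcal U$ of bad cycles is closed under incoming arrows. By Lemma~\ref{lem:no-incoming}, this is equivalent to $(*)_\delta$. The delicate points are the bookkeeping of signs and orientations under $\sigma$, and verifying that arrows between two bad cycles propagate the support correctly. I would test both on small $g$ against the known genus $3$ and $4$ results.
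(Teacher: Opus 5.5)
\textbf{Verdict: genuine gap.} Your architecture matches the paper's. (i)$\Leftrightarrow$(ii) is standard; the paper simply cites Harashita's result that every principally quasi-polarized $p$-divisible group over $k$ comes from a principally polarized abelian variety. Condition (ii) is then reduced to $X_{v_\delta}(\dot\tau)\neq\emptyset$, and the argument ends with Lemma~\ref{lem:no-incoming}. Your index $t^{\mu_\delta}\tau\dot\pi$ is garbled: with $\tau=t^\mu\omega$ its finite part would be $\omega\widetilde\pi_\delta$, not $\widetilde\pi_\delta$. The paper proves $v_\delta=t^\mu\widetilde\pi_\delta=\tau\widetilde\pi_{\delta^c}$ by an explicit $K$-$\sigma$-conjugation, and that exact form is what the rest of the argument uses.

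\textbf{Step 3 is not carried out, and the criterion you guess is wrong.} Step 3 is the whole new content of the theorem. Theorem~\ref{thm:schremmer} has two clauses: $X_v(b)=\emptyset$ if and only if $W_{\mathrm{supp}_\sigma(v)}$ is infinite \emph{and} some length-positive $w\in\mathrm{LP}(v)$ gives $\mathrm{supp}(w^{-1}\mathrm{pr}_0(v)w)\subsetneq S$. Your version (non-empty iff $v$ avoids a proper $\sigma$-stable Levi, or iff some $\sigma$-twisted support is everything) already fails for $\delta=(0,\ldots,0)$ with $g\geq 2$:
\begin{itemize}
\item $v_\delta=\tau$ has empty $\sigma$-support.
\item $\mathrm{pr}_0(\tau)=\omega$ is conjugate into a proper parabolic: take $w^{-1}$ with $\xi_1=1$, $\xi_g=-2$, and then $w^{-1}\omega w$ swaps $1$ and $2$.
\item Yet $1\cdot I\in X_\tau(\dot\tau)$, so $X_\tau(\dot\tau)\neq\emptyset$; these are the superspecial points.
\end{itemize}
So you need three things here: the finite-$\sigma$-support alternative; the computation that $W_{\mathrm{supp}_\sigma(v_\delta)}$ is infinite iff $P\neq\emptyset$ and $2\min P\leq g+1$; and a proof that in the finite case there are no bad cycles. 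These are Lemmas~\ref{lem:affine-support} and~\ref{lem:right-half}.

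\textbf{Items (b) and (c) lack the actual mechanism.} That mechanism is the explicit description of $\mathrm{LP}(v_\delta)$ in Lemma~\ref{lem:LP-explicit}. Write $w^{-1}=[\xi_1,\ldots,\xi_g]$. Then $w$ is length-positive iff
\begin{itemize}
\item $\xi_i>0$ for every $i\in P$, and
\item $\xi_i<\xi_j$ whenever $i<j$, $i\in P$, $j\in Z$ and $\xi_j>0$.
\end{itemize}
Combine these with $(w^{-1}\widetilde\pi_\delta w)(|\xi_i|)=\mathrm{sgn}(\xi_i)\varepsilon_i\xi_{\pi_\delta(i)}$. The first condition explains badness: a cycle meeting $P$ that is sent into a preserved block $\{1,\ldots,k\}$ must carry an orientation positive on $C\cap P$. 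The second condition explains arrows: the set of such cycles must be closed under incoming arrows. So arrows are constraints on the choice of $w$, not a route along which a support propagates through the root $e_i-e_j$.

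\textbf{Both directions of Proposition~\ref{prop:bridge} still need proofs.}
\begin{itemize}
\item From a closed set $\mathcal U$ of bad cycles, build $w$: put $S_{\mathcal U}\cap P$ first, then $S_{\mathcal U}\cap Z$, then everything else, with signs $t_C$ on $S_{\mathcal U}$ and $\varepsilon_i$ elsewhere.
\item From $w$, extract $\mathcal U$. Here you must show that the preserved block meets $P$. That step uses $2\min P\leq g+1$, and an all-$Z$ block is exactly what happens in the superspecial example above.
\end{itemize}
Checking genus $3$ and $4$ cannot replace these arguments.
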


The implication \textup{(ii)} $\implies$ \textup{(i)} is due to $\mathcal A_{g,\delta} \subset \mathcal A_g$ being of finite type, and the converse \textup{(i)} $\implies$ \textup{(ii)} follows from Proposition 5.3 of \cite{harashita_generic_2010}, stating that any principally quasi-polarized $p$-divisible group over $k$ is isomorphic to the $p$-divisible group of a principally polarized abelian variety over $k$. Thus, the new contribution is the equivalence with \textup{(iii)}. 

\begin{lemma}\label{lem:positive-p-rank}
    If $\delta_1 = 1$, then none of the three conditions in Theorem \ref{thm:main} holds.
\end{lemma}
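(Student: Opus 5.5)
Both parts reduce to the fact that $1$ is a fixed point of $\pi_{\delta}$ lying in $P$. On the abelian-variety side this fixed point is exactly the positive $p$-rank; on the combinatorial side it is an isolated bad cycle. I would treat conditions (i)--(ii) and condition (iii) separately.

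\emph{Conditions (i) and (ii).} By the remark following Theorem \ref{thm:main}, (ii) implies (i), so it suffices to show that (i) fails.

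If $\delta_1 = 1$, then $\varphi_1 = 1$. By \eqref{eq:classical-invariants}, every point of $\mathcal A_{g,\delta}$ has $p$-rank at least $1$. This can be read off directly from $M_{\delta}$: since $1 \in P$ and $\pi_{\delta}(1) = \varphi_1 = 1$, we have $FX_1 = X_1$. Hence $M_{\delta}\otimes k$ contains a nonzero direct summand on which $F$ is bijective, namely the $F$-stable part of the semisimple/nilpotent (Fitting) decomposition of $F$.

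Now suppose $X$ is a $p$-divisible group over $k$ with $\mathbb D(X[p]) \simeq M_{\delta}\otimes k$. Over the perfect field $k$, $X$ splits as a product of pieces of slope $0$, of slopes in $(0,1)$, and of slope $1$. The $F$-bijective part of $\mathbb D(X[p])$ is exactly the $p$-torsion of the slope-$0$ part of $\mathbb D(X)$. (This is the slope-$0$ part in the covariant convention; with the other convention one uses $V$, and the conclusion is symmetric.) So $X$ has a nontrivial slope-$0$ part and is not supersingular. Therefore (i) fails, and hence (ii) fails.

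\emph{Condition (iii).} Again because $\pi_{\delta}(1) = 1$, the singleton $C = \{1\}$ is a cycle of $\pi_{\delta}$. It is bad:
\begin{itemize}
\item $C \cap P = \{1\} \neq \emptyset$;
\item $\varepsilon(C) = \varepsilon_1 = 1$;
\item $t_C(1) = 1$ is an orientation, since $t_C(\pi_\delta(1)) = \varepsilon_1 t_C(1)$ holds trivially.
\end{itemize}
An arrow $C' \to C$ in $\Gamma_{\delta}$ would require some $j \in C \cap Z$. But $C \cap Z = \emptyset$, so $C$ has no incoming arrows. Taking $\mathcal U = \{C\}$ in Lemma \ref{lem:no-incoming}, the implication there holds vacuously, so $(*)_{\delta}$ fails.

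The only step needing care is the Dieudonné-theoretic one: justifying that an $F$-bijective summand of $\mathbb D(X[p])$ forces a slope-$0$ summand of $X$. I would handle it via the connected–étale (equivalently multiplicative) decomposition of $X$ over a perfect field, using that the height of that summand equals the $p$-rank of $X[p]$.
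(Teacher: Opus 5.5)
Your proof is correct and follows the paper's argument: (iii) fails because $\{1\}$ is a bad singleton cycle with $C\cap Z=\emptyset$, hence with no incoming arrows, and (i)/(ii) fail because $\delta_1=1$ forces positive $p$-rank, which is incompatible with supersingularity. The only difference is which of (i) and (ii) you treat directly. The paper applies \eqref{eq:classical-invariants} to abelian varieties to rule out (ii), and gets (i) from the equivalence (i)$\Leftrightarrow$(ii), which relies on Harashita's result. You run the $p$-rank argument on an arbitrary $p$-divisible group to rule out (i), then obtain (ii) from the easy implication (ii)$\Rightarrow$(i), so your version does not need Harashita.
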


\begin{proof}
A supersingular abelian variety has $p$-rank zero, so that $\mathcal A_{g,\delta} \cap \mathcal S_g \not = \emptyset$ implies $\delta_1 = 0$ by \eqref{eq:classical-invariants}. On the other hand, if $\delta_1 = 1$ then $\pi_{\delta}(1) = 1$. The one-element cycle $\{1\}$ is bad and clearly unreachable in $\Gamma_{\delta}$, implying that $(*)_{\delta}$ fails. 
\end{proof}

\subsection{Examples}

Let us illustrate condition $(*)_{\delta}$ and the use of Theorem \ref{thm:main} in some examples. In the figures below, blue vertices are good cycles and red vertices are bad cycles.

\begin{figure}[htbp]
\centering
\begin{minipage}[t]{0.48\textwidth}
\centering
\textup{(a)} $g=3$, $\delta = \mathtt{001}$\par $\varphi=(0,0,1)$\par
\smallskip
\begin{tikzpicture}
  \node[goodcycle] (G1) at (0,0) {$(1,3)$};
  \node[goodcycle] (G2) at (2.7,0) {$(2)$};
\end{tikzpicture}\par
\smallskip
No bad cycle. $(*)_{\delta}$ holds.
\end{minipage}
\hfill
\begin{minipage}[t]{0.48\textwidth}
\centering
\textup{(b)} $g=3$, $\delta=\mathtt{010}$\par
$\varphi=(0,1,1)$\par
\smallskip
\begin{tikzpicture}
  \node[badcycle] (B) at (0,0) {$(1,3,2)$};
\end{tikzpicture}\par
\smallskip
No good cycle. $(*)_{\delta}$ fails.
\end{minipage}

\vspace{5mm}

\begin{minipage}[t]{0.48\textwidth}
\centering
\textup{(c)} $g=4$, $\delta=\mathtt{0100}$\par
$\varphi=(0,1,1,1)$\par
\smallskip
\begin{tikzpicture}
  \node[goodcycle] (G) at (0,0) {$(3)$};
  \node[badcycle] (B) at (2.8,0) {$(1,4,2)$};
\end{tikzpicture}\par
\smallskip
Good and bad cycles, no arrow. \\
$(*)_{\delta}$ fails.
\end{minipage}
\hfill
\begin{minipage}[t]{0.48\textwidth}
\centering
\textup{(d)} $g=7$, $\delta=\mathtt{0101101}$\par
$\varphi=(0,1,1,2,3,3,4)$\par
\smallskip
\begin{tikzpicture}
  \node[goodcycle] (G) at (0,0) {$(1,7,4,2)$};
  \node[badcycle] (B) at (3.5,0) {$(3,6,5)$};
  \draw[->] (G) -- (B);
\end{tikzpicture}\par
\smallskip
Good and bad cycles with arrow. \\
$(*)_{\delta}$ holds.
\end{minipage}
\caption{Four basic examples of cycle graphs $\Gamma_{\delta}$.}
\label{fig:basic-cycle-graphs}
\end{figure}

\begin{figure}[htbp]
\centering
\textup{(e)} $g=10$, $\delta=\mathtt{0100111001}$\par
$\varphi=(0,1,1,1,2,3,4,4,4,5)$\par
\smallskip
\begin{tikzpicture}
  \node[goodcycle] (G) at (0,0) {$(1,10,5,2)$};
  \node[badcycle] (B1) at (4.2,0.65) {$(3,9,6)$};
  \node[badcycle] (B2) at (4.2,-0.65) {$(4,8,7)$};
  \draw[->] (G) -- (B1);
  \draw[->] (G) -- (B2);
\end{tikzpicture}

\vspace{5mm}

\textup{(f)} $g=11$, $\delta=\mathtt{00011010001}$\par
$\varphi=(0,0,0,1,2,2,3,3,3,3,4)$\par
\smallskip
\begin{tikzpicture}
  \node[goodcycle] (G) at (0,0.65) {$(1,11,4)$};
  \node[badcycle] (B1) at (0,-0.65) {$(2,10,5)$};
  \node[badcycle] (B2) at (5.0,0) {$(3,9,6,8,7)$};
  \draw[->] (G) -- (B2);
  \draw[->] (B1) -- (B2);
\end{tikzpicture}

\vspace{5mm}

\textup{(g)} $g=13$, $\delta=\mathtt{0100110110010}$\par
$\varphi=(0,1,1,1,2,3,3,4,5,5,5,6,6)$\par
\smallskip
\begin{tikzpicture}
  \node[goodcycle] (G) at (0,0) {$(3,12,6)$};
  \node[badcycle] (B1) at (4.0,0) {$(1,13,7,10,9,5,2)$};
  \node[badcycle] (B2) at (8.2,0) {$(4,11,8)$};
  \draw[->] (G) -- (B1);
  \draw[->] (B1) -- (B2);
\end{tikzpicture}
\caption{Three more examples of cycle graphs $\Gamma_{\delta}$.}
\label{fig:larger-cycle-graphs}
\end{figure}

We give explanations for the examples (a)-(g) of Figures \ref{fig:basic-cycle-graphs} and \ref{fig:larger-cycle-graphs}.
\begin{enumerate}[label=\textup{(\alph*)}]
\item Here $P = \{3\}$ and $\pi_{\delta}= (1,3)(2)$. The cycle $(1,3)$ is good because it contains an odd number of elements of $Z$, while the cycle $(2)$ is good because it does not meet $P$. Thus $\mathcal B_{\delta} = \emptyset$, and condition $(*)_{\delta}$ holds vacuously.
\item The permutation $\pi_{\delta} = (1,3,2)$ consists of the single bad cycle $(1,3,2)$. Since there is no good cycle, condition $(*)_{\delta}$ fails.
\item We have $\pi_{\delta} = (1,4,2)(3)$, with $(1,4,2)$ being bad and $(3)$ being good. Since the good cycle does not meet $P$, there is no arrow starting from it. Hence the bad cycle is unreachable and condition $(*)_{\delta}$ fails.
\item We have $\pi_{\delta} = (3,6,5)(1,7,4,2)$, with $(3,6,5)$ being bad and $(1,7,4,2)$ being good. The good cycle meets $P$ at $\{2,4,7\}$, and the orientation $t_C$ of the bad cycle satisfies $t_C(5) = 1$, $t_C(3) = 1$ and $t_C(6) = -1$. Since $3 \in Z$, the inequality $2 < 3$ gives the arrow $(1,7,4,2) \to (3,6,5)$, so condition $(*)_{\delta}$ holds.
\item We have $\pi_{\delta} = (1,10,5,2)(3,9,6)(4,8,7)$, the first cycle being good and the two others being bad. The integer $2$ belongs to $P$, whereas the integers $3$ and $4$ belong to $Z$ and the orientations take value $+1$ at them. The inequalities $2 < 3$ and $2 < 4$ give the two arrows with tail the good cycle $(1,10,5,2)$. Besides, there is no arrow between the two bad cycles. The condition $(*)_{\delta}$ holds.
\item We have $\pi_{\delta} = (1,11,4)(2,10,5)(3,9,6,8,7)$, the first cycle being good and the two others being bad. The integers $4$ and $5$ belong to $P$, and the orientation on the longest bad cycle satisfies $t_C(7) = 1$, $t_C(3) = 1$, $t_C(9) = -1$, $t_C(6) = 1$, $t_C(8) = -1$. The inequalities $4 < 6$ and $5 < 6$ give the two displayed arrows with head the bad cycle $(3,9,6,8,7)$. Besides, the orientation on the shortest bad cycle satisfies $t_C(5) = 1$, $t_C(2) = 1$ and $t_C(10) = -1$. Since $2$ is smaller than any element of $(3,9,6,8,7)$, there is no other arrow. The bad cycle $(2,10,5)$ being unreachable, condition $(*)_{\delta}$ fails.
\item We have $\pi_{\delta} = (3,12,6)(1,13,7,10,9,5,2)(4,11,8)$, the first cycle being good and the two others being bad. The integers $6$ and $12$ belong to $P$, as well as $2$, $5$ and $9$. The orientations on the bad cycles satisfy $t_C(5) = 1$, $t_C(2) = 1$, $t_C(1) = 1$, $t_C(13) = -1$, $t_C(7) = 1$, $t_C(10) = -1$, $t_C(9) = 1$ on one hand, and $t_C(8) = 1$, $t_C(4) = 1$, $t_C(11) = -1$ on the other hand. The inequality $6 < 7$ gives the arrow $(3,12,6) \to (1,13,7,10,9,5,2)$, and the inequality $2 < 4$ gives the arrow $(1,13,7,10,9,5,2) \to (4,11,8)$. There is no other arrow, and every bad cycle can be reached from the good cycle. Therefore condition $(*)_{\delta}$ holds.
\end{enumerate}

By Theorem \ref{thm:main}, the EO strata in (a), (d), (e) and (g) meet the supersingular locus, whereas those in (b), (c) and (f) do not.

Let us write 
\begin{equation*}
    N_g \coloneq \# \left\{ \delta \in \{0,1\}^g \mid (*)_{\delta} \text{ holds} \right\}.
\end{equation*}
By Theorem \ref{thm:main}, this is the number of EO strata in $\mathcal A_g$ which meet $\mathcal S_g$. Using a computer program, directly checking condition $(*)_{\delta}$ for the $2^{g-1}$ increment sequences with $p$-rank zero for $1 \leq g \leq 20$, yields the values recorded in Table \ref{tab:small-genus-counts}. These values agree with those independently found by Shimada in \cite{shimada_number_2026} Table 1, which even contains the values of $N_g$ up to $g = 38$. For completeness, the increment sequences contributing to $N_7$ and $N_8$ are listed in Tables \ref{tab:g7-increment-words} and \ref{tab:g8-increment-words}.

\begin{table}[htbp]
    \centering
    \setlength{\tabcolsep}{4pt}
    \begin{tabular}{c|rrrrrrrrrr}
        $g$ & 1 & 2 & 3 & 4 & 5 & 6 & 7 & 8 & 9 & 10 \\
        \hline
        $N_g$ & $1$ & $2$ & $3$ & $6$ & $11$ & $22$ & $44$ & $89$ & $181$ & $370$ \\
        \noalign{\bigskip}
        $g$ & 11 & 12 & 13 & 14 & 15 & 16 & 17 & 18 & 19 & 20 \\
        \hline
        $N_g$ & $753$ & $1\,538$ & $3\,137$ & $6\,381$ & $12\,981$ & $26\,360$ & $53\,454$ & $108\,291$ & $219\,106$ & $442\,858$ \\
        \noalign{\bigskip}
    \end{tabular}
    \caption{The number of EO strata meeting the supersingular locus for $1 \leq g \leq 20$.}
    \label{tab:small-genus-counts}
\end{table}

\begin{table}[htbp]
\centering
\begingroup
\footnotesize
\setlength{\tabcolsep}{5pt}
\begin{tabular}{@{}llllll@{}}
$\mathtt{0000000}$ & $\mathtt{0000001}$ & $\mathtt{0000010}$ & $\mathtt{0000011}$ & $\mathtt{0000100}$ & $\mathtt{0000101}$ \tabularnewline
$\mathtt{0000110}$ & $\mathtt{0000111}$ & $\mathtt{0001001}$ & $\mathtt{0001011}$ & $\mathtt{0001100}$ & $\mathtt{0001101}$ \tabularnewline
$\mathtt{0001111}$ & $\mathtt{0010001}$ & $\mathtt{0010011}$ & $\mathtt{0010101}$ & $\mathtt{0010110}$ & $\mathtt{0010111}$ \tabularnewline
$\mathtt{0011010}$ & $\mathtt{0011011}$ & $\mathtt{0011101}$ & $\mathtt{0011111}$ & $\mathtt{0100001}$ & $\mathtt{0100010}$ \tabularnewline
$\mathtt{0100011}$ & $\mathtt{0100100}$ & $\mathtt{0100101}$ & $\mathtt{0100110}$ & $\mathtt{0100111}$ & $\mathtt{0101001}$ \tabularnewline
$\mathtt{0101011}$ & $\mathtt{0101101}$ & $\mathtt{0101110}$ & $\mathtt{0101111}$ & $\mathtt{0110001}$ & $\mathtt{0110010}$ \tabularnewline
$\mathtt{0110011}$ & $\mathtt{0110101}$ & $\mathtt{0110111}$ & $\mathtt{0111001}$ & $\mathtt{0111010}$ & $\mathtt{0111011}$ \tabularnewline
$\mathtt{0111101}$ & $\mathtt{0111111}$ & & & &
\end{tabular}
\endgroup
\caption{The $44$ increment sequences in genus $7$ for which
condition $(*)_{\delta}$ holds.}
\label{tab:g7-increment-words}
\end{table}

\begin{table}[htbp]
\centering
\begingroup
\footnotesize
\setlength{\tabcolsep}{5pt}
\begin{tabular}{@{}llllll@{}}
$\mathtt{00000000}$ & $\mathtt{00000001}$ & $\mathtt{00000010}$ & $\mathtt{00000011}$ & $\mathtt{00000100}$ & $\mathtt{00000101}$ \tabularnewline
$\mathtt{00000110}$ & $\mathtt{00000111}$ & $\mathtt{00001000}$ & $\mathtt{00001001}$ & $\mathtt{00001010}$ & $\mathtt{00001011}$ \tabularnewline
$\mathtt{00001100}$ & $\mathtt{00001101}$ & $\mathtt{00001110}$ & $\mathtt{00001111}$ & $\mathtt{00010001}$ & $\mathtt{00010011}$ \tabularnewline
$\mathtt{00010100}$ & $\mathtt{00010101}$ & $\mathtt{00010111}$ & $\mathtt{00011001}$ & $\mathtt{00011011}$ & $\mathtt{00011110}$ \tabularnewline
$\mathtt{00011111}$ & $\mathtt{00100001}$ & $\mathtt{00100011}$ & $\mathtt{00100101}$ & $\mathtt{00100110}$ & $\mathtt{00100111}$ \tabularnewline
$\mathtt{00101001}$ & $\mathtt{00101010}$ & $\mathtt{00101011}$ & $\mathtt{00101110}$ & $\mathtt{00101111}$ & $\mathtt{00110010}$ \tabularnewline
$\mathtt{00110011}$ & $\mathtt{00110100}$ & $\mathtt{00110101}$ & $\mathtt{00110110}$ & $\mathtt{00110111}$ & $\mathtt{00111001}$ \tabularnewline
$\mathtt{00111011}$ & $\mathtt{00111110}$ & $\mathtt{00111111}$ & $\mathtt{01000001}$ & $\mathtt{01000010}$ & $\mathtt{01000011}$ \tabularnewline
$\mathtt{01000100}$ & $\mathtt{01000101}$ & $\mathtt{01000110}$ & $\mathtt{01000111}$ & $\mathtt{01001001}$ & $\mathtt{01001011}$ \tabularnewline
$\mathtt{01001100}$ & $\mathtt{01001101}$ & $\mathtt{01001111}$ & $\mathtt{01010001}$ & $\mathtt{01010011}$ & $\mathtt{01010101}$ \tabularnewline
$\mathtt{01010110}$ & $\mathtt{01010111}$ & $\mathtt{01011001}$ & $\mathtt{01011010}$ & $\mathtt{01011011}$ & $\mathtt{01011101}$ \tabularnewline
$\mathtt{01011111}$ & $\mathtt{01100001}$ & $\mathtt{01100010}$ & $\mathtt{01100011}$ & $\mathtt{01100100}$ & $\mathtt{01100101}$ \tabularnewline
$\mathtt{01100110}$ & $\mathtt{01100111}$ & $\mathtt{01101001}$ & $\mathtt{01101011}$ & $\mathtt{01101101}$ & $\mathtt{01101110}$ \tabularnewline
$\mathtt{01101111}$ & $\mathtt{01110001}$ & $\mathtt{01110010}$ & $\mathtt{01110011}$ & $\mathtt{01110101}$ & $\mathtt{01110111}$ \tabularnewline
$\mathtt{01111001}$ & $\mathtt{01111010}$ & $\mathtt{01111011}$ & $\mathtt{01111101}$ & $\mathtt{01111111}$ &
\end{tabular}
\endgroup
\caption{The $89$ increment sequences in genus $8$ for which
condition $(*)_{\delta}$ holds.}
\label{tab:g8-increment-words}
\end{table}

\section{Affine Deligne-Lusztig varieties}\label{Section2}

In this section, we introduce the notions that will be used for the proof of Theorem \ref{thm:main}.

\subsection{The group theoretic setting}\label{subsec:local-siegel-datum}

We fix a free $\mathbb Z_p$-module $\Lambda_0$ of rank $2g$ with basis ordered as $u_1, \ldots , u_g, u_{-1}, \ldots , u_{-g}$. Unless mentioned otherwise, matrices of endomorphisms of $\Lambda_0$ and scalar extensions alike will be considered with respect to this ordered basis. We equip $\Lambda_0$ with the perfect alternating form $\psi$ determined by

\begin{equation*}
    \forall 1 \leq |i|, |j| \leq g, \qquad \psi(u_i,u_j) = \mathrm{sgn}(i)\delta_{i,-j}.  
\end{equation*}
We write $H \coloneq \Lambda_0 \otimes_{\mathbb Z_p} \mathbb Q_p$, equipped with the scalar extension of $\psi$. Let $\mathcal G = \mathrm{GSp}(\Lambda_0, \psi)$ be the reductive group scheme over $\mathbb Z_p$ defined, for every $\mathbb Z_p$-algebra $R$, by
\begin{equation*}
    \mathcal G(R) = \left\{ h \in \mathrm{GL}(\Lambda_0 \otimes_{\mathbb Z_p} R) \;\middle | \;
    \begin{array}{c}
        \text{there exists } c(h) \in R^{\times} \text{ such that}\\ \psi(hx, hy) = c(h) \psi(x,y) \text{ for all } x,y 
    \end{array} \right\}.
\end{equation*}
We write
\begin{equation*}
    G = \mathcal G_{\mathbb Q_p} = \mathrm{GSp}(H,\psi), \qquad c: G \longrightarrow \mathbb G_m,
\end{equation*}
for its generic fibre and its similitude character. Thus $G^{\mathrm{der}} = \mathrm{Sp}(H, \psi)$ and the root system of the derived group is of type $C_g$. Let us write $\mathcal O_L \coloneq W(\overline{\mathbb F_p})$ and $L \coloneq \mathrm{Frac}(\mathcal O_{L})$, so that $L$ is the completion of the maximal unramified extension of $\mathbb Q_p$. We still denote by $\sigma$ the lift of $x \mapsto x^p$ on $\mathcal O_L$ and on $L$. The standard lattice $\Lambda = \Lambda_0 \otimes_{\mathbb Z_p} \mathcal O_L$ is self-dual, and $K \coloneq \mathcal G(\mathcal O_L) \subset G(L)$ is the corresponding hyperspecial subgroup. Let $\mathcal T \subset \mathcal G$ be the diagonal maximal torus and put $T = \mathcal T_{\mathbb Q_p}$. Explicitly,
\begin{equation}\label{eq:diagonal-torus}
    \mathcal T(R) = \left\{ \mathrm{diag}(a_1, \ldots, a_g, ca_1^{-1}, \ldots, ca_g^{-1}) \mid a_1, \ldots, a_g, c\in R^{\times} \right\}.
\end{equation}
For $1 \leq i \leq g$, put
\begin{equation*}
    \mathcal F_i = \bigoplus_{j=1}^{i} \mathbb Z_p u_j, \qquad \mathcal F_i^- = \bigoplus_{j=1}^{i} \mathbb Z_p u_{-j}.
\end{equation*}
Let $\mathcal B, \mathcal B^{-} \subseteq \mathcal G$ be respectively the stabilizers of the two complete isotropic flags
\begin{equation*}
    \{0\} \subset \mathcal F_1 \subset \cdots \subset \mathcal F_g, \qquad \{0\} \subset \mathcal F_1^- \subset \cdots \subset \mathcal F_g^-.
\end{equation*}
Then $\mathcal B$ and $\mathcal B^-$ are opposite Borel subgroups with common maximal torus $\mathcal T$. We use the Iwahori subgroup
\begin{equation}\label{eq:Iwahori}
    I = \{h \in K \mid \overline h \in \mathcal B^-(\overline{\mathbb F}_p)\},
\end{equation}
where $\overline h$ is the reduction of $h$ modulo $p$. This convention agrees with \cite{shimada_supersingular_2025}. We say that two elements $b, b' \in G(L)$ are $K$-$\sigma$-conjugate if
$b' = h^{-1}b \sigma(h)$ for some $h \in K$. 

We next make the associated root datum explicit. Let $\chi_0 = c_{|_T}$ and, for $1 \leq i \leq g$, let $\chi_i$ be the character of $T$ which sends the element in \eqref{eq:diagonal-torus} to $a_i$. In $X^*(T)\otimes_{\mathbb Z}\mathbb R$, put $e_i = \chi_i - \frac{1}{2} \chi_0$. We denote by $\Phi$ the set of roots. With respect to $\mathcal B$, the positive roots are
\begin{equation*}
    \Phi^+ \coloneq \{e_i - e_j, e_i + e_j \mid 1 \leq i<j \leq g\} \cup \{2e_i \mid 1\leq i \leq g\},    
\end{equation*}
and the simple roots are
\begin{equation*}
    \forall 1 \leq i < g, \qquad \alpha_i \coloneq e_i - e_{i+1} \qquad \alpha_g \coloneq 2e_g.
\end{equation*}
The finite Weyl group $W_0 = N_G(T)(L) / T(L)$, where $N_G(T)$ denotes the normalizer of $T$ in $G$, is isomorphic to the group of \textit{signed permutations} on $g$ letters. By definition, a signed permutation is a bijection of $\{\pm 1, \ldots, \pm g\}$ satisfying $w(-i) = -w(i)$. It is determined by, and written as, the list $[w(1), \ldots, w(g)]$. Its action on the root system is $w(e_i) = \mathrm{sgn}(w(i)) e_{|w(i)|}$. The simple reflection $s_i$ interchanges $i$ and $i+1$ for $i<g$, while $s_g$ changes the sign of $g$. We write $S = \{s_1, \ldots, s_g\}$ for the set of simple reflections of $W_0$. Let $T_{\mathrm{ad}}$ denote the image of $T$ in $G_{\mathrm{ad}}$. We write $\mathscr A$ for the apartment associated with $T_{\mathrm{ad}}$ in the Bruhat-Tits building of $G_{\mathrm{ad}}(L)$. The hyperspecial vertex determined by $\Lambda$ provides an identification
\begin{equation*}
    \mathscr A \simeq X_*(T_{\mathrm{ad}}) \otimes_{\mathbb Z} \mathbb R \simeq \frac{X_*(T) \otimes_{\mathbb Z} \mathbb R}{X_*(Z(G)) \otimes_{\mathbb Z} \mathbb R}.
\end{equation*}
Since $G$ is split, its Iwahori-Weyl group is
\begin{equation*}
    \widetilde{W} = N_G(T)(L)/\mathcal T(\mathcal O_L) \simeq X_*(T) \rtimes W_0.
\end{equation*}
For $\lambda \in X_*(T)$, its image in $\widetilde{W}$ is denoted by $t^{\lambda}$. With this convention, $t^{\lambda} w$ acts on the apartment as $x \mapsto \overline{\lambda} + w(x)$, where $\overline{\lambda}$ is the image of $\lambda$ in $\mathscr A$. The choice of $I$ determines the set of simple reflections $\widetilde{S} = \{s_0, s_1, \ldots, s_g\}$ in the affine Weyl group $W_a \subset \widetilde{W}$, where $s_0$ is the reflection associated to the affine root $\alpha_0 \coloneq 1-2e_1$. For $J \subseteq \widetilde S$, we write $W_J$ for the subgroup of $W_a$ generated by $J$. We have a splitting $\widetilde{W} = W_a \rtimes \Omega$ where $\Omega$ is the stabilizer of the alcove $\mathfrak a \subset \mathscr A$ determined by $I$. The Bruhat order and the length function on $W_a$ are then naturally extended to $\widetilde{W}$. 

The local Hodge cocharacter used in the Shimura datum underlying the Siegel modular space $\mathcal A_g$ is
\begin{equation*}
    \mu: \mathbb G_m \longrightarrow G, \qquad \mu(z) = \mathrm{diag}(\underbrace{z, \ldots , z}_{g \text{ times}}, \underbrace{1, \ldots , 1}_{g \text{ times}}).
\end{equation*}
It has similitude factor $c \circ \mu(z) = z$, and its image $\overline{\mu}$ in $\mathscr A$ satisfies $\langle e_i, \overline{\mu} \rangle = 1/2$ for all $1 \leq i \leq g$. For later use, recall that
\begin{equation*}   
    \mathrm{Adm}(\mu) = \left\{ v \in \widetilde{W} \;\middle|\; v \leq t^{w\cdot \mu} \text{ for some } w \in W_0\right\}.
\end{equation*}
We shall use in particular
\begin{equation*}
    {}^S\mathrm{Adm}(\mu) = {}^S\widetilde{W} \cap \mathrm{Adm}(\mu),
\end{equation*}
where ${}^S\widetilde{W}$ is the set of minimal-length representatives for $W_0 \backslash \widetilde{W}$. Equivalently, it consists of the elements $v$ such that $\ell(sv) = \ell(v)+1$ for every $s\in S$. By \cite{heStratificationsReductionShimura2017} Definition 6.4, Remark 6.5(2), and Theorem 6.10, ${}^S\mathrm{Adm}(\mu)$ is the natural indexing set of the hyperspecial EKOR stratification, which agrees with the Ekedahl-Oort stratification.

Concretely, choose a symplectic trivialization of the Dieudonné lattice of $(A, \lambda) \in \mathcal A_g(\overline{\mathbb F}_p)$ and write its Frobenius as $F = b\sigma$. The index in ${}^S\mathrm{Adm}(\mu)$ of the hyperspecial EKOR stratum it belongs to is the unique $v \in{}^S\mathrm{Adm}(\mu)$ for which the $K$-$\sigma$-conjugacy class of $b$ meets $I \dot v I$. Here $\dot v \in N_G(T)(L)$ denotes a representative of $v$. We now identify this index explicitly in terms of the elementary sequence of Section \ref{Section1}. We introduce
\begin{equation*}
    \omega = [-g, -(g-1), \ldots, -1]\in W_0, \qquad \tau = t^{\mu} \omega \in \widetilde{W}.
\end{equation*}
A direct calculation gives $\tau^{-1}(\alpha_i) = \alpha_{g-i}$ for all $0 \leq i \leq g$. Hence $\tau$ stabilizes the base alcove and belongs to $\Omega$. Let $\varphi$ be an elementary sequence with increment sequence $\delta$. Using the permutation $\pi_{\delta}$ and the signs $\varepsilon_i$ introduced in \eqref{eq:signs}, define
\begin{equation*}
    \forall 1 \leq i \leq g, \qquad \widetilde{\pi}_{\delta}(i) = \varepsilon_i \pi_{\delta}(i) \qquad \widetilde{\pi}_{\delta}(-i) = -\widetilde{\pi}_{\delta}(i),
\end{equation*}
and put $v_{\delta} = t^{\mu} \widetilde{\pi}_{\delta} \in \widetilde{W}$.

\begin{proposition}\label{prop:elementary-affine-label}
The assignment
\begin{equation*}
    \{0, 1\}^g \xrightarrow{\sim} {}^S\mathrm{Adm}(\mu), \qquad \delta \longmapsto v_{\delta},
\end{equation*}
is a bijection. Under the hyperspecial EKOR parametrization, the stratum indexed by $v_{\delta}$ is precisely $\mathcal A_{g,\delta}$.
\end{proposition}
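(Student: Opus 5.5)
The proof has two parts: a combinatorial one, showing that $\delta \mapsto v_\delta$ lands in ${}^S\mathrm{Adm}(\mu)$ and is bijective, and a geometric one, showing that the standard module $M_\delta$ of \eqref{eq:normal-P}--\eqref{eq:normal-Z} lies in the EKOR stratum indexed by $v_\delta$.

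For the combinatorial part, I would use the standard description of ${}^S\mathrm{Adm}(\mu)$ for a minuscule $\mu$. By \cite{heStratificationsReductionShimura2017}, ${}^S\mathrm{Adm}(\mu) = \{ t^{\mu} y \mid y \in W_0, \ t^{\mu}y \in {}^S\widetilde W\}$. Equivalently, it is $\{x\,t^{\mu}\,y\}$ with $x \in W_0$ and $y$ in ${}^{J}W_0$, where $J$ is the type of the centralizer of $\mu$ (type $A_{g-1}$, generated by $s_1,\dots,s_{g-1}$). So ${}^S\mathrm{Adm}(\mu)$ has $|W_0/W_J| = 2^g$ elements, and it suffices to show that each $v_\delta$ is $S$-minimal and that the $v_\delta$ are pairwise distinct.

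Distinctness is immediate, since $\widetilde\pi_\delta$ determines $\delta$ through the signs $\varepsilon_i$. For minimality, I check $\ell(s_i v_\delta) > \ell(v_\delta)$ for each $s_i \in S$. Using the criterion $v = t^\mu y \in {}^S\widetilde W$ if and only if, for every positive root $\alpha$, $\langle \mu,\alpha\rangle = 0$ implies $y^{-1}\alpha > 0$, the condition only involves the roots $e_i - e_j$ with $i<j$. It therefore reduces to requiring $\widetilde\pi_\delta^{-1}(e_i - e_j) > 0$. Concretely:
\begin{itemize}
\item On $1,\dots,|P|$, the inverse is increasing with positive signs, because $\pi_\delta$ orders $P$ increasingly onto $\{1,\dots,|P|\}$.
\item On $|P|+1,\dots,g$, the inverse gives negative signs, with absolute values decreasing, because $Z$ is sent decreasingly onto $\{|P|+1,\dots,g\}$.
\item For mixed pairs, positive indices come before negative ones.
\end{itemize}
These are exactly the defining properties of $\pi_\delta$ in \eqref{eq:pi-definition}. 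The sign conventions (whether the condition reads $y\alpha$ or $y^{-1}\alpha$, and where the Iwahori $I$ sits relative to $\mathcal B^-$) need care, and I would fix them by checking $g=1$.

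For the geometric part, I would compute $b$ directly. Take the symplectic basis \eqref{eq:symplectic-basis} and lift $M_\delta$ to a Dieudonné lattice $\Lambda$ with Frobenius $F = b\sigma$, identifying $X_i \leftrightarrow u_i$ and $Y_i \leftrightarrow u_{-i}$ up to the sign needed to match $\psi$. Define the lift by $F X_i = X_{\pi(i)}$ for $i \in P$, $F X_i = Y_{\pi(i)}$ for $i\in Z$, and $F Y_i = p\cdot(\text{the corresponding vector})$ so that $V$ matches. Then $b = \dot{\widetilde\pi}_\delta$ up to sign, multiplied by the diagonal matrix $p^{\mu'}$ for some cocharacter $\mu'$ conjugate to $\mu$. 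Verifying that this lift is a principally quasi-polarized Dieudonné module whose reduction is $M_\delta$ is routine. The result is that $b \in K\, t^{\mu'} w K$ lies in a single $I$-double coset $I \dot v I$, and that double coset should be $v_\delta$, or a $\tau$-twisted version of it.

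The main obstacle is normalization: the choice of covariant versus contravariant Dieudonné theory, and of $\mathcal B^-$ in \eqref{eq:Iwahori}, can replace $v_\delta$ by $\tau^{\pm1}v_\delta\tau^{\mp1}$, by an inverse, or by $\sigma$-conjugates. I would first try a direct identification $b \in I\dot v_\delta I$ after reordering the basis, and if the result is off by $\omega$, conjugate by $\tau$ using $\tau^{-1}(\alpha_i)=\alpha_{g-i}$. Uniqueness of the index in ${}^S\mathrm{Adm}(\mu)$ then concludes. As a consistency check, the length $\ell(v_\delta)$ should equal $\sum_i \varphi_i$, matching \eqref{eq:EO-dimension} via He–Rapoport's dimension formula.
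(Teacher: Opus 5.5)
Your combinatorial half is sound and only mildly different from the paper. You count $|{}^S\mathrm{Adm}(\mu)| = |W_J\backslash W_0| = 2^g$ and check that the $2^g$ pairwise distinct $v_\delta$ are $S$-minimal. The paper instead proves ${}^S\mathrm{Adm}(\mu) = t^\mu({}^JW_0)$ and identifies ${}^JW_0$ with $\{\widetilde\pi_\delta\}$ through the positions of positive entries. Two small corrections. The set $\{x t^\mu y\}$ you write down is the whole double coset $W_0t^\mu W_0$; the count actually comes from the right cosets $W_0t^\mu y$ corresponding to the cosets $W_Jy$. Also, a $g=1$ test cannot decide between $y\alpha$ and $y^{-1}\alpha$, but your computation with $y^{-1}$ is the correct one.

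The genuine gap is in the geometric half. Up to the signs you leave open, your lift is the paper's $b_\delta = U_\delta C$ with $C=\mathrm{diag}(I_g,pI_g)$. Its image in $\widetilde W$ is $\widetilde\pi_\delta t^{w_0\mu} = t^{\widetilde\pi_\delta w_0\mu}\widetilde\pi_\delta$, where $w_0=-1$. This equals $v_\delta=t^\mu\widetilde\pi_\delta$ only when $P=\emptyset$, and in general it is not even in ${}^S\widetilde W$.

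The EKOR index is not read off from the $I$-double coset containing $b$. It is the unique $v\in{}^S\mathrm{Adm}(\mu)$ such that the $K$-$\sigma$-conjugacy class of $b$ meets $I\dot vI$. So you must produce $h\in K$ with $h^{-1}b\,\sigma(h)\in I\dot v_\delta I$, and your plan does not do this. Your fallback of conjugating by $\tau$ does not work. Since $\dot\tau\notin K$ (its similitude factor is $p$), this is not a $K$-$\sigma$-conjugation and need not preserve the EKOR index. Concretely, $\tau^{-1}(\widetilde\pi_\delta t^{w_0\mu})\tau = t^\mu\,\omega^{-1}\widetilde\pi_\delta\,\omega$, which in general is neither $v_\delta$ nor $S$-minimal.

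The missing step is the choice $h=RU_\delta$, where $R=\begin{pmatrix}0&-I_g\\ I_g&0\end{pmatrix}$ represents the central element $w_0$ of $W_0$. Since $R$ commutes with $U_\delta$, $\sigma(h)=h$ and $R^{-1}CR=\mu(p)$, one gets $h^{-1}b_\delta\,\sigma(h)=\mu(p)U_\delta$. This is an honest representative of $t^\mu\widetilde\pi_\delta=v_\delta$; in $\widetilde W$ it amounts to conjugating $\widetilde\pi_\delta t^{w_0\mu}$ by $w_0\widetilde\pi_\delta$. Together with $v_\delta\in{}^S\mathrm{Adm}(\mu)$ from your first half, uniqueness of the index then gives the second assertion. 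Your idea of ``reordering the basis'' is the right kind of move, since conjugation by an element of $\mathcal G(\mathbb Z_p)$ is a $K$-$\sigma$-conjugation. Without pinning down this specific element, however, the identification of the stratum is not proved.
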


\begin{proof}
Put $J = S\setminus \{s_g\}$, and let ${}^J W_0$ denote the minimal-length representatives for $W_J\backslash W_0$. We claim that
\begin{equation}\label{eq:affine-admissible-parametrization}
    {}^S\mathrm{Adm}(\mu) = {}^S\widetilde{W} \cap W_0 t^{\mu} W_0 = t^{\mu} ({}^J W_0).
\end{equation}
First, according to Corollary 3.12 of \cite{rapoport_guide_2005}, since $\mu$ is minuscule, we have $W_0 \mathrm{Adm}(\mu)W_0 = W_0 t^{\mu} W_0$. Using \cite{heStratificationsReductionShimura2017} Theorem 6.10, we get 
\begin{equation*}
    {}^S\mathrm{Adm}(\mu) = {}^S\widetilde{W} \cap \mathrm{Adm}(\mu) = {}^S\widetilde{W} \cap W_0 \mathrm{Adm}(\mu)W_0 = {}^S\widetilde{W} \cap W_0 t^{\mu} W_0,
\end{equation*}
which is the first equality of \eqref{eq:affine-admissible-parametrization}. For the second equality, let $w \in {}^J W_0$. We have
\begin{equation*}
    (t^{\mu} w)^{-1}(\alpha_i) = \begin{cases}
        w^{-1}(\alpha_i) & \text{if } 1 \leq i < g,\\
        1 + 2w^{-1}(e_g) & \text{if } i = g.    \end{cases}
\end{equation*}
Thus, $(t^{\mu} w)^{-1}(\alpha_i)$ is a positive affine root for every $1 \leq i \leq g$, implying that $t^{\mu}w \in {}^S\widetilde{W}$. It follows that $t^{\mu} ({}^J W_0) \subseteq {}^S\widetilde{W} \cap W_0 t^{\mu} W_0$. Conversely, take $v$ in the right-hand side, and write $v = a t^{\mu}b$ for some $a,b \in W_0$. Let $w \in {}^J W_0$ denote the minimal-length representative of the coset $W_J b$. Thus one may write $b = b' w$ for some $b' \in W_J$. Since $W_J = \mathrm{Stab}_{W_0}(\mu)$, we have $b't^{\mu} = t^{\mu} b'$. Therefore $v = ab' t^{\mu} w$ so that $W_0 v = W_0 t^{\mu} w$. But $v$ is the minimal-length representative of this coset since $v \in {}^S\widetilde{W}$, and so is $t^{\mu} w \in {}^S\widetilde{W}$ by the inclusion we already proved. It follows that $v = t^{\mu} w$, concluding the proof of \eqref{eq:affine-admissible-parametrization}.

For a signed permutation $w$, the conditions $w^{-1}(e_i-e_{i+1}) \in \Phi^+$ for $1 \leq i < g$ mean precisely that its positive values, read in increasing order of their positions, are $1,\ldots, d$, while its negative values are $-g, -(g-1), \ldots, -(d+1)$ for some $0 \leq d \leq g$. Thus an element of ${}^J W_0$ is uniquely determined by the positions of its positive values. By \eqref{eq:pi-definition}, these elements are exactly the $\widetilde{\pi}_{\delta}$ for various $\delta \in \{0, 1\}^{g}$, whose positive positions are given by $P_{\delta}$. Together with \eqref{eq:affine-admissible-parametrization}, this proves the bijection.

To check that the EO strata coincide, choose the symplectic monomial representative $U_{\delta} \in \mathcal G(\mathbb Z_p)$ of $\widetilde{\pi}_{\delta}$ determined by
\begin{equation*}
    \begin{array}{c|cc}
         & U_{\delta} u_i & U_{\delta} u_{-i}\\ \hline
        i \in P & u_{\pi_{\delta}(i)} & u_{-\pi_{\delta}(i)}\\
        i \in Z & u_{-\pi_{\delta}(i)} & -u_{\pi_{\delta}(i)}.
    \end{array}
\end{equation*}
Set $C = \mathrm{diag}(I_g, pI_g)$ and $b_{\delta} = U_{\delta} C$. The operators $F = b_{\delta} \sigma$ and $V = pF^{-1}$ define the structure of a polarized Dieudonné module on $\Lambda_0$. Its reduction modulo $p$ is $M_{\delta}$, under the transformation $u_i \mapsto X_i$ and $u_{-i} \mapsto Y_i$. Indeed, the formulas for $F$ and $V$ are exactly \eqref{eq:normal-P} and \eqref{eq:normal-Z}. Thus, the isomorphism class of $M_{\delta}$ is represented by the $K$-$\sigma$-conjugacy class of $b_{\delta}$ under the map of Remark 8.1 of \cite{viehmann_ekedahloort_2013}. Now consider
\begin{equation*}
    R = \begin{pmatrix}
        0 & -I_g\\
        I_g & 0
        \end{pmatrix}, \qquad h = R U_{\delta} \in K.
\end{equation*}
Since $R$ commutes with $U_{\delta}$, $R^{-1} C R = \mu(p)$, and $\sigma(h) = h$, one has
\begin{equation*}
    h^{-1} b_{\delta} \sigma(h) = \mu(p) U_{\delta}. 
\end{equation*}
But the image of $\mu(p) U_{\delta}$ in $N_G(T)(L)/T(\mathcal O_L) = \widetilde{W}$ is nothing but $v_{\delta}$. It follows that the index of the hyperspecial EKOR stratum corresponding to $\mathcal A_{g,\delta}$ is precisely $v_{\delta}$ as claimed.
\end{proof}

Finally, we compute the length of $v_{\delta}$. The positive roots sent to negative roots by $\widetilde{\pi}_{\delta}$ are precisely $2e_i$ with $i\in Z$, $e_i - e_j$ with $i < j$, $i \in Z$, $j \in P$, and $e_i + e_j$ with $i < j$, $i, j \in Z$. Thus each $i \in Z$ contributes one root $2e_i$ and exactly one root for each $j > i$, giving
\begin{equation}\label{eq:length_of_widetildepi}
    \ell(\widetilde{\pi}_{\delta}) = \sum_{i\in Z} (g+1-i).
\end{equation}
Consider the increment sequence $\delta^c \coloneq (1-\delta_1, \ldots, 1 - \delta_g)$, whose elementary sequence is given by $\varphi^c_i \coloneq i - \varphi_i$ for all $0 \leq i \leq g$. Direct computation shows that 
\begin{equation}\label{eq:definition_widetilde_pideltac}
    \widetilde{\pi}_{\delta^c} = \omega^{-1} \widetilde{\pi}_{\delta}, \text{ and } v_{\delta} = \tau \widetilde{\pi}_{\delta^c}.
\end{equation}
Since $\tau \in \Omega$, applying \eqref{eq:length_of_widetildepi} to $\delta^c$ gives
\begin{equation*}
    \ell(v_{\delta}) = \sum_{j\in P} (g+1-j) = \dim \mathcal A_{g, \delta},
\end{equation*}
agreeing with the formula \eqref{eq:EO-dimension}.

\subsection{Basic affine Deligne-Lusztig varieties and non-emptiness}

We now relate the intersection of an EO stratum with the supersingular locus to the non-emptiness of an affine Deligne-Lusztig variety. For $v \in \widetilde W$ and $b \in G(L)$, the affine Deligne-Lusztig variety attached to $I$ is given on $\overline{\mathbb F}_p$-points by
\begin{equation*}
    X_v(b) = \{hI \in G(L)/I \mid h^{-1}b\sigma(h) \in I \dot v I\},
\end{equation*}
where $\dot v \in N_G(T)(L)$ is any representative of $v$. Writing $[b]$ for the $G(L)$-$\sigma$-conjugacy class of $b$, the definition gives $X_v(b) \not = \emptyset \iff I \dot v I \cap [b] \not = \emptyset$. Recall that $[b]$ is called \textit{basic} if its Newton point is central, see for instance \cite{rapoport_guide_2005} Section 4. For $G = \mathrm{GSp}_{2g}$, this means that the isocrystal $(H \otimes_{\mathbb Q_p} L, b\sigma)$ is isoclinic. Under the identification $\pi_1(G) \simeq \mathbb Z$ induced by the similitude character, the Kottwitz homomorphism is
\begin{equation*}
    \kappa: G(L) \longrightarrow \mathbb Z, \qquad b \longmapsto v_p(c(b)),
\end{equation*}
where $v_p(p) = 1$. We use the same symbol for its induced map on $\widetilde W$. Basic $\sigma$-conjugacy classes for $G$ are uniquely determined by their Kottwitz invariant. Let $J_g$ denote the anti-diagonal identity matrix. The element $\tau = t^\mu \omega$ introduced above has the representative
\begin{equation}
    \dot{\tau} = \begin{pmatrix}
        0 & pJ_g\\
        -J_g & 0
    \end{pmatrix} \in G(\mathbb Q_p).
\end{equation}
It satisfies $c(\dot{\tau}) = p$, $\sigma(\dot{\tau}) = \dot{\tau}$, and $\dot{\tau}^2 = -pI_{2g}$. Thus its isocrystal has only slope $1/2$, and $[\dot{\tau}]$ is the unique basic class with Kottwitz invariant $1$. In particular, a principally polarized abelian variety has Newton class $[\dot{\tau}]$ precisely when the abelian variety is supersingular.

\begin{proposition}\label{prop:dictionary}
    Each of conditions \textup{(i)} and \textup{(ii)} of Theorem \ref{thm:main} is equivalent to
    \begin{equation*}
        X_{v_{\delta}}(\dot{\tau}) \not = \emptyset.
    \end{equation*}
\end{proposition}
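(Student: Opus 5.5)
Since (i) and (ii) are already known to be equivalent (via finite type and \cite{harashita_generic_2010} Proposition 5.3), it suffices to show that (i) holds if and only if $X_{v_{\delta}}(\dot{\tau}) \neq \emptyset$. By definition the latter means $I \dot v_{\delta} I \cap [\dot\tau] \neq \emptyset$. We translate both sides into statements about elements $b \in G(L)$ with $b\sigma$ a Frobenius on the standard lattice $\Lambda$.

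For (i) $\Rightarrow$ non-emptiness, take $X$ as in (i). Choose a symplectic trivialization of its covariant Dieudonné module identifying it with $(\Lambda, \psi)$ up to the similitude normalization. Then we write $F = b\sigma$ with $b \in G(L)$, $c(b) \in p\mathcal O_L^\times$, and $b \in K\mu(p)K$, the last because of the Hodge filtration of a $p$-divisible group of dimension $g$ and height $2g$. Since $X$ is supersingular, the isocrystal is isoclinic of slope $1/2$ with Kottwitz invariant $1$, so $b \in [\dot\tau]$. The polarized $\mathrm{BT}_1$-module $\mathbb D(X[p])$ is $M_\delta \otimes k$. By Proposition \ref{prop:elementary-affine-label} and the description of the EKOR index recalled before it, the $K$-$\sigma$-conjugacy class of $b$ meets $I\dot v_\delta I$. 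Any element of that intersection is $G(L)$-$\sigma$-conjugate to $b$, hence lies in $[\dot\tau]$, giving $X_{v_\delta}(\dot\tau)\neq\emptyset$.

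For the converse, take $b' \in I\dot v_\delta I \cap [\dot\tau]$. Since $v_\delta \in \mathrm{Adm}(\mu)$ and $\mu$ is minuscule, we get $b' \in I \dot v_\delta I \subseteq K\mu(p)K$ by \cite{rapoport_guide_2005} Corollary 3.12, as used in the proof of Proposition \ref{prop:elementary-affine-label}. Hence $\Lambda$ with $F = b'\sigma$ and $V = pF^{-1}$ satisfies $p\Lambda \subseteq F\Lambda \subseteq \Lambda$ and $c(b') \in p\mathcal O_L^\times$. After rescaling $\psi$ by a unit, which is possible over $\overline{\mathbb F}_p$ by Lang's theorem for $\mathbb G_m$, or simply absorbed into the trivialization, it is a principally quasi-polarized Dieudonné module. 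By Dieudonné theory over the perfect field $\overline{\mathbb F}_p$ it comes from a principally quasi-polarized $p$-divisible group $X$, which is supersingular because $b'\in[\dot\tau]$. The EKOR index of $X$ is $v_\delta$ because $b'\in I\dot v_\delta I$. By Proposition \ref{prop:elementary-affine-label} and the bijectivity of the EO parametrization (\cite{viehmann_ekedahloort_2013} Remark 8.1), $\mathbb D(X[p]) \simeq M_\delta\otimes k$. Finally we pass from $\overline{\mathbb F}_p$ to an arbitrary algebraically closed $k$ by base change, which preserves supersingularity and the $\mathrm{BT}_1$ type; conversely, a point over $k$ yields one over $\overline{\mathbb F}_p$ via (ii), since $\mathcal A_{g,\delta}\cap\mathcal S_g$ is of finite type.

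The main obstacle is the bookkeeping that the EKOR index is well defined: the $K$-$\sigma$-conjugacy class of $b$ meets $I\dot vI$ for exactly one $v\in{}^S\mathrm{Adm}(\mu)$, and this index depends only on the polarized $p$-torsion. I would cite \cite{heStratificationsReductionShimura2017} Theorem 6.10 together with the Viehmann–Wedhorn identification of $K$-$\sigma$-classes modulo $K_1$ with $\mathrm{BT}_1$ types. I would also check carefully the covariant-functor normalization, namely that the Frobenius lies in $K\mu(p)K$ rather than $K\mu(p)^{-1}pK$, consistent with the representative $b_\delta = U_\delta C$ used in the proof of Proposition \ref{prop:elementary-affine-label}.
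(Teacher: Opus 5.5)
Your proof is correct, but it takes a different route from the paper.

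The paper argues globally and lands on (ii). It passes to the level-$N$ cover $\mathcal A_{g,N}$ and quotes \cite{gortzFullyHodgeNewton2019} Remark 7.1 and Lemma 7.6. These say that, for Shimura varieties satisfying the He--Rapoport axioms (applicable here because $p>2$), an EKOR stratum meets a Newton stratum if and only if the corresponding affine Deligne--Lusztig variety is non-empty. Combined with Proposition \ref{prop:elementary-affine-label} and the fact that $\mathcal S_g$ is the Newton stratum of $[\dot\tau]$, this shows that (ii) holds if and only if $X_{v_\delta}(\dot\tau)\neq\emptyset$. Condition (i) is then reached only through the known equivalence between (i) and (ii).

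You instead argue locally and land on (i). Dieudonné theory over $\overline{\mathbb F}_p$ realizes every $b\in K\mu(p)K$, after rescaling $c(b)$ by a unit, as a principally quasi-polarized $p$-divisible group. The Newton class is the $G(L)$-$\sigma$-class of $b$. The EKOR index is a $K$-$\sigma$-invariant that determines, and is determined by, the polarized $\mathrm{BT}_1$-type.

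This is in effect a local version of the cited lemma: realizing every relevant $K$-$\sigma$-class comes from Dieudonné theory rather than from the global axioms. Your version is more self-contained and makes the inputs explicit, namely \cite{heStratificationsReductionShimura2017}, \cite{viehmann_ekedahloort_2013}, and the Kottwitz classification of basic classes. The cost is the normalization bookkeeping you correctly flag, plus the extra step of changing the algebraically closed field through (ii). The paper's version is a two-line citation, but it depends on the axiomatic framework and on level structures.

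One small simplification: you do not need \cite{rapoport_guide_2005} Corollary 3.12 to get $I\dot v_\delta I\subseteq K\mu(p)K$. The element $\mu(p)U_\delta$, with $U_\delta\in K$, is already a representative of $v_\delta$.
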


\begin{proof}
Choose an integer $N \geq 3$ prime to $p$ and work over $\overline{\mathbb F}_p$ on the finite étale surjective cover $\mathcal A_{g,N} \to \mathcal A_g$ with full symplectic level-$N$ structure. By \cite{gortzFullyHodgeNewton2019} Remark 7.1 and Lemma 7.6, the intersection of an EKOR stratum with a Newton stratum is non-empty if and only if the corresponding affine Deligne-Lusztig variety is non-empty. The hypotheses apply to the Siegel moduli space since $p > 2$. By Proposition \ref{prop:elementary-affine-label}, the EO stratum $\mathcal A_{g,N,\delta}$, pullback of $\mathcal A_{g,\delta}$ to $\mathcal A_{g,N}$, corresponds to the index $v_{\delta} \in {}^S\mathrm{Adm}(\mu)$, while the supersingular locus has Newton class $[\dot{\tau}]$. Consequently,
\begin{equation*}
    \mathcal A_{g,N,\delta} \cap \mathcal S_{g,N} \not = \emptyset \iff X_{v_{\delta}}(\dot{\tau}) \not = \emptyset.
\end{equation*}
Passing along the cover $\mathcal A_{g,N} \to \mathcal A_g$ proves the assertion.
\end{proof}

To decide the non-emptiness of $X_{v_{\delta}}(\dot{\tau})$, we use Proposition 5 of \cite{schremmerNewtonStrataLevi2024}. Before stating it, we need some notation. Every $v \in \widetilde W$ has a unique expression
\begin{equation}\label{eq:LP-factorization}
    v = a t^\lambda c, \qquad \lambda\ \text{is dominant}, \quad a, c \in W_0, \quad t^\lambda c \in {}^S\widetilde W.
\end{equation}
Let $\mathrm{pr}_0 : \widetilde W \to W_0$ denote the projection, so that $\mathrm{pr}_0(v) = ac$. Following \cite{shimada_supersingular_2025} Section 2.5, if $\chi^+: \Phi \to \{0,1\}$ is the indicator function of $\Phi^+$, the set of \textit{length-positive elements} of $v$ is
\begin{equation}\label{eq:LP-definition}
    \mathrm{LP}(v) = \left\{ w \in W_0 \;\middle|\; \langle w\alpha, c^{-1} \lambda \rangle + \chi^+(w\alpha) - \chi^+(acw\alpha) \geq 0 \text{ for every } \alpha \in \Phi^+ \right\}.
\end{equation}
For $x \in W_a$, let $\mathrm{supp}(x) \subseteq \widetilde S$ be the set of simple reflections occurring in a reduced expression of $x$. For an element of $W_0$, its support is similarly a subset of $S$. If $v = x \omega_0$ with $x \in W_a$ and $\omega_0 \in \Omega$, its \textit{$\sigma$-support} $\mathrm{supp}_\sigma(v)$ is the smallest subset of $\widetilde S$ containing $\mathrm{supp}(x)$ and stable under $\mathrm{Ad}(\omega_0)\circ\sigma$. Here $\sigma$ acts trivially on $\widetilde W$, since $G$ is split. The following Theorem was conjectured by Lim in \cite{lim_nonemptiness_2026}, and proved in full generality by Schremmer in \cite{schremmerNewtonStrataLevi2024} Proposition 5. The formulation we give is taken from \cite{shimada_supersingular_2025} Theorem 2.5.

\begin{theorem}\label{thm:schremmer}
    Let $v \in \widetilde W$, and let $b \in G(L)$ be basic with $\kappa(b) = \kappa(v)$. Then $X_v(b) = \emptyset$ if and only if both conditions below hold:
    \begin{equation*}
        |W_{\mathrm{supp}_\sigma(v)}| = \infty \qquad \text{and} \qquad \exists w \in \mathrm{LP}(v) \text{ such that } \mathrm{supp}\left(w^{-1} \mathrm{pr}_0(v) w \right) \subsetneq S.
    \end{equation*}
\end{theorem}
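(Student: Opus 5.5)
The statement is Proposition 5 of \cite{schremmerNewtonStrataLevi2024}, in the form of \cite{shimada_supersingular_2025} Theorem 2.5, and the paper only invokes it. If I had to prove it myself, I would derive it from two known non-emptiness results for basic $b$ and then translate their combinatorics into the language of length-positive elements. The two results are He's treatment of the finite $\sigma$-support case and the $P$-alcove criterion of Görtz, He and Nie. The proof splits according to whether $W_{\mathrm{supp}_\sigma(v)}$ is finite.

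\emph{Finite $\sigma$-support.} First I would show that if $|W_{\mathrm{supp}_\sigma(v)}| < \infty$, then $X_v(b) \neq \emptyset$ whenever $b$ is basic with $\kappa(b) = \kappa(v)$. Write $v = x\omega_0$ with $x \in W_a$ and $\omega_0 \in \Omega$, and set $J = \mathrm{supp}_\sigma(v)$. Then $v$ lies in $W_J\omega_0$, where $W_J$ is the finite Weyl group of a $\sigma$-stable parahoric $P_J \supseteq I$. By Lang's theorem applied to the reductive quotient of $P_J$, the double coset $I\dot v I$ meets the $\sigma$-conjugacy class of $\dot\omega_0$. That class is basic, and its Kottwitz invariant is $\kappa(v)$. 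Since basic classes are determined by $\kappa$, this gives non-emptiness, so the first condition is necessary for $X_v(b) = \emptyset$.

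\emph{Infinite $\sigma$-support.} Here I would use the Görtz--He--Nie theorem. It says that for such $v$ and basic $b$, $X_v(b) = \emptyset$ if and only if there exist $u \in W_0$ and a proper subset $J \subsetneq S$ satisfying two conditions. The first is that $v$ is a $(J,u,\sigma)$-alcove element: $u^{-1}v\sigma(u) \in \widetilde W_{M_J}$, and $v$ sends the $u$-conjugate of each root of $N_J$ to a positive affine root in the prescribed way. The second is that the Kottwitz invariant of $u^{-1}v\sigma(u)$ in $\pi_1(M_J)_\sigma$ is not the image of a basic class with invariant $\kappa(b)$. The remaining work is to show that this data exists exactly when some $w \in \mathrm{LP}(v)$ has $\mathrm{supp}(w^{-1}\mathrm{pr}_0(v)w) \subsetneq S$. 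I would take $w = u$ and $J = \mathrm{supp}(w^{-1}\mathrm{pr}_0(v)w)$, and argue in two steps.
\begin{itemize}
\item The inequality in \eqref{eq:LP-definition}, read through the factorization \eqref{eq:LP-factorization}, is exactly the statement that $v$ sends the affine root hyperplanes in the $w$-positive Weyl chamber to the positive side. This is the alcove condition on the roots of $N_J$, and it holds for all of $\Phi^+$ simultaneously.
\item The support condition places $w^{-1}vw$ in $\widetilde W_{M_J}$.
\end{itemize}

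The step I expect to be the main obstacle is the Kottwitz mismatch, since the statement of Theorem \ref{thm:schremmer} contains no explicit Kottwitz condition. I would try to show that when $v$ has infinite $\sigma$-support, the length-positivity condition forces $\kappa_{M_J}(w^{-1}vw)$ to differ from the basic $M_J$-invariant. The idea is to use the generic Newton point of $v$: length-positivity identifies that Newton point with the $M_J$-dominant image of $w^{-1}\lambda$. The generic Newton point is non-central because the $\sigma$-support is infinite. Hence the Hodge--Newton decomposability it gives for $M_J$ cannot be compatible with a basic $b$. Conversely, given a $(J,u,\sigma)$-alcove with Kottwitz mismatch, I would replace $u$ by the minimal element of the coset $uW_J$. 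I would then check that this element lies in $\mathrm{LP}(v)$, and that the support of the conjugated finite part shrinks to a proper subset because it is contained in $J$.
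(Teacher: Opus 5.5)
The paper does not prove this statement; it quotes it from Schremmer. Your outline therefore has to stand on its own.

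\textbf{What is fine.} The finite $\sigma$-support half is correct: $I\dot vI\subseteq P_J\dot\omega_0$, and Lang's theorem for the $\mathrm{Ad}(\dot\omega_0)\circ\sigma$-stable parahoric $P_J$ puts all of $I\dot vI$ into the basic class with invariant $\kappa(v)$. Reducing the infinite case to the G\"ortz--He--Nie $P$-alcove criterion is also the right framework.

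\textbf{The gap.} It is the step you flag yourself. The Kottwitz mismatch is precisely the difference between that criterion and the statement, and your argument for it does not work.
\begin{itemize}
\item Even granting that infinite $\sigma$-support makes the generic Newton point non-central, this does not stop the basic class from also meeting $I\dot vI$; the generic class is only the largest one. For instance, in $\mathrm{GL}_2$ any $x=t^{\mu}s_1$ with $\kappa(x)=1$ and $\ell(x)>0$ has infinite $\sigma$-support. Yet $x^2=t^{\mu+s_1\mu}$ is central, so $\dot x$ is itself basic and $X_x(b)\neq\emptyset$.
\item For a $(J,w)$-alcove element, the alcove structure only controls the $M_J$-central part of the Newton points in $I\dot vI$. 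This is the image $\nu_J$ of $\kappa_{M_J}(w^{-1}vw)$ in $X_*(Z(M_J))\otimes\mathbb Q$, and the obstruction is that $\nu_J$ is not $G$-central.
\item A non-central vector can have central projection to $X_*(Z(M_J))\otimes\mathbb Q$; take $(1,-1,0)$ with $J=\{s_1\}$ in $\mathrm{GL}_3$. So non-centrality of a Newton point never yields the mismatch.
\item Your identification of the generic Newton point with the $M_J$-dominant image of $w^{-1}\lambda$ is also not correct in general.
\end{itemize}

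\textbf{What is needed instead.} Write $E(\gamma)=\langle\gamma,c^{-1}\lambda\rangle+\chi^+(\gamma)-\chi^+(ac\gamma)$ as in \eqref{eq:LP-definition}, and put $u=w^{-1}\mathrm{pr}_0(v)w\in W_J$.
\begin{itemize}
\item Summing $E(w\beta)\geq 0$ over a $\langle u\rangle$-orbit in $\Phi^+\setminus\Phi_J$ makes the $\chi^+$-terms telescope. Averaging over $W_J$ then gives $\langle\beta,\nu_J\rangle\geq 0$ for all $\beta\in\Phi^+\setminus\Phi_J$.
\item Since $\kappa(v)=\kappa(b)$, the invariant $\kappa_{M_J}(w^{-1}vw)$ lies in $\kappa_{M_J}([b]\cap M_J(L))$ exactly when $\nu_J$ is central. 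In that case every inequality above is an equality, so $E(w\beta)=0$ for all $\beta\in\Phi^+\setminus\Phi_J$.
\item You would then have to show that this rigid situation forces $v\in W_K\omega_0$ with $W_K$ finite. That implication is the real content beyond G\"ortz--He--Nie, and it is where the infinite $\sigma$-support hypothesis must be used. Your sketch never uses that hypothesis in a valid way.
\end{itemize}

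\textbf{The paper's own case.} You can see this mechanism concretely in the proof of Proposition~\ref{prop:bridge}. In the Levi $\mathrm{GL}_k\times\mathrm{GSp}_{2(g-k)}$, the $\mathrm{GL}_k$-determinant of $w^{-1}v_\delta w$ has valuation $\frac12(k+\sum_{i\in S_0}t(i))=\frac12(k+|S_0\cap P|)$. The mismatch is therefore equivalent to $S_0\cap P\neq\emptyset$. The matched case $S_0\subseteq Z$ is exactly what the argument there excludes using $2r_0\leq g+1$.

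\textbf{A smaller point in the converse direction.} The minimal element of $uW_J$ is in general not length-positive, because length-positivity on $\Phi_J$ depends on the translation part of $u^{-1}vu$ inside $M_J$. Instead, take $u$ minimal, so that $u\Phi_J^+\subseteq\Phi^+$. Then replace it by $uz$, where $z\in W_J$ is length-positive for $u^{-1}vu$ in $\widetilde W_{M_J}$.
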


For $v_{\delta} = t^\mu \widetilde{\pi}_{\delta}$, the factorization \eqref{eq:LP-factorization} has $a = 1$, $\lambda = \mu$, and $c = \widetilde{\pi}_{\delta}$. In the next section, we compute both conditions of Theorem \ref{thm:schremmer} uniformly for every $v_{\delta}$, relating them to the cycle graph $\Gamma_{\delta}$.

\section{Proof of the main theorem}\label{Section3}

Throughout this section, we fix an elementary sequence $\varphi$ with increment sequence $\delta$. We write $r_0 \coloneq \min P$, with $r_0 = +\infty$ if $P = \emptyset$.

\subsection{The $\sigma$-support of $v_{\delta}$}

\begin{lemma}\label{lem:affine-support}
    If $P = \emptyset$, then $\mathrm{supp}_\sigma(v_{\delta}) = \emptyset$. Otherwise,
    \begin{equation*}
        \mathrm{supp}_\sigma(v_{\delta}) = \{s_{r_0}, \ldots, s_g\} \cup \{s_0, \ldots, s_{g-r_0}\}.
    \end{equation*}
    Consequently, $|W_{\mathrm{supp}_\sigma(v_{\delta})}| = \infty \iff P \not = \emptyset$ and $2r_0 \leq g+1$.
\end{lemma}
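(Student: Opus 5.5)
We use the factorization $v_\delta = \tau \widetilde{\pi}_{\delta^c}$ from \eqref{eq:definition_widetilde_pideltac}, so that $v_\delta = x\,\omega_0$ with $\omega_0 = \tau \in \Omega$ and $x = \tau\widetilde{\pi}_{\delta^c}\tau^{-1} \in W_a$. The $\sigma$-support is the smallest $\mathrm{Ad}(\tau)$-stable subset containing $\mathrm{supp}(x) = \mathrm{Ad}(\tau)(\mathrm{supp}(\widetilde{\pi}_{\delta^c}))$. Since $\sigma$ acts trivially and $\tau^{-1}(\alpha_i) = \alpha_{g-i}$, conjugation by $\tau$ acts on $\widetilde S$ by the involution $s_i \leftrightarrow s_{g-i}$. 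Hence
\begin{equation*}
\mathrm{supp}_\sigma(v_\delta) = \mathrm{supp}(\widetilde{\pi}_{\delta^c}) \cup \{ s_{g-i} \mid s_i \in \mathrm{supp}(\widetilde{\pi}_{\delta^c})\}.
\end{equation*}
The problem therefore reduces to computing the support of the finite signed permutation $\widetilde{\pi}_{\delta^c} \in W_0$.

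\textbf{Support of $\widetilde{\pi}_{\delta^c}$.} The negative positions of $\widetilde{\pi}_{\delta^c}$ are $Z_{\delta^c} = P_\delta = P$. We use the standard description of supports in type $C_g$: $s_i$ with $i<g$ lies in the support of a signed permutation $w$ if and only if $w$ does not stabilize $\{\pm1,\ldots,\pm i\}$, and $s_g$ lies in the support if and only if some value of $w$ is negative.

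\emph{Case $P = \emptyset$.} Then $\widetilde{\pi}_{\delta^c}$ has all values positive, increasing, so it is the identity and the support is empty.

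\emph{Case $P \neq \emptyset$.} For $i < r_0$, positions $1,\ldots,i$ lie in $Z_{\delta^c}$ complement, i.e.\ they have $\delta^c = 1$ and are sent to $\varphi^c_j = j$. Hence $\{\pm1,\ldots,\pm i\}$ is stabilized and $s_i \notin \mathrm{supp}$. Position $r_0$ is sent to a negative value, which breaks stability of $\{\pm1,\ldots,\pm i\}$ for every $i \geq r_0$: either $|\widetilde{\pi}_{\delta^c}(r_0)| > i$, or, if it is at most $i$, a counting argument shows some other position $\le i$ maps outside. More cleanly, a set $\{\pm1,\ldots,\pm i\}$ with $i \ge r_0$ stable under $w$ would force the negative values $-g,\ldots$ of $w$ to be assigned in decreasing fashion starting from $-g$, giving $g \le i$. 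Thus $\mathrm{supp}(\widetilde{\pi}_{\delta^c}) = \{s_{r_0},\ldots,s_g\}$. Applying the involution then yields the stated union $\{s_{r_0},\ldots,s_g\}\cup\{s_0,\ldots,s_{g-r_0}\}$.

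\textbf{Infiniteness.} A subset $J \subseteq \widetilde S$ generates a finite group if and only if $J \neq \widetilde S$, since the affine Weyl group is irreducible of type $\widetilde C_g$. The union equals $\widetilde S$ exactly when the two intervals cover $\{0,\ldots,g\}$, i.e.\ when $g - r_0 \geq r_0 - 1$, equivalently $2r_0 \le g+1$. This gives the final equivalence.

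The only delicate point is the support computation for $i \geq r_0$, in particular making the stability criterion rigorous. I would handle it via the reduced-word/inversion characterization: $s_i \in \mathrm{supp}(w)$ if and only if $w \notin W_{S\setminus\{s_i\}}$, which is the stabilizer of $\{\pm1,\ldots,\pm i\}$.
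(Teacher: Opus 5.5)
Your proof follows the paper's own argument. You factor $v_\delta=(\tau\widetilde\pi_{\delta^c}\tau^{-1})\,\tau$, compute $\mathrm{supp}(\widetilde\pi_{\delta^c})=\{s_{r_0},\ldots,s_g\}$, symmetrize under $\mathrm{Ad}(\tau)\colon s_i\mapsto s_{g-i}$, and use that proper subsets of $\widetilde S$ in type $\widetilde C_g$ generate finite groups. The conclusion is correct.

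One auxiliary claim is false and should be fixed. For $i<g$, $W_{S\setminus\{s_i\}}$ is the stabilizer of $\{1,\ldots,i\}$ (as in Lemma~\ref{lem:finite-parabolic}), not of $\{\pm1,\ldots,\pm i\}$. For example, $[-1,2,\ldots,g]=s_1\cdots s_{g-1}s_gs_{g-1}\cdots s_1$ stabilizes every $\{\pm1,\ldots,\pm i\}$ but has full support $S$. Your criterion is also inconsistent with the one you use for $s_g$, since the stabilizer of $\{\pm1,\ldots,\pm g\}$ is all of $W_0$. Only one direction is valid: if $w$ does not stabilize $\{\pm1,\ldots,\pm i\}$, then $s_i\in\mathrm{supp}(w)$.

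The argument survives this error:
\begin{itemize}
\item For $i<r_0$ you observed that $\widetilde\pi_{\delta^c}$ fixes $1,\ldots,i$ pointwise with positive values. Hence it preserves $\{1,\ldots,i\}$ and lies in $W_{S\setminus\{s_i\}}$ by the correct criterion.
\item For $r_0\le i<g$ you only need the valid direction. The ``counting argument'' is unnecessary: $\widetilde\pi_{\delta^c}(r_0)=-(g+1-\varphi_{r_0})=-g$, so $\{1,\ldots,i\}$, and even $\{\pm1,\ldots,\pm i\}$, is not preserved.
\end{itemize}
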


\begin{proof}
Recall the signed permutation $\widetilde{\pi}_{\delta^c} = \omega^{-1} \widetilde{\pi}_{\delta}$ introduced in \eqref{eq:definition_widetilde_pideltac}. We have $v_{\delta} = \tau \widetilde{\pi}_{\delta^c}$. Its entries are
\begin{equation*}
    \widetilde{\pi}_{\delta^c}(i) = \begin{cases}
        -(g+1-\varphi_i) & \text{if } i \in P,\\
        i-\varphi_i & \text{if } i \in Z.
    \end{cases}
\end{equation*}
If $P = \emptyset$, then $\widetilde{\pi}_{\delta^c} = 1$ and $v_{\delta} = \tau$, proving the first assertion. Suppose that $P \not = \emptyset$. The element $\widetilde{\pi}_{\delta^c}$ fixes $1, \ldots, r_0-1$ and preserves $\{\pm r_0, \ldots, \pm g\}$, so its support is contained in $\{s_{r_0}, \ldots, s_g\}$. Moreover, $\widetilde{\pi}_{\delta^c}(r_0) = -g$. For $r_0 \leq j < g$, every element of $W_{S\setminus\{s_j\}}$ preserves $\{1, \ldots, j\}$, whereas $\widetilde{\pi}_{\delta^c}$ does not. Also, $\widetilde{\pi}_{\delta^c} \not \in W_{S\setminus\{s_g\}}$, since it has a negative entry. Thus
\begin{equation*}
    \mathrm{supp}(\widetilde{\pi}_{\delta^c}) = \{s_{r_0}, \ldots, s_g\}.
\end{equation*}

Write $v_{\delta} = x_{\delta}\tau$, where $x_{\delta} = \tau \widetilde{\pi}_{\delta^c}\tau^{-1} \in W_a$. Since $\mathrm{Ad}(\tau)(s_i) = s_{g-i}$ and $\sigma$ acts trivially, the smallest $\mathrm{Ad}(\tau)$-stable subset containing $\mathrm{supp}(x_{\delta})$ is therefore the union $\{s_{r_0}, \ldots, s_g\} \cup \{s_0, \ldots, s_{g-r_0}\}$. This union is all of $\widetilde S$ exactly when $2r_0 \leq g+1$. Since every proper standard parabolic subgroup of the affine Weyl group of type $\widetilde C_g$ is finite, the equivalence stated in the Lemma follows.
\end{proof}

The complementary case has a simple interpretation in terms of bad cycles.

\begin{lemma}\label{lem:right-half}
If $W_{\mathrm{supp}_\sigma(v_{\delta})}$ is finite, then $\mathcal B_{\delta} = \emptyset$.
\end{lemma}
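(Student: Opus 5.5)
By Lemma \ref{lem:affine-support}, $W_{\mathrm{supp}_\sigma(v_{\delta})}$ is finite exactly when $P=\emptyset$ or $2r_0 > g+1$. If $P=\emptyset$, every cycle avoids $P$, so every cycle is good and $\mathcal B_\delta=\emptyset$ by Definition \ref{def:bad-cycle}. The plan is therefore to assume $P\neq\emptyset$ and $2r_0\geq g+2$, and to show that every cycle $C$ meeting $P$ has $\varepsilon(C)=-1$.

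The first step is to make the structure explicit. Since $\delta_i=0$ for $i<r_0$, we have $\{1,\ldots,r_0-1\}\subseteq Z$ and $\varphi_i=0$ for $i<r_0$, so $\pi_\delta(i)=g+1-i$ there. Moreover $|P|\leq g-r_0+1\leq r_0-1$. Hence the image $\pi_\delta(P)=\{1,\ldots,|P|\}$ lies inside $\{1,\ldots,r_0-1\}\subseteq Z$. Also $\pi_\delta(\{1,\ldots,r_0-1\})=\{g+2-r_0,\ldots,g\}$, and since $g+2-r_0\leq r_0$, this set is contained in $\{r_0,\ldots,g\}$.

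The second step is to follow a cycle $C$ starting from some $i\in C\cap P$. Then $j=\pi_\delta(i)=\varphi_i\leq |P|\leq r_0-1$ lies in $Z$, and $\pi_\delta(j)=g+1-\varphi_i\geq r_0$. I want to argue that between two consecutive visits to $P$, the cycle passes through an odd number of elements of $Z$; equivalently, that every $a_k$ in the schema $PZ^{a_1}\cdots PZ^{a_r}$ is odd.

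If $a_k$ is odd for every $k$, we conclude in either of two ways. If $r$ is odd, then $\sum a_k$ is odd, so $\varepsilon(C)=-1$ and $C$ is good. If $r$ is even, then $\varepsilon(C)=1$ but no orientation is positive on all of $C\cap P$, since that requires every $a_k$ to be even. In both cases $C$ is good, which is all we need. In fact it suffices to show that at least one $a_k$ is odd, because a bad cycle requires all $a_k$ to be even.

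The third step, and the main obstacle, is controlling the $Z$-elements that lie in $\{r_0,\ldots,g\}$. Starting from $i\in P$, the cycle goes to $j=\varphi_i\in Z$ with $j<r_0$, then to $j'=g+1-\varphi_i\geq r_0$. From here the cycle may wander through $Z\cap\{r_0,\ldots,g\}$ before it returns to $P$. Those elements map to $g+1-(m-\varphi_m)$, which can again land either below $r_0$ or above it.

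My first attempt is a parity invariant. The map $m\mapsto g+1-(m-\varphi_m)$ on $Z$ is order-reversing, and the elements below $r_0$ are all in $Z$. I would try to show that each $Z$-step from the region $\{1,\ldots,r_0-1\}$ goes to $\{r_0,\ldots,g\}$, while $Z$-steps from the high region may go anywhere. If the $Z$-steps alternate in a controlled way, the count of $Z$-elements between consecutive $P$'s is forced to be odd.

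If this parity argument fails, I would fall back on the simplest version of the claim: it suffices to find a single $k$ with $a_k$ odd. The cleanest candidate is the transition $P\to Z_{\mathrm{low}}\to Z_{\mathrm{high}}$. I would check whether $\pi_\delta(g+1-\varphi_i)$ lies in $P$ for a suitably chosen $i\in C\cap P$, for instance the maximal element of $C\cap P$. If it does, that gap has $a_k=2$, which does not help. In that case I would instead use a counting argument with $|P|\leq r_0-1$ to exhibit a gap of length $1$ or $3$.
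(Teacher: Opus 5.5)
\textbf{There is a genuine gap.} Your second step is sound: if some gap $a_k$ in the schema $PZ^{a_1}\cdots PZ^{a_r}$ of a cycle is odd, the cycle is not bad. However, the proposal never proves that such a gap exists; the third step is only a list of strategies. The structure you set up in the first step, on which your parity heuristic rests, is also partly wrong. Since $2r_0 \geq g+2$ means $g+2-r_0 \leq r_0$, the set $\pi_{\delta}(\{1,\ldots,r_0-1\}) = \{g+2-r_0,\ldots,g\}$ \emph{contains} $\{r_0,\ldots,g\}$ rather than being contained in it. So elements below $r_0$ can be sent below $r_0$: for $g=5$ and $\delta=\mathtt{00001}$ one has $r_0=5$ and $\pi_{\delta}(2)=4$. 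Conversely, your assertion that $Z$-elements of $\{r_0,\ldots,g\}$ ``may go anywhere'' misses exactly the fact that makes the argument work: $\pi_{\delta}$ sends every element of $\{r_0,\ldots,g\}$ below $r_0$. So the alternation you hope for is never established, and in the form you state it (low always goes to high) it is false.

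The missing idea is a two-block swap obtained from bijectivity. The paper puts $h=\lfloor g/2\rfloor$, $L_0=\{1,\ldots,h\}$ and $R_0=\{g-h+1,\ldots,g\}$. The hypothesis $2r_0>g+1$ gives $P\subseteq R_0$, and every integer below $R_0$ lies in $Z$ with $\varphi_i=0$. Hence $\pi_{\delta}(i)=g+1-i$ on $L_0$, so $\pi_{\delta}(L_0)=R_0$, and $h+1$ is fixed when $g$ is odd. Since $\pi_{\delta}$ is a bijection, $\pi_{\delta}(R_0)=L_0$. Thus every cycle meeting $P$ alternates between $L_0$ and $R_0$. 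Between two consecutive elements of $P$ (both in $R_0$) it passes through an odd number of elements, all in $Z$, so every $a_k$ is odd and your second step concludes.

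Your cut at $r_0$ can be repaired in the same way. The band $\{g+2-r_0,\ldots,r_0-1\}$ consists of elements of $Z$ with $\varphi_i=0$, on which $\pi_{\delta}$ is the involution $i\mapsto g+1-i$. So the band is a union of cycles avoiding $P$, and $\pi_{\delta}$ exchanges $\{1,\ldots,g+1-r_0\}$ and $\{r_0,\ldots,g\}$.
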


\begin{proof}
By Lemma \ref{lem:affine-support}, our hypothesis says that $P = \emptyset$ or $2r_0 > g+1$. In the former case the assertion is immediate, so let us assume the latter. We write
$h \coloneq \lfloor g/2 \rfloor$ and
\begin{equation*}
    L_0 = \{1, \ldots, h\}, \qquad R_0 = \{g-h+1, \ldots, g\}.
\end{equation*}
Note that $L_0$ and $R_0$ are disjoint, and that their union is the whole of $\{1, \ldots , g\}$ if $g$ is even, and misses only $h+1$ if $g$ is odd. Our assumption implies that $P \subseteq R_0$. All integers preceding $R_0$ belong to $Z$, so \eqref{eq:pi-definition} gives $\pi_{\delta}(i) = g+1-i$ for $i\in L_0$. If $g$ is odd, then $h+1$ is fixed by $\pi_{\delta}$. Since $\pi_{\delta}$ is a permutation, it follows that
\begin{equation*}
    \pi_{\delta}(L_0) = R_0, \qquad \pi_{\delta}(R_0) = L_0.
\end{equation*}
Every cycle of $\pi_{\delta}$ meeting $P$ therefore alternates between $L_0$ and $R_0$. Between two consecutive elements of $P$ in such a cycle, the number of elements of $Z$ is odd. The definition \eqref{eq:orientation} then prevents any orientation on such a cycle $C$ from being positive at every value of $C \cap P$. Thus no cycle is bad.
\end{proof}

\subsection{Length-positive elements and the cycle graph}

We now consider the second condition of Theorem \ref{thm:schremmer}. For $w \in W_0$, write
\begin{equation*}
    w^{-1} = [\xi_1, \ldots, \xi_g].
\end{equation*}

\begin{lemma}\label{lem:LP-explicit}
One has $w \in \mathrm{LP}(v_{\delta})$ if and only if
    \begin{align*}
        \forall i \in P, & & & \xi_i > 0, \\
        \forall i \in P, \forall j \in Z, & & & (i < j \text{ and } \xi_j > 0) \implies \xi_i < \xi_j.
    \end{align*}
\end{lemma}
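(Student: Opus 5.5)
The plan is to unwind definition \eqref{eq:LP-definition} for $v_{\delta}$ directly. By the remark after Theorem \ref{thm:schremmer}, the factorization \eqref{eq:LP-factorization} of $v_{\delta}$ has $a = 1$, $\lambda = \mu$ and $c = \widetilde{\pi}_{\delta}$. Writing $c$ for $\widetilde{\pi}_{\delta}$, the condition for $w$ becomes
\begin{equation*}
    \langle c\beta, \mu \rangle + \chi^+(\beta) - \chi^+(c\beta) \geq 0 \qquad \text{for every } \beta \in w(\Phi^+).
\end{equation*}
Here I substitute $\beta = w\alpha$ and use $\langle \beta, c^{-1}\mu\rangle = \langle c\beta, \mu\rangle$.

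\textbf{Step 1: list the failing roots.} Since $\langle e_i, \mu\rangle = 1/2$ for all $i$, the pairing $\langle \gamma, \mu\rangle$ equals $0$ on the roots $\pm(e_a - e_b)$. It equals $+1$ on the roots $e_a + e_b$ and $2e_a$, and $-1$ on their negatives.
\begin{itemize}
    \item If $\langle c\beta, \mu \rangle = 1$, then $c\beta$ is positive and the expression equals $\chi^+(\beta) \geq 0$.
    \item If $\langle c\beta,\mu\rangle = -1$, then $c\beta$ is negative and the expression fails exactly when $\beta < 0$.
    \item If $\langle c\beta,\mu\rangle = 0$, the expression fails exactly when $\beta<0$ and $c\beta>0$.
\end{itemize}
Put $\gamma = -\beta$. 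Then $w \in \mathrm{LP}(v_{\delta})$ if and only if there is no positive root $\gamma$ with $w^{-1}\gamma < 0$ such that $c\gamma$ is either a positive long-type root ($e_a+e_b$ or $2e_a$) or a negative root of the form $e_b - e_a$ with $a<b$.

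\textbf{Step 2: case analysis on $\gamma$.} I use $c(e_i) = \varepsilon_i e_{\pi_{\delta}(i)}$. I also use the facts from \eqref{eq:pi-definition} that $\pi_{\delta}$ is increasing on $P$, decreasing on $Z$, and that $\pi_{\delta}(P) < \pi_{\delta}(Z)$ elementwise.
\begin{itemize}
    \item For $\gamma = 2e_i$, the root $c\gamma = 2\varepsilon_i e_{\pi_{\delta}(i)}$ is forbidden exactly when $i \in P$. Since $w^{-1}(2e_i) = 2\,\mathrm{sgn}(\xi_i) e_{|\xi_i|}$, this yields the first condition $\xi_i>0$ for $i \in P$.
    \item For $\gamma = e_i - e_j$ with $i<j$:
    \begin{itemize}
        \item If $i,j \in P$, or $i,j\in Z$, then $c\gamma$ is a positive difference, so there is no constraint.
        \item If $i \in Z$ and $j\in P$, then $c\gamma$ is a negative sum, so there is no constraint.
        \item If $i\in P$ and $j \in Z$, then $c\gamma = e_{\pi_{\delta}(i)} + e_{\pi_{\delta}(j)}$ is forbidden. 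We need $w^{-1}\gamma = \xi$-signed $e_{|\xi_i|} - \mathrm{sgn}(\xi_j)e_{|\xi_j|}$ to be positive. Knowing $\xi_i>0$, this root is negative precisely when $\xi_j>0$ and $\xi_i>\xi_j$. This yields the second condition.
    \end{itemize}
    \item For $\gamma = e_i + e_j$:
    \begin{itemize}
        \item If both indices are in $P$, then $c\gamma$ is a positive sum. But $w^{-1}\gamma$ is automatically positive once $\xi_i,\xi_j>0$, so there is no new constraint.
        \item If the indices are mixed, then $c\gamma = \pm(e_{\pi_{\delta}(i)} - e_{\pi_{\delta}(j)})$ with the $P$-index having the smaller $\pi$-value. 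So $c\gamma$ is a positive difference, and there is no constraint.
        \item If both indices are in $Z$, then $c\gamma$ is a negative sum, and there is no constraint.
    \end{itemize}
\end{itemize}

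\textbf{Step 3: combine.} Since the first condition is forced by the roots $2e_i$, it may be used while treating the roots $e_i \pm e_j$. Combining the cases gives exactly the two stated conditions, in both directions of the equivalence.

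The only delicate point is the sign and ordering bookkeeping in Step 2. This mainly means checking that every mixed $P$/$Z$ case lands on a harmless root, which is where $\pi_{\delta}(P) < \pi_{\delta}(Z)$ is essential. I would organize that step as a small table over the cases ($i,j \in P$ or $Z$; root type $e_i\pm e_j$ or $2e_i$) to avoid sign slips.
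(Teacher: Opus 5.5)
Your proof is correct and follows essentially the same route as the paper, which sets $E_\delta(\beta)=\langle\beta,\widetilde{\pi}_\delta^{-1}\mu\rangle+\chi^+(\beta)-\chi^+(\widetilde{\pi}_\delta\beta)$ and computes it in closed form on $\Phi^+$ (values $\delta_i$, $\delta_i(1-\delta_j)$, $\delta_i\delta_j$). Using $E_\delta(-\beta)=-E_\delta(\beta)$, the paper then reduces to positivity of $w^{-1}\beta$ on the roots with $E_\delta(\beta)=1$; these are exactly your forbidden roots $2e_i$ ($i\in P$), $e_i+e_j$ ($i,j\in P$) and $e_i-e_j$ ($i<j$, $i\in P$, $j\in Z$), and the final analysis matches yours. (Minor wording: $e_a+e_b$ is a short root in type $C_g$, so ``long-type'' is a misnomer, though your parenthetical makes the intended meaning clear.)
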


\begin{proof}
For $\beta \in \Phi$, we write
\begin{equation}\label{eq:E-beta}
    E_{\delta}(\beta) \coloneq \langle \beta, \widetilde{\pi}_{\delta}^{-1} \mu \rangle + \chi^+(\beta) - \chi^+(\widetilde{\pi}_{\delta}\beta).
\end{equation}
The coordinates of $\widetilde{\pi}_{\delta}^{-1} \mu$ in the apartment $\mathscr A$ are $(\varepsilon_1/2, \ldots, \varepsilon_g/2)$. For $\beta\in\Phi^+$, the last two terms of \eqref{eq:E-beta} give $1$ if $\widetilde{\pi}_{\delta} \beta$ is negative, and $0$ otherwise. Recall that $\pi_{\delta}$ is increasing on $P$, decreasing on $Z$, and that every value on $P$ is smaller than every value on $Z$. Consequently, $\widetilde{\pi}_{\delta}(2e_i)$ is negative exactly when $i\in Z$. For $i<j$, the image of $e_i - e_j$ is negative exactly when $i \in Z$ and $j \in P$, whereas the image of $e_i+e_j$ is negative exactly when $i, j \in Z$. Using the coordinates of $\widetilde{\pi}_{\delta}^{-1} \mu$ and the identity $\varepsilon_i = 2\delta_i-1$, we therefore obtain
\begin{align*}
    E_{\delta}(2e_i) & = \varepsilon_i+(1-\delta_i) = \delta_i,\\
    E_{\delta}(e_i-e_j) & = \frac{\varepsilon_i - \varepsilon_j}{2} + (1-\delta_i)\delta_j = \delta_i (1-\delta_j),\\
    E_{\delta}(e_i+e_j) & = \frac{\varepsilon_i + \varepsilon_j}{2} + (1-\delta_i)(1-\delta_j) = \delta_i\delta_j.
\end{align*}
Here $1 \leq i\leq g$ in the first line, and $1\leq i < j \leq g$ in the last two. In particular, $E_{\delta}$ takes only the values $0$ and $1$ on $\Phi^+$. By definition, we have $w \in \mathrm{LP}(v_{\delta})$ if and only if $E_{\delta}(w\alpha) \geq 0$ for every $\alpha \in \Phi^+$. Since $E_{\delta}(-\beta) = -E_{\delta}(\beta)$, this is equivalent to requiring that $w^{-1}(\beta) \in \Phi^+$ for every positive root $\beta$ with $E_{\delta}(\beta) = 1$. These roots are precisely
\begin{equation*}
    \begin{array}{ll}
        2e_i & \text{for } i\in P,\\
        e_i + e_j & \text{for } i<j \text{ with } i,j \in P,\\
        e_i - e_j & \text{for } i<j, \text{ with } i \in P \text{ and } j \in Z.
    \end{array}
\end{equation*}
The first family gives that $\xi_i > 0$ for all $i \in P$, which also ensures positivity for the second family. For the third family, if $\xi_j < 0$, then $w^{-1}(e_i-e_j) = e_{\xi_i} + e_{|\xi_j|}$ is positive. If $\xi_j > 0$, its positivity is equivalent to $\xi_i < \xi_j$. This can be summed up by the implication $(i < j \text{ and } \xi_j > 0) \implies \xi_i < \xi_j$. 
\end{proof}

We also recall the following description of elements $w \in W_0$ whose support is proper.

\begin{lemma}\label{lem:finite-parabolic}
    For $w \in W_0$, one has $\mathrm{supp}(w) \subsetneq S$ if and only if $\{1, \ldots, k\}$ is preserved by $w$ for some $1 \leq k \leq g$.
\end{lemma}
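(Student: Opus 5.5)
The plan is to use the standard description of the finite parabolic subgroups $W_{S\setminus\{s_k\}}$ of the hyperoctahedral group $W_0$. If $\mathrm{supp}(w) \subsetneq S$, then $\mathrm{supp}(w) \subseteq S \setminus \{s_k\}$ for some $1 \leq k \leq g$, so $w \in W_{S\setminus\{s_k\}}$. Conversely, if $w$ preserves $\{1,\ldots,k\}$, we want to show $w \in W_{S\setminus\{s_k\}}$, which has proper support. So the statement reduces to the identity
\begin{equation*}
    W_{S\setminus\{s_k\}} = \{ w \in W_0 \mid w(\{1,\ldots,k\}) = \{1,\ldots,k\}\}, \qquad 1 \leq k \leq g.
\end{equation*}
This identity needs a little care at $k = g$, where $s_g$ must be the missing reflection.

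For the inclusion ``$\subseteq$'', check the generators. Each $s_i$ with $i \neq k$ preserves $\{1,\ldots,k\}$: for $i<k$, $s_i$ swaps two letters both $\leq k$. For $k<i<g$, $s_i$ swaps two letters both $>k$. For $i=g>k$, $s_g$ only changes the sign of $g \notin \{1,\ldots,k\}$. For $k=g$, the generators $s_1,\ldots,s_{g-1}$ are honest permutations, so they preserve $\{1,\ldots,g\}$ as a set of positive letters. Since the stabilizer of a set is a subgroup, it contains $W_{S\setminus\{s_k\}}$. Here ``preserved'' is understood inside $\{\pm1,\ldots,\pm g\}$, so sign changes on letters $\le k$ are not allowed.

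For the inclusion ``$\supseteq$'', let $w$ preserve $\{1,\ldots,k\}$. Then $w$ restricts to an ordinary permutation of $\{1,\ldots,k\}$. Because $w(-i)=-w(i)$, it also preserves $\{\pm(k+1),\ldots,\pm g\}$ and acts there as a signed permutation of those $g-k$ letters. This gives a decomposition $w = w_1 w_2$. The factor $w_1$ is in $\mathfrak S_k = \langle s_1,\ldots,s_{k-1}\rangle$. The factor $w_2$ lies in the hyperoctahedral group on $\{k+1,\ldots,g\}$, which is generated by $s_{k+1},\ldots,s_{g-1},s_g$; this is type $C_{g-k}$ with its standard Coxeter generators. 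Hence $w \in W_{S\setminus\{s_k\}}$. When $k=g$ the second factor is trivial and $w \in \mathfrak S_g = W_{S\setminus\{s_g\}}$.

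Finally, recall that the support of $w$ does not depend on the chosen reduced expression, by Tits' word property or Matsumoto's theorem. Moreover, $\mathrm{supp}(w) \subseteq J$ if and only if $w \in W_J$, because a reduced expression of an element of $W_J$ can be taken inside $J$, a standard property of parabolic subgroups. Combining this with the identity above gives the lemma. The only point needing any attention is the generation of the type $C_{g-k}$ hyperoctahedral group by $s_{k+1},\ldots,s_g$, which is standard. I therefore expect no real obstacle, only bookkeeping at the boundary cases $k=g$ and $k=g-1$.
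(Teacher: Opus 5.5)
Your proposal is correct and follows essentially the same route as the paper: both identify the signed permutations preserving $\{1,\ldots,k\}$ with the maximal standard parabolic subgroup $W_{S\setminus\{s_k\}}$, and both note that every proper support lies in some $S\setminus\{s_k\}$. You simply spell out the generator checks, the decomposition $\mathfrak S_k \times W(C_{g-k})$, and the independence of the support from the chosen reduced expression, all of which the paper leaves implicit.
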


\begin{proof}
The signed permutations preserving $\{1, \ldots, k\}$ are exactly the elements of $W_{S\setminus\{s_k\}}$. Indeed, they permute the first $k$ integers without changing signs and act by an arbitrary signed permutation on the remaining integers. Every proper standard parabolic subgroup is contained in one of these maximal standard parabolic subgroups.
\end{proof}

The next proposition translates the second condition of Theorem \ref{thm:schremmer} into the condition of Lemma \ref{lem:no-incoming}, assuming that $W_{\mathrm{supp}_\sigma(v_{\delta})}$ is infinite.

\begin{proposition}\label{prop:bridge}
    Suppose that $W_{\mathrm{supp}_\sigma(v_{\delta})}$ is infinite. There exists $w \in \mathrm{LP}(v_{\delta})$ such that $\mathrm{supp}(w^{-1}\widetilde{\pi}_{\delta}w) \subsetneq S$ if and only if there is a non-empty subset $\mathcal U \subseteq \mathcal B_{\delta}$ such that for every arrow $C \longrightarrow C'$ in $\Gamma_{\delta}$, we have $(C' \in \mathcal U \implies C \in \mathcal U)$.
\end{proposition}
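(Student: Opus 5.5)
The plan is to make both sides concrete through the signed set $w(\{1,\ldots,k\})$. By Lemma \ref{lem:finite-parabolic}, $\mathrm{supp}(w^{-1}\widetilde{\pi}_{\delta}w) \subsetneq S$ if and only if $w^{-1}\widetilde{\pi}_{\delta}w$ preserves $\{1,\ldots,k\}$ for some $1 \leq k \leq g$. This is equivalent to $\widetilde{\pi}_{\delta}$ preserving
\begin{equation*}
A_k := w(\{1,\ldots,k\}) = \{\operatorname{sgn}(\xi_i)\, i \mid |\xi_i| \leq k\}.
\end{equation*}
Put $U = \{i \mid |\xi_i| \leq k\}$ and $s(i) = \operatorname{sgn}(\xi_i)$ for $i \in U$. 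Then $\widetilde{\pi}_{\delta}(A_k) = A_k$ means two things. First, $U$ is a union of cycles of $\pi_{\delta}$. Second, $s(\pi_{\delta}(i)) = \varepsilon_i s(i)$ on $U$. So every cycle $C \subseteq U$ has $\varepsilon(C) = 1$, and $s|_C$ is an orientation of $C$. All of the argument is a translation of Lemma \ref{lem:LP-explicit} through this dictionary.

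\emph{($\Leftarrow$)} Let $\mathcal U$ be given, let $U$ be the union of its cycles, set $s|_C = t_C$ for $C \in \mathcal U$, and put $k = |U| \geq 1$. I define $w$ through $w^{-1}$ as follows.
\begin{itemize}
\item For $i \in U$, set $\xi_i = s(i)\cdot(\text{rank of } i \text{ in } U)$, where ranks are taken in increasing order.
\item For $i \notin U$, set $\xi_i = k + (\text{rank of } i \text{ in } \{1,\ldots,g\}\setminus U)$.
\end{itemize}
This is a signed permutation, and $\widetilde{\pi}_{\delta}$ preserves $A_k$ because $s$ is an orientation on each cycle of $U$. We then check Lemma \ref{lem:LP-explicit}.
\begin{itemize}
\item $\xi_i > 0$ for $i \in P$: this holds since bad orientations are $+1$ on $P$, and values off $U$ are positive.
\item Take $i \in P$, $j \in Z$, $i < j$, $\xi_j > 0$. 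If $i$ and $j$ are both in $U$, or both outside $U$, then $\xi_i < \xi_j$ because ranks are monotone in the index. If $i \in U$ and $j \notin U$, then $\xi_i \leq k < \xi_j$.
\item The only remaining case is $i \notin U$ and $j \in U$ with $t_{C'}(j) = 1$, where $C'$ is the cycle of $j$. This case is an arrow from the cycle of $i$ to $C' \in \mathcal U$ with tail outside $\mathcal U$. The two cycles are distinct since $i \notin U$, so the hypothesis on $\mathcal U$ excludes it.
\end{itemize}

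\emph{($\Rightarrow$)} Given such $w$ and $k$, every cycle $C \subseteq U$ meeting $P$ satisfies $s = +1$ on $C \cap P$, by the first condition of Lemma \ref{lem:LP-explicit}. Hence $C$ is bad and $s|_C = t_C$. Take $\mathcal U$ to be the set of bad cycles contained in $U$. To check closure under arrows, let $C \to C'$ with $C' \in \mathcal U$, witnessed by $i \in C \cap P$ and $j \in C' \cap Z$ with $i < j$ and $t_{C'}(j) = 1$. Then $0 < \xi_j \leq k$, and the second condition gives $0 < \xi_i < \xi_j \leq k$. So $i \in U$, and $C \subseteq U$ meets $P$, which forces $C \in \mathcal U$.

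The main obstacle is non-emptiness of $\mathcal U$: a priori $U$ could consist only of cycles contained in $Z$. This is where I will use the hypothesis that $W_{\mathrm{supp}_\sigma(v_{\delta})}$ is infinite, i.e. $P \neq \emptyset$ and $2r_0 \leq g+1$ by Lemma \ref{lem:affine-support}. The plan is to show that any nonempty union of $Z$-only cycles contains some $j > r_0$ with $s(j) = +1$. Then the second condition of Lemma \ref{lem:LP-explicit}, applied with $i = r_0 \in P$, puts $r_0$ in $U$, so the cycle of $r_0$ lies in $\mathcal U$.

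The first thing I will try is the following. Elements $i < r_0$ lie in $Z$ with $\pi_{\delta}(i) = g+1-i > r_0$. Orientations flip sign at every step of a $Z$-only cycle. So if all $+1$ elements were below $r_0$, the cycle would alternate strictly between $\{1,\ldots,r_0-1\}$ and $\{g+2-r_0,\ldots,g\}$ via $i \mapsto g+1-i$. It would then be a $2$-cycle $(i, g+1-i)$ whose $+1$ element is $i$. I then expect to derive a contradiction by also using $s(\pi_{\delta}(i)) = -s(i)$ with the freedom to choose $k$ minimal. If that fails, I will instead modify $w$: discard such $2$-cycles from $U$ by reordering, which keeps the LP conditions, until a bad cycle is forced.
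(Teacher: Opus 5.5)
Your dictionary through $A_k = w(\{1,\ldots,k\})$, your construction of $w$ for ($\Leftarrow$), and your closure-under-arrows argument for ($\Rightarrow$) are correct and follow the paper's proof. The only difference there is cosmetic. You order $U$ by index and send everything outside $U$ to positive values, whereas the paper puts $S_{\mathcal U}\cap P$ before $S_{\mathcal U}\cap Z$ and uses the signs $\varepsilon_i$ outside $S_{\mathcal U}$; both choices satisfy Lemma \ref{lem:LP-explicit}.

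The gap is the non-emptiness of $\mathcal U$, which you correctly isolate but do not prove. Your intermediate claim is unjustified: you say that a $Z$-only cycle whose $+1$ elements all lie below $r_0$ ``alternates via $i \mapsto g+1-i$'' and is therefore a $2$-cycle $(i,g+1-i)$. On $Z$, the formula $\pi_{\delta}(i) = g+1-i$ holds exactly for $i<r_0$, so it does not describe the return step. Neither fallback closes the gap. The sign relation $s(\pi_\delta(i)) = -s(i)$ is already encoded in $s$ being an orientation, and choosing $k$ minimal does not by itself rule out $U$ being a single $Z$-only $2$-cycle. Discarding such $2$-cycles from $U$ could leave $U$ empty, and nothing in your argument forces a bad cycle to appear.

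The missing step is a direct computation with \eqref{eq:pi-definition}. If $j \in Z$ and $j > r_0$, then $\varphi_j \geq 1$ and $\pi_{\delta}(j) = g+1-j+\varphi_j$. Now let $C \subseteq U$ be a $Z$-only cycle and $i \in C$ with $s(i)=+1$ and $i<r_0$.
\begin{itemize}
\item By $2r_0 \leq g+1$, we get $\pi_{\delta}(i) = g+1-i > r_0$.
\item Since $\pi_{\delta}(i) \in Z$, this gives $\pi_{\delta}^2(i) = i + \varphi_{\pi_{\delta}(i)} > i$.
\item But $\pi_{\delta}$ is strictly decreasing on $Z$, so it restricts to an order-reversing bijection of $C$, hence an involution, and $\pi_{\delta}^2(i) = i$. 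This is a contradiction.
\end{itemize}
Alternatively, you can avoid the involution: iterating $\pi_{\delta}^2$ preserves $s=+1$ and strictly increases elements below $r_0$, which is impossible in a finite cycle.

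Hence every $Z$-only cycle in $U$ has a $+1$ element $j>r_0$, since $j=r_0$ is excluded because $r_0 \in P$. Your use of Lemma \ref{lem:LP-explicit} with $i=r_0$ then gives $r_0 \in U$, so the cycle of $r_0$ lies in $\mathcal U$. This is precisely where the hypothesis $2r_0 \leq g+1$ enters, and it is the content of the paper's argument that $S_0 \cap P \neq \emptyset$.
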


\begin{proof}
The hypothesis that $W_{\mathrm{supp}_\sigma(v_{\delta})}$ is infinite is equivalent, by Lemma \ref{lem:affine-support}, to the fact that $P \not = \emptyset$ and $2r_0 \leq g+1$. For $w^{-1} = [\xi_1,\ldots,\xi_g]$, we shall use the simple identity
\begin{equation}\label{eq:conjugate-sign}
    (w^{-1} \widetilde{\pi}_{\delta} w)(|\xi_i|) = \mathrm{sgn}(\xi_i) \varepsilon_i \xi_{\pi_{\delta}(i)}.
\end{equation}
First suppose that $\mathcal U$ is as in the Proposition. Recall that every cycle $C \in \mathcal B_{\delta}$, and so a fortiori in $\mathcal U$, comes with a distinguished orientation $t_C$. We define
\begin{equation*}
    S_{\mathcal U} \coloneq \bigcup_{C \in \mathcal U} C, \qquad k \coloneq |S_{\mathcal U}|, \qquad m \coloneq |S_{\mathcal U} \cap P|.
\end{equation*}
Choose a bijection $\rho : \{1, \ldots, g\} \xrightarrow{\sim} \{1, \ldots, g\}$ such that
\begin{align*}
    \rho(S_{\mathcal U}\cap P) & = \{1, \ldots, m\},\\
    \rho(S_{\mathcal U}\cap Z) & = \{m+1, \ldots, k\},\\
    \rho(\{1, \ldots, g\}\setminus S_{\mathcal U}) & = \{k+1, \ldots, g\}.
\end{align*}
Define
\begin{equation*}
    \xi_i = \begin{cases}
        t_C(i) \rho(i), & \text{if } i \in C \text{ for some } C \in \mathcal U,\\
        \varepsilon_i \rho(i), & \text{if } i \not \in S_{\mathcal U},
    \end{cases}
\end{equation*}
and let $w \in W_0$ be the signed permutation determined by $w^{-1} = [\xi_1,\ldots,\xi_g]$. 

Every $\xi_i$ with $i \in P$ is positive. Suppose that $i < j$, with $i \in P$, $j \in Z$, and $\xi_j > 0$. Then $j$ lies in a cycle $C' \in \mathcal U$ with $t_{C'}(j) = 1$. If $i \not \in S_{\mathcal U}$, its cycle would give an arrow into $C'$ from outside $\mathcal U$, contrary to our assumptions. Thus $i \in S_{\mathcal U}$, and by construction of $\rho$ we obtain $\xi_i < \xi_j$. Lemma \ref{lem:LP-explicit} then proves that $w \in \mathrm{LP}(v_{\delta})$. Moreover, the definition \eqref{eq:orientation} of orientations and the relation \eqref{eq:conjugate-sign} give
\begin{equation*}
    \forall i \in S_{\mathcal U}, \qquad (w^{-1}\widetilde{\pi}_{\delta} w)(|\xi_i|)
    =|\xi_{\pi_{\delta}(i)}|.
\end{equation*}
Hence $w^{-1} \widetilde{\pi}_{\delta} w$ preserves $\{1,\ldots,k\}$,
and Lemma \ref{lem:finite-parabolic} ensures that $\mathrm{supp}(w^{-1}\widetilde{\pi}_{\delta}w) \subsetneq S$.

Conversely, choose $w \in \mathrm{LP}(v_{\delta})$ with $\mathrm{supp}(w^{-1}\widetilde{\pi}_{\delta} w) \subsetneq S$. Choose $k$ as in Lemma \ref{lem:finite-parabolic}, that is such that $w^{-1}\widetilde{\pi}_{\delta} w$ preserves $\{1, \ldots, k\}$. We consider
\begin{equation*}
    S_0 \coloneq \{i \mid |\xi_i| \leq k\},
\end{equation*}
and we define $t(i) \coloneq \mathrm{sgn}(\xi_i)$ for all $1 \leq i \leq g$. By \eqref{eq:conjugate-sign}, we have 
\begin{equation*}
    \forall i \in S_0, \qquad (w^{-1} \widetilde{\pi}_{\delta} w)(|\xi_i|) = t(i) \varepsilon_i t(\pi_{\delta}(i)) |\xi_{\pi_{\delta}(i)}| \in \{1, \ldots, k\}.
\end{equation*}
It follows that $\pi_{\delta}(i) \in S_0$, and therefore $S_0$ is a union of $\pi_{\delta}$-cycles, and also that
\begin{equation}\label{eq:selected-orientation}
    \forall i \in S_0, \qquad t(\pi_{\delta}(i)) = \varepsilon_i t(i).
\end{equation}
Since $t(i) = 1$ for all $i \in P$ by Lemma \ref{lem:LP-explicit}, every cycle contained in $S_0$ and meeting $P$ is bad. 

We claim that $S_0 \cap P \not = \emptyset$. Otherwise $S_0 \subseteq Z$, and $\pi_{\delta}$ restricts to a strictly decreasing bijection of $S_0$, hence an involution. The relation \eqref{eq:selected-orientation} shows that $t$ changes sign along the cycles within $S_0$, so some $j \in S_0$ must satisfy $t(j) = 1$. If $j > r_0$, then Lemma \ref{lem:LP-explicit} gives $0 < \xi_{r_0} < \xi_j \leq k$, contradicting the fact that $S_0 \subseteq Z$. Thus $j < r_0$, and
\begin{equation*}
    \pi_{\delta}(j) = g+1-j > r_0,
\end{equation*}
where the inequality uses $2r_0 \leq g+1$. Since $\pi_{\delta}(j) \in S_0 \subseteq Z$, the formula \eqref{eq:pi-definition} now gives
\begin{equation*}
    \pi_{\delta}^2(j) = j + \varphi_{\pi_{\delta}(j)}.
\end{equation*}
But since $\pi_{\delta}(j) > r_0$, we have $\varphi_{\pi_{\delta}(j)} > 0$ and therefore $\pi_{\delta}^2(j) > j$, contradicting the fact that $\pi_{\delta}^2$ is the identity on $S_0$.

It follows that
\begin{equation*}
    \mathcal U \coloneq \{C \subseteq S_0 \mid C \text{ is a cycle of } \pi_{\delta} \text{ and } C \cap P \not = \emptyset\},
\end{equation*}
is a non-empty subset of $\mathcal B_{\delta}$. Now suppose that $C \to C'$ is an arrow in $\Gamma_{\delta}$ with $C' \in \mathcal U$, and choose $i \in C \cap P$ and $j \in C' \cap Z$ as in Definition \ref{defi:cycle_graph}. The restriction of $t$ to $C'$ is its distinguished orientation, so $\xi_j > 0$. By Lemma \ref{lem:LP-explicit}, $0 < \xi_i < \xi_j \leq k$. Thus $i\in S_0$, and the entire cycle $C$ lies in $S_0$. Since $i\in P$, we have $C \in \mathcal U$, as required.
\end{proof}

\begin{proof}[Proof of Theorem \ref{thm:main}]
The element $\dot{\tau}$ is basic, and $\kappa(v_{\delta}) = \kappa(\dot\tau) = 1$, so Theorem \ref{thm:schremmer} applies. If $W_{\mathrm{supp}_\sigma(v_{\delta})}$ is finite, then Theorem \ref{thm:schremmer} shows that $X_{v_{\delta}}(\dot{\tau}) \not = \emptyset$. In this case, Lemma \ref{lem:right-half} gives $\mathcal B_{\delta} = \emptyset$, so $(*)_{\delta}$ also holds.

Otherwise, we may assume that $W_{\mathrm{supp}_\sigma(v_{\delta})}$ is infinite. Since $\mathrm{pr}_0(v_{\delta}) = \widetilde{\pi}_{\delta}$, Proposition \ref{prop:bridge} identifies the second condition of Theorem \ref{thm:schremmer} with the existence of a non-empty subset $\mathcal U \subseteq \mathcal B_{\delta}$ of bad cycles to which no external arrow of $\Gamma_{\delta}$ points. By Lemma \ref{lem:no-incoming}, this is exactly the failure of $(*)_{\delta}$. Thus in all cases, the condition $(*)_{\delta}$ is equivalent to $X_{v_{\delta}}(\dot\tau)\not = \emptyset$. Proposition \ref{prop:dictionary} completes the proof.
\end{proof}

\section{The number of EO strata meeting the supersingular locus}\label{sec:enumeration}

Recall that $N_g$ is the number of EO strata in $\mathcal A_g$ meeting $\mathcal S_g$. Equivalently, it is the number of increment sequences $\delta \in \{0, 1\}^{g}$ such that $(*)_{\delta}$ holds. By Lemma \ref{lem:positive-p-rank}, any such $\delta$ actually belongs to $\mathcal D_g$ where
\begin{equation}\label{eq:Dg}
    \mathcal D_g \coloneq \left\{ \delta \in \{0,1\}^g \mid \delta_1 = 0 \right\}, \qquad |\mathcal D_g|=2^{g-1}.
\end{equation}
We write
\begin{equation}\label{eq:defect-count}
    f_g \coloneq 2^{g-1}-N_g \geq 0.
\end{equation}
Thus $f_g$ counts the increment sequences $\delta \in \mathcal D_g$ for which $(*)_{\delta}$ fails. Equivalently, it counts the number of $p$-rank zero EO strata which do not meet the supersingular locus. The aim of this section is to prove the following estimate.

\begin{theorem}\label{thm:asymptotic-count}
For all $g \geq 3$, we have
    \begin{equation*}
        \frac{2^{g-2}-1}{g-2} \leq f_g \leq \frac{2^{g+2}-2}{g+2}.
    \end{equation*}
In particular, $N_g \sim 2^{g-1}$ as $g \to \infty$.
\end{theorem}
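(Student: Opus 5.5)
I will work through the introduction's suggested route: a bijection between $\mathcal D_g$ and compositions of $g$, built so that the cycle structure of $\pi_{\delta}$ is visible in it. Then I compare the failing $\delta$ with compositions whose first part is (weakly) maximal. The number $c_g$ of compositions of $g$ whose first part is at least every other part is classically of order $2^g/g$. Bounds of the shape $(2^{m}-2)/m$ suggest a cyclic-rotation argument: the cyclic group $\mathbb Z/m\mathbb Z$ acts on binary words of length $m$, and the non-constant orbits have size dividing $m$. So the plan is to realize the failing sequences, or an explicit subset or superset of them, as representatives of rotation orbits.

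\textbf{Step 1 (bijection).} Write $\delta\in\mathcal D_g$ as $0^{b_1}1^{c_1}0^{b_2}1^{c_2}\cdots$, with $b_1\ge1$. By \eqref{eq:pi-definition}, $\pi_{\delta}$ sends the $Z$-positions in decreasing order to the top values and the $P$-positions in increasing order to the bottom values. I will describe the orbits of $\pi_{\delta}$ in terms of the block lengths. In particular, $\{1,\dots,k\}$ is sent into $\{g-k+1,\dots,g\}$ when the first $k$ letters are $0$. I then encode $\delta$ as a composition of $g$ through its block lengths.

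\textbf{Step 2 (lower bound).} I will exhibit an explicit family of failing $\delta$ of size at least $(2^{g-2}-1)/(g-2)$. The model is examples (b), (c) and (f): there, a bad cycle contains the smallest elements, and no $P$-element lies below its $Z$-elements with positive orientation, so no arrow enters it. Concretely, I will take $\delta=0\,1\,w$, where $w$ is a word of length $g-2$ chosen among representatives of rotation classes, for instance the Lyndon-type representative of each non-constant necklace. I must then verify, via Lemma \ref{lem:no-incoming} with $\mathcal U=\{C\}$, that the cycle $C$ through $1$ is bad and has no incoming arrows. Distinct necklaces must give distinct $\delta$. There are at least $(2^{g-2}-2)/(g-2)$ non-constant necklaces, and adding one constant word gives the stated bound.

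\textbf{Step 3 (upper bound).} If $(*)_{\delta}$ fails, Lemma \ref{lem:no-incoming} gives a closed set $\mathcal U$. Take $i_0=\min P$, which satisfies $2i_0\le g+1$ by Lemma \ref{lem:right-half}. I will argue that the bad cycles in $\mathcal U$ must avoid all $P$-elements that are smaller than their positive $Z$-elements. This should force a rigid initial segment of $\delta$ determined by the cycle containing $1$, for example that the first $0$-block is at least as long as every later block, which is the ``first part maximal'' condition. I will then inject the failing sequences into compositions of $g+2$ (pad with two letters) with a maximal-first-part or necklace-representative property. Finally I bound their number by $(2^{g+2}-2)/(g+2)$ using the rotation count: each non-constant word of length $g+2$ has at most one such representative per orbit of size $g+2$.

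\textbf{Main obstacle.} The upper bound is the hard part: turning the graph-theoretic failure into a clean combinatorial constraint on block lengths. I will first try to prove that the cycle through $1$ always lies in $\mathcal U$ when $(*)_{\delta}$ fails; minimality of $1$ makes arrows out of it easiest to control. Once both bounds hold, the asymptotic $N_g\sim 2^{g-1}$ follows immediately from \eqref{eq:defect-count}, since $f_g=O(2^g/g)=o(2^{g-1})$.
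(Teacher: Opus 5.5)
Your plan has the right overall shape (compositions with maximal first part, counted via rotation classes), but the concrete choices you commit to do not work, and the real content of the proof is missing.

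\textbf{Lower bound.} In Step 2, the family $\delta=01w$, with $w$ a Lyndon-type representative, does not consist of failing sequences. The minimal rotation of a non-constant word ends in $1$, so $\delta_g=1$. Then $\pi_\delta(2)=1$ and $\pi_\delta(1)=g\in P$. So in the cycle through $1$, the single $Z$-element $1$ sits between the consecutive $P$-elements $2$ and $g$. That is an odd gap, so the cycle is good. For $g=7$, all six resulting words $\mathtt{0100001},\mathtt{0100011},\mathtt{0100101},\mathtt{0100111},\mathtt{0101011},\mathtt{0101111}$ appear in Table \ref{tab:g7-increment-words}, i.e.\ satisfy $(*)_\delta$, while you need $f_7\geq 7$. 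Changing the representative does not help either. The maximal rotation $\mathtt{11010}$ gives $\delta=\mathtt{0111010}$, for which $\pi_\delta$ is a single $7$-cycle with three elements of $Z$, so there is no bad cycle at all.

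The root cause is Step 1. The run-length encoding of $\delta$ is a valid bijection between $\mathcal D_g$ and compositions of $g$, but it does not see the cycles of $\pi_\delta$. The paper instead encodes $\delta$ by the lengths of the \emph{chains} of $\pi_\delta$: start at $\varphi_g+j$, follow $\pi_\delta$ through $P$, and stop at the first element of $Z$. The key structural input is Lemma \ref{lem:run-length-order}, $r(1)\geq\cdots\geq r(g)$, which makes $c\mapsto\delta(c)$ explicit (Proposition \ref{prop:composition-parametrization}). In these coordinates, $(a,R)\in\mathcal W_{g-2}\mapsto c=(a+1,R,1)$ produces a bad cycle, namely the first chain together with $\{g\}$. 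Its only positively oriented $Z$-element is $1<\min P$, so no arrow points to it. The rotation count is then applied to compositions of $g-2$, giving $W_{g-2}\geq (2^{g-2}-1)/(g-2)$.

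\textbf{Upper bound.} Both claims you intend to lean on are false.
\begin{itemize}
\item The cycle through $1$ need not lie in $\mathcal U$. In example (f) of Figure \ref{fig:larger-cycle-graphs}, the cycle $(1,11,4)$ is good and the failure comes from $(2,10,5)$.
\item The first $0$-block is not forced to be longest. The two failing sequences for $g=4$ are $\mathtt{0100}$ and $\mathtt{0110}$, each with first block of length $1$ and a later block of length $2$.
\end{itemize}
The missing idea is Lemma \ref{lem:boundary-vertex}: some bad cycle $C$ contains $j\in Z$ with $t_C(j)=1$ and $j<\min P$. Take $j$ minimal. Then:
\begin{itemize}
\item the chain lengths along $C$, read backwards from $j$, are forced to be $(\ell_1,1,\ell_2,1,\ldots,\ell_q,1)$ with $\ell_i\leq\ell_1$, again by Lemma \ref{lem:run-length-order};
\item every other chain has length at most $\ell_1+1$;
\item $\pi_\delta$ restricted to the complement of $C$ is transported by the increasing bijection to $\pi_\eta$ for some $\eta\in\mathcal D_{g-|C|}$.
\end{itemize}
This yields $(\ell_1+1,\ldots,\ell_q+1,1,c(\eta))\in\mathcal W_{g+1}$, and the map is injective by Lemma \ref{lem:cycle-rigidity}.

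Two further points about your counting.
\begin{itemize}
\item If your target is literally compositions of $g+2$ with maximal first part, it is too large. Lemma \ref{lem:Wn-asymptotic-order} gives $W_{g+2}\geq(2^{g+2}-1)/(g+2)$, which already exceeds the bound you want. The right target is $\mathcal W_{g+1}$, encoded as binary words of length $g+2$.
\item The rotation bound needs every orbit to have full size $g+2$. The paper gets this by encoding the first part as $1\,0^{c_1}$ and the others as $1\,0^{c_i-1}$, so the first block is strictly longest. An arbitrary set of necklace representatives includes periodic orbits.
\end{itemize}
Your final deduction of $N_g\sim 2^{g-1}$ from the two bounds is fine.
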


We first build a particular bijection between $\mathcal D_g$ and the set of compositions of $g$, and then compare $f_g$ with the number of compositions whose first part is maximal.

\subsection{A parametrization by compositions of integers}

A composition of $n \geq 1$ is a finite sequence of positive integers with sum $n$. Fix an elementary sequence $\varphi$ whose increment sequence $\delta$ belongs to $\mathcal D_g$. Recall that $|P| = \varphi_g$ and that for all $i \in P$, $\pi_{\delta}(i) = \varphi_i < i$. It follows that every cycle of $\pi_{\delta}$ must meet $Z$ non-trivially. Besides, recall that $\pi_{\delta}(P) = \{1, \ldots, \varphi_g\}$. For each $1 \leq j \leq g-\varphi_g$, define

\begin{equation}\label{eq:definition_chains}
    a_j \coloneq \min \{a \geq 1 \mid \pi_{\delta}^{a-1}(\varphi_g+j) \in Z\}, \qquad T_j \coloneq \left( \pi_{\delta}^h(\varphi_g+j) \right)_{0 \leq h < a_j}.
\end{equation}

We call $T_j$ the $j$-th \textit{chain} of $\pi_{\delta}$. It is a finite decreasing sequence of $a_j$ distinct integers. Its first $a_j-1$ entries belong to $P$, and its last entry belongs to $Z$. In particular, if $\varphi_g+j \in Z$, then $a_j = 1$ and $T_j = (\varphi_g+j)$. Every integer of $\{1, \ldots , g\}$ belongs to exactly one of the chains $T_j$, so that all together they form a partition of $\{1, \ldots, g\}$.  Thus, we have $\sum_{j=1}^{g-\varphi_g} a_j = g$ and
\begin{equation}\label{eq:chain-composition}
    c(\delta) \coloneq (a_1, \ldots, a_{g-\varphi_g}),
\end{equation}
is a composition of $g$. Before explaining the inverse map, we record a useful property of the construction $\delta \mapsto c(\delta)$.

We define a function $r$ on $\{1, \ldots, g\}$ by the formula
\begin{equation}\label{eq:chain-position}
    \forall 1 \leq j \leq g - \varphi_g, \; \forall 0 \leq h < a_j, \qquad r \left( \pi_{\delta}^h(\varphi_g+j) \right) \coloneq h.
\end{equation}
In other words, $r(i)$ is the position of $i$ within the chain $T_j$ to which it belongs. In particular, $r(\varphi_g+j) = 0$ and $r(\pi_{\delta}(i)) = r(i)+1$ for $i \in P$. 

\begin{lemma}\label{lem:run-length-order}
    We have $r(1) \geq \cdots \geq r(g)$.
\end{lemma}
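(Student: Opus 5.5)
We will show directly that $r(x) \geq r(y)$ whenever $1 \leq x < y \leq g$, by descending induction on $x$. The plan is first to derive an explicit recursive formula for $r$ from the definition \eqref{eq:chain-position}, and then to run the induction on that formula.

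First, we describe $r$ concretely. Since $\pi_{\delta}(P) = \{1, \ldots, \varphi_g\}$, an integer $y > \varphi_g$ has no preimage in $P$ under $\pi_{\delta}$. Hence $y$ cannot occur at a positive position of any chain, because every non-initial entry of a chain is $\pi_{\delta}$ of an element of $P$. It follows that $r(y) = 0$ for all $y > \varphi_g$. This is consistent with these $y$ being exactly the starting points $\varphi_g + j$ of the chains.

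Next, take $x \leq \varphi_g$. Let $p_x$ denote the $x$-th smallest element of $P$, so that $\pi_{\delta}(p_x) = \varphi_{p_x} = x$ by \eqref{eq:pi-definition}. Since $p_x \in P$, the relation $r(\pi_{\delta}(i)) = r(i)+1$ for $i \in P$ gives
\begin{equation*}
    r(x) = 1 + r(p_x).
\end{equation*}
Moreover $p_x > x$. Indeed, $\delta_1 = 0$ because $\delta \in \mathcal D_g$, so $\varphi_i < i$ for every $i \geq 1$; applied to $i = p_x$ this gives $x = \varphi_{p_x} < p_x$. Finally, the map $x \mapsto p_x$ is strictly increasing on $\{1, \ldots, \varphi_g\}$.

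Now we prove $r(x) \geq r(y)$ for all $x < y$ by descending induction on $x$. If $y > \varphi_g$, then $r(y) = 0 \leq r(x)$ and there is nothing to prove; this covers in particular the base of the induction. Otherwise $x < y \leq \varphi_g$. In that case $p_x < p_y$, and since $p_x > x$, the induction hypothesis applies to the pair $p_x < p_y$. It gives $r(p_x) \geq r(p_y)$, hence
\begin{equation*}
    r(x) = 1 + r(p_x) \geq 1 + r(p_y) = r(y).
\end{equation*}

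The only delicate point is to make sure the induction is well founded. This is exactly the inequality $p_x > x$, which relies on $\delta_1 = 0$.
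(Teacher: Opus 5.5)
Your proof is correct. It uses the same two ingredients as the paper: $r$ increases by one along $\pi_{\delta}$ on $P$, and $\pi_{\delta}|_P$ is the increasing bijection onto $\{1,\ldots,\varphi_g\}$. The difference is in how the induction is organized.

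The paper works with the superlevel sets $H_h = \{i \mid r(i) \geq h\}$. It shows $H_{h+1} = \pi_{\delta}(P \cap H_h)$, and then proves by induction on the level $h$ that each $H_h$ is an initial segment $\{1,\ldots,m_h\}$. The reason is that $\pi_{\delta}$ sends the first $k$ elements of $P$ onto $\{1,\ldots,k\}$. Monotonicity of $r$ then follows at once.

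You instead use the pointwise recursion $r(x) = 1 + r(p_x)$ for $x \leq \varphi_g$, together with $r = 0$ on $\{\varphi_g+1,\ldots,g\}$, and induct downward on the index $x$. The latter fact is immediate from the definition $r(\varphi_g+j)=0$, so your preimage argument is not needed there.

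What each route buys:
\begin{itemize}
\item The paper's induction on $h$ is automatically well founded; the hypothesis $\delta_1 = 0$ enters only through $r$ being defined on all of $\{1,\ldots,g\}$. It also gives the slightly finer description $m_{h+1} = |P \cap H_h|$.
\item Your induction needs the extra inequality $p_x > x$, that is, $\varphi_i < i$ for $i \in P$, to be well founded. In exchange, it compares values of $r$ directly without passing through level sets.
\end{itemize}
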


\begin{proof}
For every $h \geq 0$, we define $H_h \coloneq \{i \mid r(i) \geq h\}$. We claim that 
\begin{equation*}
    H_0 = \{1, \ldots, g\}, \text{ and } \forall h \geq 0, \quad H_{h+1} = \pi_{\delta}(P \cap  H_h).
\end{equation*}
The equality for $H_0$ is clear. On the other hand, let $i \in P \cap H_h$. We have $r(\pi_{\delta}(i)) = r(i) + 1 \geq h+1$ so that $\pi_{\delta}(i) \in H_{h+1}$. Conversely, if $i \in H_{h+1}$ then $r(i) \geq 1$ so $i$ is not the first element in the chain $T_j$ to which it belongs. Thus $\pi_{\delta}^{-1}(i) \in P$ and we have $r(\pi_{\delta}^{-1}(i)) = r(i) - 1 \geq h$ so that $i \in \pi_{\delta}(P \cap H_{h})$. This proves the claim.

Next, we prove that every $H_h$ has the form $\{1, \ldots , m_h\}$ for some $m_h \geq 0$. This already holds for $h = 0$ with $m_0 = g$. Assume that it holds for some $h \geq 0$. Let us write $P = \{p_1, \ldots , p_{\varphi_g}\}$ where $p_1 < \ldots < p_{\varphi_g}$. Then $P \cap H_h = \{p_1, \ldots , p_k\}$ for some $0 \leq k \leq \varphi_g$. Since $\pi_{\delta}(p_i) = i$ for every $1 \leq i \leq \varphi_g$, we have $H_{h+1} = \pi_{\delta}(P \cap H_h) = \{1, \ldots , k\}$. Taking $m_{h+1} = k$, this proves the claim. 

We can now conclude. Let $1 \leq i < j \leq g$. Clearly, we have $j \in H_{r(j)}$, from which it follows that $i \in H_{r(j)}$. This means that $r(i) \geq r(j)$, and concludes the proof.
\end{proof}

We now describe the construction inverse of $\delta \mapsto c(\delta)$. For a composition $c = (a_1, \ldots, a_s)$ of $g$, consider the set
\begin{equation*}
    \mathcal H(c) \coloneq \{(j, h) \mid 1 \leq j \leq s \text{ and } 0 \leq h < a_j\}.
\end{equation*}
We equip $\mathcal H(c)$ with a total order as follows: for $(j, h), (j', h') \in \mathcal H(c)$,
\begin{equation*}
    (j, h) \leq (j', h') \iff h > h' \text{ or } (h = h' \text{ and } j \leq j').
\end{equation*}
In other words, the order is decreasing with $h$, and then increasing with $j$. Assign the value $0$ to $(j, a_j-1) \in \mathcal H(c)$ for every $1 \leq j \leq s$, and the value $1$ to every other pair. Let $\delta(c)$ be the sequence obtained by reading these values in the order defined by $\leq$. 

Equivalently, the set $\mathcal H(c)$ can be represented by drawing $s$ columns of respective heights $a_1, \ldots, a_s$, aligned at the bottom. The top of each column is assigned $0$ and the other entries are assigned $1$. Then $\delta(c)$ is obtained by reading each value row by row, from top to bottom and from left to right. For example, $c = (3, 1, 2)$ gives
\begin{equation*}
    \begin{array}{ccc}
        0 & &\\
        1 & & 0\\
        1 & 0 & 1
    \end{array} \qquad \mapsto \qquad \delta(c) = (0, 1, 0, 1, 0, 1).
\end{equation*}

\begin{proposition}\label{prop:composition-parametrization}
    The assignments $\delta \mapsto c(\delta)$ and $c \mapsto \delta(c)$ are inverse bijections between $\mathcal D_g$ and the set of compositions of $g$.
\end{proposition}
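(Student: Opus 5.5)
Since both sets have cardinality $2^{g-1}$ (compositions of $g$ number $2^{g-1}$, and $|\mathcal D_g| = 2^{g-1}$ by \eqref{eq:Dg}), it suffices to prove one of the two identities, and I would prove $\delta(c(\delta)) = \delta$ for every $\delta \in \mathcal D_g$. This shows that $c$ is injective, hence bijective by counting, and then $\delta(\cdot)$ is forced to be its inverse. Before that, I would check that $\delta(c)$ lies in $\mathcal D_g$: the first element of $\mathcal H(c)$ for the order $\leq$ is $(j,h)$ with $h$ maximal and, among those, $j$ minimal. Any pair $(j,h)$ with $h = \max_{j'} (a_{j'}-1)$ is the top of its column, so it receives the value $0$. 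Hence the first entry of $\delta(c)$ is $0$.

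The key step is to build an explicit order-preserving identification of $\{1,\ldots,g\}$ with $\mathcal H(c(\delta))$. I would send $i$ to $(j, r(i))$, where $T_j$ is the chain containing $i$. This map is a bijection because the chains partition $\{1,\ldots,g\}$ and $r$ indexes positions inside chains. It sends $i\in Z$ exactly to the tops $(j,a_j-1)$, since the last entry of each chain is its unique element of $Z$. So if the map is increasing for $\leq$, then reading $\delta$ in increasing order of $i$ reproduces $\delta(c(\delta))$. Lemma \ref{lem:run-length-order} already says that $i<i'$ implies $r(i)\geq r(i')$, which matches the primary ordering (decreasing in $h$). It remains to show that within a fixed level set $\{i \mid r(i)=h\}$, increasing $i$ corresponds to increasing chain index $j$.

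For this I would induct on $h$. At $h=0$, the level set consists of the chain heads $\varphi_g+j$, so the statement is immediate. For $h\geq 1$, every element of level $h$ has the form $\pi_\delta(i)$ with $i\in P$ at level $h-1$ in the same chain. Since $\pi_\delta$ is increasing on $P$, and the level-$(h-1)$ elements of $P$ are ordered by chain index by induction, the level-$h$ elements are ordered by chain index too. One subtlety is that the level $h-1$ set contains elements of $Z$, namely chain ends, which have no successor. This causes no trouble because we only restrict to a subset.

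The main obstacle I expect is the bookkeeping in this induction: making sure that "ordered by $j$" is transported correctly under $\pi_\delta|_P$. The structural fact from the proof of Lemma \ref{lem:run-length-order}, that each $H_h$ is an initial segment, should keep this clean.
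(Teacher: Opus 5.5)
Your proof is correct. For the identity $\delta(c(\delta)) = \delta$ it is the paper's argument run in reverse. Your map $i \mapsto (j, r(i))$ is the inverse of the paper's $f : (j,h) \mapsto \pi_\delta^h(\varphi_g + j)$. Its monotonicity rests on the same two inputs: Lemma~\ref{lem:run-length-order} for comparing different levels, and the monotonicity of $\pi_\delta$ on $P$ within a level. Your induction on $h$ just makes explicit what the paper says in one line.

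The genuine difference is that you replace the paper's direct proof of $c(\delta(c)) = c$ by a cardinality count. Since $|\mathcal D_g| = 2^{g-1}$ equals the number of compositions of $g$, the left-inverse identity makes $\delta \mapsto c(\delta)$ injective, hence bijective, and $c \mapsto \delta(c)$ is then its inverse. This is shorter. It also makes your separate check that $\delta(c) \in \mathcal D_g$ superfluous, since that follows from bijectivity.

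The paper instead verifies $c(\delta(c)) = c$ directly. It observes that, under $\iota_c$, the shift $(j,h) \mapsto (j,h+1)$ on pairs labelled $1$ is exactly $\pi_{\delta(c)}|_P$. This yields the explicit formula $\iota_c(j,h) = \pi_{\delta(c)}^h(g-s+j)$, i.e. the columns of $\mathcal H(c)$ are the chains of $\pi_{\delta(c)}$. That formula is quoted later in the proof of Proposition~\ref{prop:failure-sandwich}. Your route recovers it a posteriori: once $c(\delta(c)) = c$ is known, your increasing bijection applied to $\delta(c)$ is exactly $\iota_c^{-1}$. So nothing is lost, but you should record this consequence explicitly if you use the counting shortcut.
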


\begin{proof}
For $c = (a_1, \ldots, a_s)$ a composition of $g$, the minimum of $\mathcal H(c)$ lies at the top of its corresponding column, so that it is assigned $0$. Thus $\delta(c) \in \mathcal D_g$ since it starts with $\delta(c)_1 = 0$. Let $P$, $Z$ and $\pi_{\delta(c)}$ be defined with respect to $\delta(c)$. The map $(j, h) \mapsto (j, h+1)$ is a well-defined increasing bijection from the subset of pairs $(j, h) \in \mathcal H(c)$ assigned with $1$, onto the subset of pairs $(j, h) \in \mathcal H(c)$ with $h > 0$. Consider the increasing bijection 
\begin{equation}\label{eq:map_iota_c}
    \iota_c: \mathcal H(c) \xrightarrow{\sim} \{1, \ldots, g\},
\end{equation}
induced by the total order $\leq$ on $\mathcal H(c)$. By definition, $\iota_c$ sends the pairs $(j, h)$ assigned with $1$ to $P$, and the pairs $(j, h)$ with $h > 0$ to $\{1, \ldots , g-s\}$. Thus, the mapping $(j, h) \mapsto (j, h+1)$ coincides, under the identification $\iota_c$, with the restriction of $\pi_{\delta(c)}$ to $P$. We have $|P| = g-s$, and it follows that $\iota_c(j,h) = \pi_{\delta(c)}^{h}(g-s+j)$. The columns of $\mathcal H(c)$ are the chains of $\pi_{\delta(c)}$ and we obtain $c(\delta(c)) = c$. 

Conversely, start with $\delta \in \mathcal D_g$ and consider $\mathcal H(c(\delta))$. The partition of $\{1, \ldots, g\}$ by the chains $T_j$ of $\pi_{\delta}$ yields a bijection
\begin{equation*}
    f: \mathcal H(c(\delta)) \xrightarrow{\sim} \{1, \ldots, g\}, \qquad (j, h) \mapsto \pi_{\delta}^h(\varphi_g + j).
\end{equation*}
We prove that this map is increasing. If $h > h'$, then by \eqref{eq:chain-position} we have $r(f(j, h)) = h > h' = r(f(j', h'))$. By Lemma \ref{lem:run-length-order}, this forces $f(j, h)  < f(j', h')$. Now if $h = h'$ and $j < j'$, we have $\varphi_g + j < \varphi_g + j'$ and since $h < \min(a_j, a_{j'})$, both $\pi_{\delta}^{\nu}(\varphi_g + j)$ and $\pi_{\delta}^{\nu}(\varphi_g + j')$ belong to $P$ for all $0 \leq \nu < h$. As $\pi_{\delta}$ is increasing on $P$, we deduce that $f(j, h) < f(j', h)$. Thus $f$ is increasing and therefore $f = \iota_{c(\delta)}$. We deduce that the permutations $\pi_{\delta}$ and $\pi_{\delta(c(\delta))}$ produce the same chains, and therefore $\delta = \delta(c(\delta))$. 
\end{proof}

\subsection{Comparison with constrained compositions}

For $n \geq 1$, let $\mathcal W_n$ be the set of compositions of $n$ whose first part is weakly maximal, and put $W_n = |\mathcal W_n|$. We compare $\mathcal W_n$ with the set of sequences $\delta \in \mathcal D_g$ failing $(*)_{\delta}$. We first record two lemmas.

\begin{lemma}\label{lem:cycle-rigidity}
    Let $\delta, \delta' \in \mathcal D_g$. Suppose that a bijection $f : \{1, \ldots, g\} \to \{1, \ldots, g\}$ satisfies
    \begin{equation*}
        f \circ \pi_{\delta} = \pi_{\delta'} \circ f, \text{ and } \forall 1 \leq i \leq g, \quad \delta'_{f(i)} = \delta_i.
    \end{equation*}
    Then $\delta = \delta'$.
\end{lemma}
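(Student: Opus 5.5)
The plan is to show that $f$ preserves the chain structure of Section \ref{sec:enumeration}, so that $c(\delta) = c(\delta')$. Proposition \ref{prop:composition-parametrization} then gives $\delta = \delta'$.

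First, $f$ maps $P_\delta$ onto $P_{\delta'}$ and $Z_\delta$ onto $Z_{\delta'}$, because $\delta'_{f(i)} = \delta_i$. In particular $|P_\delta| = |P_{\delta'}|$, so $\varphi_g = \varphi'_g$ and both compositions have the same number $s = g - \varphi_g$ of parts. Next, the chains are intrinsic to the pair $(\pi_\delta, P_\delta)$.
\begin{itemize}
\item A chain starts at an element $x$ with $\pi_\delta^{-1}(x) \in Z$; these are exactly the elements of $\pi_\delta(Z) = \{\varphi_g+1,\dots,g\}$.
\item From $x$, the chain follows $\pi_\delta$ until it reaches the first element of $Z$.
\end{itemize}
Since $f$ intertwines $\pi_\delta$ with $\pi_{\delta'}$ and preserves $P$, $f$ maps chains bijectively onto chains of the same length. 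It also preserves the position function, $r'(f(i)) = r(i)$.

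Chain lengths alone do not determine the composition, which is an ordered sequence. So I must show that $f$ sends the $j$-th chain of $\pi_\delta$ to the $j$-th chain of $\pi_{\delta'}$. Equivalently, $f$ restricted to $\{\varphi_g+1,\dots,g\}$ should be the identity, or at least order-preserving. A direct argument seems hard, so I would aim for the stronger statement $f = \mathrm{id}$ and prove it by induction on the height $h$, through the level sets $H_h = \{i \mid r(i) \geq h\} = \{1,\dots,m_h\}$ from Lemma \ref{lem:run-length-order}. Since $f$ preserves $r$, it maps each level $r^{-1}(h)$ onto the corresponding level for $\delta'$. Both levels are intervals of integers (differences of consecutive $H$'s), and they have the same cardinality, so they are the same interval. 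Hence the sequences $m_h$ coincide for $\delta$ and $\delta'$.

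This already finishes the proof without knowing $f$ exactly. The multiset of chain lengths, together with the fact that $r^{-1}(h)$ is an interval, suffices if the order of the chains can be read off from the $m_h$. In the column picture, though, the $m_h$ only give the number of columns of height $> h$, not their positions. This is the main obstacle.

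To get the positions, I would use $\delta$ itself. Within the row $r^{-1}(h)$, read in increasing order, the entries of $\delta$ mark which columns end at that row: they take value $0$ exactly at the column tops. By Proposition \ref{prop:composition-parametrization}, $\delta$ restricted to the interval $r^{-1}(h)$ is the row-$h$ pattern. So it suffices to show $\delta|_{r^{-1}(h)} = \delta'|_{r^{-1}(h)}$ for each $h$.

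I would prove this by downward induction on $h$, from the top row. The element $i \in r^{-1}(h)$ with $h \ge 1$ equals $\pi_\delta(p)$ for the $p$ in row $h-1$ that is the $i$-th element of $P$. This shows the row-$h$ elements correspond, in order, to the $P$-elements of row $h-1$. Assuming $f$ is the identity on rows $> h$, which are already matched, I would use the relation $\pi_{\delta'} f = f\pi_\delta$ on the $P$-elements of row $h$. Since $\pi$ is increasing on $P$, and the images of these elements are fixed by $f$ (they lie in row $h+1$), this forces $f$ to fix the $P$-elements of row $h$.

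The $Z$-elements of row $h$ need separate treatment. Their $\pi$-images $g+1-(i-\varphi_i)$ are chain starts, so I would induct upward on rows from the bottom, using that $\pi$ is decreasing on $Z$. Setting up this two-directional induction cleanly, so that both orderings are pinned down, is the step I expect to require the most care. Once $f = \mathrm{id}$, the conclusion $\delta = \delta'$ is immediate from $\delta'_{f(i)} = \delta_i$.
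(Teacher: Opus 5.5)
There is a genuine gap. Your target, $f=\mathrm{id}$, is false, and so is the weaker claim that $f$ sends the $j$-th chain of $\pi_{\delta}$ to the $j$-th chain of $\pi_{\delta'}$ (i.e.\ is order-preserving on $\{\varphi_g+1,\ldots,g\}$). The hypotheses only determine $f$ up to automorphisms of the pair $(\pi_{\delta},\delta)$.

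For instance, take $\delta=\delta'=(0,0,1,1)\in\mathcal D_4$. Then $\pi_{\delta}$ is the $4$-cycle $(1,4,2,3)$, and $f=\pi_{\delta}^2=(1\,2)(3\,4)$ commutes with $\pi_{\delta}$ and satisfies $\delta_{f(i)}=\delta_i$. Yet $f$ swaps the two chains $(3,1)$ and $(4,2)$ and moves every element of the top row $r^{-1}(1)=\{1,2\}$. So your downward induction already fails at its base case. This is a separate problem from the circularity you anticipate between the $P$-elements (handled top-down) and the $Z$-elements (handled bottom-up); note that the top row consists only of $Z$-elements.

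What you do prove correctly is that $f$ preserves chains, their lengths and the function $r$, and that the rows $r^{-1}(h)$ are the same intervals for $\delta$ and $\delta'$. As you observe yourself, this only recovers the multiset of parts of $c(\delta)$, not their order.

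The missing idea is a property shared by \emph{every} admissible $f$, rather than a rigidity statement about $f$ itself. The paper shows that $f$ preserves the relative order of any $i\in P$ and any $j\in Z$. Suppose $(i-j)(f(i)-f(j))<0$, and put $a=\pi_{\delta}^{-1}(i)$, $b=\pi_{\delta}^{-1}(j)$.
\begin{itemize}
\item Since $\pi_{\delta}(P)$ precedes $\pi_{\delta}(Z)$, and likewise for $\delta'$, one gets $\delta_a=\delta_b$.
\item Since $\pi_{\delta}$ and $\pi_{\delta'}$ are both increasing on $P$, $P'$ and decreasing on $Z$, $Z'$, the pair $(a,b)$ is again inverted by $f$.
\item Iterating around the order of $\pi_{\delta}$ gives $\delta_i=\delta_j$.
\end{itemize}
Next, take the first index $k$ where $\delta_k=1\neq\delta'_k$. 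Counting elements of $P$ and $Z$ in $\{1,\ldots,k\}$ produces $i\in P$ and $j\in Z$ with $i\le k<f(i)$ and $f(j)\le k<j$. This is a mixed-type inversion, which is a contradiction.
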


\begin{proof}
Let $P'$ and $Z'$ be the two subsets of $\{1, \ldots, g\}$ defined by $\delta'$. By hypothesis, $f$ maps $P$ and $Z$ bijectively and respectively to $P'$ and to $Z'$. Given two integers $i, j \in \{1, \ldots, g\}$, we claim that
\begin{equation*}
    (i-j)(f(i)-f(j)) < 0 \implies \delta_i = \delta_j.
\end{equation*}
Writing $a = \pi_{\delta}^{-1}(i)$ and $b = \pi_{\delta}^{-1}(j)$, the inequality can be rewritten as
\begin{equation*}
    (\pi_{\delta}(a) - \pi_{\delta}(b))(\pi_{\delta'}(f(a)) - \pi_{\delta'}(f(b))) < 0.
\end{equation*}
Since $\pi_{\delta}(P)$ precedes $\pi_{\delta}(Z)$, and likewise $\pi_{\delta'}(P')$ precedes $\pi_{\delta'}(Z')$, this inequality forces $\delta_a = \delta_b$. Thus we have $\delta_a = \delta_b = \delta'_{f(a)} = \delta'_{f(b)}$. Since $\pi_{\delta}$ and $\pi_{\delta'}$ have the same monotonicity on $P$ and $P'$, and on $Z$ and $Z'$, we deduce that $(a-b)(f(a)-f(b)) < 0$. Iterating, it follows that actually $\delta_{\pi_{\delta}^{-\nu}(i)} = \delta_{\pi_{\delta}^{-\nu}(j)}$ for every $\nu \geq 1$. Taking $\nu$ as the order of $\pi_{\delta}$, it follows that $\delta_i = \delta_j$ as claimed. 

If $i \in P$ and $j \in Z$, since $\delta_i \not = \delta_j$, the claim implies that 
\begin{equation*}
    i < j \iff f(i) < f(j).
\end{equation*}
Suppose that $\delta \not = \delta'$, and let $k$ be the least index where they differ. Exchanging $\delta, \delta'$ and replacing $f$ with $f^{-1}$ if necessary, we can assume that $\delta_k = 1$ and $\delta'_k = 0$. Since the sequences agree before $k$, we have
\begin{equation*}
    \begin{aligned}
        |P \cap \{1, \ldots, k\}| & = |P' \cap \{1, \ldots, k\}| + 1,\\
        |Z \cap \{1, \ldots, k\}| & = |Z' \cap \{1, \ldots, k\}| - 1.
    \end{aligned}
\end{equation*}
The first equality and the bijection $f: P\to P'$ imply that some $i \in P$ satisfies $i \leq k < f(i)$. The second equality and the bijection $f: Z \to Z'$ imply that some $j \in Z$ satisfies $f(j) \leq k < j$. Thus $i < j$ but $f(i) > f(j)$, which is a contradiction.
\end{proof}

\begin{lemma}\label{lem:boundary-vertex}
    If $\delta \in \mathcal D_g$ fails $(*)_{\delta}$, then $P \not = \emptyset$ and there exist a bad cycle $C$ of $\pi_{\delta}$ and an integer $j \in C \cap Z$ such that 
    \begin{equation*}
        t_C(j) = 1 \text{ and } j < \min P.
    \end{equation*}
\end{lemma}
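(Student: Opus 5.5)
If $P=\emptyset$, then every cycle avoids $P$ and is good, so $\mathcal B_\delta=\emptyset$ and $(*)_\delta$ holds vacuously. Hence failure of $(*)_\delta$ forces $P\neq\emptyset$. For the main claim I would work with the subset $\mathcal U\subseteq\mathcal B_\delta$ supplied by Lemma \ref{lem:no-incoming}: it is non-empty and closed under predecessors. Let $S_{\mathcal U}$ be the union of its cycles and set $r_0=\min P$.

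The key step is to find, inside some $C\in\mathcal U$, an element $j\in C\cap Z$ with $t_C(j)=1$. Take any $C\in\mathcal U$. Since $\delta\in\mathcal D_g$, the cycle $C$ meets $Z$, as observed at the start of the section. Write $C$ schematically as $PZ^{a_1}\cdots PZ^{a_r}$ with all $a_i$ even. Since $C\cap Z\neq\emptyset$, some $a_i\ge 2$. Along such a block the orientation starts at $+1$ on the element of $P$ and stays $+1$ when passing to the first element of the $Z$-block, because $\varepsilon=+1$ on $P$. So the first element $j$ of that block lies in $Z$ and satisfies $t_C(j)=t_C(\pi_\delta^{-1}(j))=1$. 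Thus $J\coloneq\{j\in S_{\mathcal U}\cap Z : t_{C(j)}(j)=1\}$ is non-empty.

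Next, take $j\in J$ and any $i\in P$ with $i<j$. Then Definition \ref{defi:cycle_graph} gives an arrow from the cycle of $i$ to $C(j)$, unless the cycle of $i$ equals $C(j)$. In either case the closure property of $\mathcal U$ forces $i\in S_{\mathcal U}$. So every element of $P$ below an element of $J$ lies in $S_{\mathcal U}$.

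I would argue by contradiction, assuming every $j\in J$ exceeds $r_0$. Then $r_0\in S_{\mathcal U}$. One then propagates: $\pi_\delta(r_0)=1$ lies in the same bad cycle, and so on. The natural route is to choose $j=\min J$ and apply the above observation with $\min P<j$ to get a contradiction.

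The step I expect to be the main obstacle is showing that $\min J<r_0$ in general. For this I would take $j\in J$ minimal and examine its preimage $i=\pi_\delta^{-1}(j)$. Since $t(j)=1$, either $i\in P$ with $t(i)=1$, or $i\in Z$ with $t(i)=-1$.

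\begin{itemize}
\item If $i\in Z$, then $j=g+1-(i-\varphi_i)$ is large. Going backwards two steps in a $Z$-run gives an element $i'=\pi_\delta^{-2}(j)$ of $Z$ with $t=+1$, and the decreasing behaviour of $\pi_\delta$ on $Z$ gives $i'<j$ when compared, contradicting minimality. This needs care.
\item If $i\in P$, then $\pi_\delta$ is an order-preserving map $P\to\{1,\dots,|P|\}$, so $j=\varphi_i\le|P|$. I would compare $j$ with $r_0$ using the chain structure, in particular Lemma \ref{lem:run-length-order} and the fact that $\{1,\dots,r_0-1\}\subseteq Z$, to show that the minimal element of $J$ must lie before the first $1$ of $\delta$.
\end{itemize}

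If this direct attempt stalls, the fallback is to rerun the argument of Proposition \ref{prop:bridge}, specifically its proof that $S_0\cap P\neq\emptyset$. That argument used $2r_0\le g+1$ and the comparison $\xi_{r_0}<\xi_j$. I would use the construction of $w$ from $\mathcal U$ given there, with $\xi_j>0$, and read off $j<r_0$ from the constraints of Lemma \ref{lem:LP-explicit}.
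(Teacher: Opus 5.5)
Everything you actually prove is correct: $P\neq\emptyset$, the non-emptiness of $J$, and the fact that every $i\in P$ lying below some element of $J$ belongs to $S_{\mathcal U}$. The gap is that you stop one line short. You then treat the remaining step, $\min J<r_0$, as the main obstacle, and propose a preimage case analysis and a fallback via Proposition \ref{prop:bridge}; neither is needed, and neither is carried out.

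Your contradiction paragraph already finishes the proof. Suppose some $j\in J$ exceeds $r_0$. Then $r_0\in S_{\mathcal U}$, so the cycle $C_0$ of $r_0$ belongs to $\mathcal U$, is bad, and has $t_{C_0}(r_0)=1$. Next, $\pi_\delta(r_0)=\varphi_{r_0}=1$, and $1\in Z$ because $\delta_1=0$; this also gives $1<r_0$. The orientation rule then yields $t_{C_0}(1)=\varepsilon_{r_0}t_{C_0}(r_0)=1$. So $j=1$ and $C=C_0$ satisfy the lemma. The missing observation is simply that $1$ itself lies in $J$ as soon as the cycle of $\min P$ is in $\mathcal U$.

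With this line added, your argument is the paper's proof in contrapositive form. The paper distinguishes two cases. If $C_0$ is bad, it takes $j=1$. If $C_0$ is good, it takes a bad cycle $C$ unreachable from good cycles; any $j\in C\cap Z$ with $t_C(j)=1$ must then satisfy $j<\min P$, since otherwise $C_0\to C$ would be an arrow from a good cycle. Your closed set $\mathcal U$ plays the role of the unreachable cycles.

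You should drop the case analysis entirely. Its first case is not justified as stated. For $i',\pi_\delta(i')\in Z$, formula \eqref{eq:pi-definition} gives $\pi_\delta^2(i')-i'=\varphi_{\pi_\delta(i')}-\varphi_{i'}$. This is $\leq 0$ whenever $\pi_\delta(i')<i'$, so the decreasing behaviour of $\pi_\delta$ on $Z$ does not by itself yield $i'<j$.
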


\begin{proof}
Since $(*)_{\delta}$ fails, $\pi_{\delta}$ must have at least one bad cycle and therefore $P \not = \emptyset$. Let $i_0 = \min P$ and let $C_0$ be the cycle of $i_0$. We have $\pi_{\delta}(i_0) = 1$, and if $C_0$ is bad, then $t_{C_0}(1) = t_{C_0}(i_0) = 1$. Since $\delta_1 = 0$, the integer $j = 1$ has the required properties.

On the other hand, if $C_0$ is good, choose a bad cycle $C$ unreachable from any good cycle. Recall from \eqref{eq:definition_chains} the definition of the chains of $\pi_{\delta}$. Choose such a chain containing an element of $C \cap P$, and let $j$ be the last integer of that chain. The entire chain belongs to $C$, and $t_C$ takes the constant value $1$ on this chain. Thus $j \in C \cap Z$ and $t_C(j) = 1$. If we had $i_0 < j$, then Definition \ref{defi:cycle_graph} would give an arrow $C_0 \to C$, which is a contradiction. As $j \not = i_0$, we obtain $j < i_0$ as desired.
\end{proof}

\begin{proposition}\label{prop:failure-sandwich}
    For every $g \geq 3$,
    \begin{equation*}
        W_{g-2} \leq f_g \leq W_{g+1}.
    \end{equation*}
\end{proposition}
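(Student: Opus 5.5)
The plan is to prove both inequalities by constructing explicit injections. The bijection $\delta \mapsto c(\delta)$ of Proposition \ref{prop:composition-parametrization} is used throughout to translate statements about increment sequences into statements about compositions.

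\emph{Translating the key lemma into compositions.} Take $c(\delta) = (a_1,\ldots,a_s)$. Reading $\delta(c)$ row by row from the top, the integers smaller than $\min P$ are exactly the $\iota_c$-images of the tops $(j,a_j-1)$ of the columns of maximal height $M = \max_j a_j$. Under this dictionary, Lemma \ref{lem:boundary-vertex} says the following: if $(*)_\delta$ fails, some chain $T_{j_0}$ with $a_{j_0} = M$ ends in an integer $j$ which lies in a bad cycle $C$ and satisfies $t_C(j) = 1$. Conversely, the tops of maximal columns are the only candidates for such a $j$.

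\emph{Upper bound $f_g \leq W_{g+1}$.} For a failing $\delta$, let $j_0$ be the smallest index given above. I map $\delta$ to the composition $(a_{j_0}+1, a_{j_0+1},\ldots,a_s,a_1,\ldots,a_{j_0-1})$ of $g+1$. Its first part is strictly larger than all other parts, so it lies in $\mathcal W_{g+1}$.

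\begin{itemize}
\item Recovering the multiset of parts and the cyclic order is immediate. What is lost is the rotation amount, i.e.\ the value of $j_0$.
\item To show injectivity, suppose two failing sequences $\delta,\delta'$ give the same image. Their compositions are then cyclic rotations of one another, both shifted to start at a maximal column.
\item I would build a bijection $f$ matching chains with equal positions in the rotated lists. Then I would check that $f \circ \pi_\delta = \pi_{\delta'} \circ f$ on the relevant cycles and that $f$ preserves $\delta$-labels, so that Lemma \ref{lem:cycle-rigidity} forces $\delta = \delta'$.
\item If this conjugation only holds partially, the fallback is to refine the choice of $j_0$ (for instance the first maximal column whose chain lies in an unreachable bad cycle), so that $j_0$ can be read back from the cycle structure.
\end{itemize}

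\emph{Lower bound $W_{g-2} \leq f_g$.} Starting from $b = (b_1,\ldots,b_s) \in \mathcal W_{g-2}$, I append two units to obtain a composition $c$ of $g$. My first attempt is $c = (b_1+1, b_2, \ldots, b_s, 1)$, which makes the first column strictly maximal; other placements of the two units are alternatives. I then set $\delta = \delta(c)$ and want $(*)_\delta$ to fail.

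\begin{itemize}
\item The goal is to show that the cycle of $\pi_\delta$ through $1$ is bad. Since $\pi_\delta(\min P) = 1$, this cycle contains the unique maximal column's chain.
\item Next I show that no arrow enters that cycle: its $Z$-elements with $t_C = 1$ should all be smaller than every element of $P$ outside the cycle.
\item If $\{C\}$ alone is not closed under incoming arrows, I would take instead a closed set $\mathcal U$ of bad cycles, as in Lemma \ref{lem:no-incoming}.
\item Injectivity is automatic, because $b \mapsto c$ is injective and Proposition \ref{prop:composition-parametrization} is a bijection.
\end{itemize}

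\emph{Main obstacle.} The hardest step is verifying the parity and orientation conditions of Definition \ref{def:bad-cycle} for the constructed $\delta$. For the lower bound, this means choosing where to insert the two extra units so that every run of $Z$-elements between consecutive $P$-elements in the cycle has even length. I would first work out small cases ($g = 5,6$, checked against Tables \ref{tab:g7-increment-words} and \ref{tab:g8-increment-words}) to pin down the correct padding before writing the general argument.
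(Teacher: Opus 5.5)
\textbf{Lower bound.} Your lower bound is exactly the paper's construction, and the parity check you defer is immediate. Take $c=(b_1+1,b_2,\ldots,b_s,1)$. Then $1$ (the top of the unique tallest column) and $g$ (the appended singleton column) both lie in $Z$, so $\pi_\delta(1)=g$ and $\pi_\delta(g)=\varphi_g+1$. Hence the cycle through $1$ is the first chain followed by $(g)$. It has exactly two elements of $Z$, with $t_C(1)=1$ and $t_C(g)=-1$, so it is bad. Since $1$ is its only positively oriented element of $Z$, no arrow can enter it, and no other placement of the two units is needed.

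\textbf{Upper bound: the gap.} The upper bound fails in two ways.

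First, your dictionary is wrong. The integers below $\min P$ also include the tops of columns of height $M-1$ lying to the left of the first maximal column. For $\delta=\mathtt{00101101}$ one gets $c(\delta)=(2,3,1,2)$ and $\pi_\delta=(1,8,4,6,3)(2,7,5)$. The only chain of length $3$, namely $(6,3,1)$, lies in the good cycle. The unreachable bad cycle $(2,7,5)$ is witnessed only by $j=2$, the top of the chain $(5,2)$. So your $j_0$ need not exist.

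Second, the rotation map is not injective where it is defined. Take $\delta=\mathtt{01000}$ and $\delta'=\mathtt{00100}$, with $c(\delta)=(2,1,1,1)$, $c(\delta')=(1,2,1,1)$, $\pi_\delta=(1,5,2)(3,4)$ and $\pi_{\delta'}=(1,5,2,4,3)$. Both fail condition $(*)$. Each has a unique column of maximal height, so no refinement of the choice of $j_0$ among maximal columns helps. Both are sent to $(3,1,1,1)$. Since the cycle types differ, no $f$ as in Lemma \ref{lem:cycle-rigidity} can exist. In general, rotating $c(\delta)$ changes $\pi_\delta$ globally, so the rotation class of $c(\delta)$ together with a marked maximal part does not determine $\delta$.

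\textbf{The missing idea.} You need to encode the bad cycle itself, not a rotation of $c(\delta)$. The paper takes the minimal $j\in Z$ with $j<\min P$ lying in a bad cycle $C$ with $t_C(j)=1$. It then reads the chain lengths of $C$ along $j,\pi_\delta^{-1}(j),\pi_\delta^{-2}(j),\ldots$.

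Badness forces the pattern $(\ell_1,1,\ell_2,1,\ldots,\ell_q,1)$. Indeed, suppose the chain ending at a positively oriented $z_i$ has length $\ell_i$. Then $y_i=\pi_\delta^{-\ell_i}(z_i)$ lies in $Z$ with $t_C(y_i)=-1$, so $\pi_\delta^{-1}(y_i)\notin P$. Hence $y_i$ is a singleton chain, and $\pi_\delta^{-1}(y_i)$ is the next positively oriented $z_{i+1}$.

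The image is $(\ell_1+1,\ldots,\ell_q+1,1,c(\eta))$, where $\eta$ is the restriction of $\delta$ to the complement of $C$. On that complement, $\pi_\delta$ acts as $\pi_\eta$ through the increasing bijection. Because every $\ell_i+1\geq 2$, the first $1$ marks where $C$ ends. The image therefore determines a label-preserving conjugation between $\pi_\delta$ and $\pi_{\delta'}$, which is exactly what Lemma \ref{lem:cycle-rigidity} needs.

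Minimality of $j$ together with $j<\min P$ shows that $\ell_1+1$ is at least every part, but only weakly. In the $g=8$ example above, $\ell_1=M-1$ and the image is $(3,1,3,2)$. This is why the target is $\mathcal W_{g+1}$ rather than compositions with strictly maximal first part.
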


\begin{proof}
For the lower bound, we construct an injective map from $\mathcal W_{g-2}$ into the subset of $\mathcal D_{g}$ consisting of those $\delta$ failing $(*)_{\delta}$. Write a composition in $\mathcal W_{g-2}$ in the form $(a, R)$, where $a$ is its first component and $R$, potentially empty, is a composition of $g-2-a$. We consider
\begin{equation*}
    c \coloneq (a+1, R, 1), \qquad \delta \coloneq \delta(c).
\end{equation*}
Let us prove that $(*)_{\delta}$ fails. Recall the map $\iota = \iota_{c} : \mathcal H(c) \xrightarrow{\sim} \{1, \ldots, g\}$ defined in \eqref{eq:map_iota_c}. Since every component of $R$ is at most $a$, the first column of $\mathcal H(c)$ is the unique tallest column. The top of this column is $(1,a)$ and therefore satisfies $\iota(1,a) = 1$. The last column has height $1$, it consists of the pair $(g-\varphi_g, 0)$, where $\varphi$ is the elementary sequence of $\delta$, and it satisfies $\iota(g - \varphi_g, 0) = g$. Since both pairs are assigned a $0$ in $\mathcal H(c)$, we have $1, g \in Z$, and therefore \eqref{eq:pi-definition} gives
\begin{equation*}
    \pi_{\delta}(1) = g, \qquad \pi_{\delta}(g) = \varphi_g+1.
\end{equation*}
Now, by the proof of Proposition \ref{prop:composition-parametrization}, the equality $\iota(1,a) = 1$ means that $\pi_{\delta}^a(\varphi_g + 1) = 1$. Thus, the subset 
\begin{equation*}
    C \coloneq \{1, g, \varphi_g +1, \pi_{\delta}(\varphi_g + 1), \ldots , \pi_{\delta}^{a-1}(\varphi_g+1)\} \subseteq \{1, \ldots, g\},
\end{equation*}
forms a cycle of $\pi_{\delta}$. It is the union of the first chain $T_1 = (\pi_{\delta}^{h}(\varphi_g + 1))_{0 \leq h \leq a}$ and of the last chain $T_{g - \varphi_g} = (g)$. Thus $C$ contains $a$ elements of $P$ and precisely two elements of $Z$, namely $1$ and $g$. We define an orientation on $C$ by assigning $+1$ on $C \cap P$ and at $1$, and $-1$ at $g$. Clearly, $C$ equipped with this orientation is a bad cycle. Its only element of $Z$ with orientation $+1$ is the integer $1$, therefore no arrow of $\Gamma_{\delta}$ can point to $C$. It follows that $(*)_{\delta}$ fails.

By Proposition \ref{prop:composition-parametrization}, one can recover $c$ from $\delta$, and then recover the initial composition $(a, R)$ by removing the last component and decreasing the first one by $1$, justifying the injectivity. This proves the lower bound. 

For the upper bound, we construct an injective map from the set of sequences $\delta \in \mathcal D_g$ failing $(*)_{\delta}$ into $\mathcal W_{g+1}$. Fix such a sequence $\delta$ with associated elementary sequence $\varphi$. Choose the minimal integer $j \in Z$ such that $j < \min P$, the cycle $C$ containing $j$ is bad, and $t_C(j) = 1$. Such an integer exists by Lemma \ref{lem:boundary-vertex}.

Let $m = |C|$ denote the cardinality of the cycle. We order the elements of $C$ as follows 
\begin{equation}\label{eq:order_on_the_cycle}
    j, \ \pi_{\delta}^{-1}(j), \ \pi_{\delta}^{-2}(j), \ \ldots, \ \pi_{\delta}^{-(m-1)}(j).
\end{equation}
Let $z_1, \ldots, z_q$ denote the elements $z \in C \cap Z$ such that $t_C(z) = 1$, numbered as they appear in the order specified above. In particular $z_1 = j$. Recall the function $r$ defined in \eqref{eq:chain-position}. For every $1 \leq i \leq q$, we consider 
\begin{equation*}
    \ell_i \coloneq r(z_i) + 1.
\end{equation*}
Thus $\ell_i$ is the cardinality of the chain of $\pi_{\delta}$ ending at $z_i$. For every $i$, consider the integer $y_i \coloneq \pi_{\delta}^{-\ell_i}(z_i)$. Then $\pi_{\delta}(y_i) = \varphi_g + k$ for some $1 \leq k \leq g - \varphi_g$, thus we must have $y_i \in Z$. Moreover, the orientation $t_C$ is constant equal to $1$ along the chain containing $z_i$, so \eqref{eq:orientation} gives $t_C(y_i) = -1$. The integer $\pi_{\delta}^{-1}(y_i)$ can not belong to $P$, as otherwise we would have $t_C(\pi_{\delta}^{-1}(y_i)) = -1$, contradicting the fact that $C$ is bad. Therefore $\pi_{\delta}^{-1}(y_i) \in Z$, and the chain ending at $y_i$ consists of $y_i$ alone. Moreover $t_C(\pi_{\delta}^{-1}(y_i)) = 1$, so that we actually have $\pi_{\delta}^{-1}(y_i) = z_{i+1}$, where we put $z_{q+1} \coloneq z_1$. 

Consequently, if we write down the list of cardinalities of the successive chains of $\pi_{\delta}$ within $C$, starting at $j$ and proceeding in the ordered specified in \eqref{eq:order_on_the_cycle}, we obtain
\begin{equation}\label{eq:bad-run-representative}
    (\ell_1, 1, \ell_2, 1, \ldots, \ell_q, 1), \qquad m = \sum_{i=1}^q (\ell_i+1).
\end{equation}
If $\ell_i > \ell_1$, Lemma \ref{lem:run-length-order} would give $z_i < j < \min P$, contradicting the choice of $j$. Thus $\ell_i \leq \ell_1$ for every $i$.

Furthermore, every $i \in P$ satisfies $i>j$, hence $r(i) \leq r(j) = \ell_1-1$. For any chain $T_k$ of length $a_k \geq 2$, if $z \in Z$ denotes its last integer, we have $\pi_{\delta}^{-1}(z)\in P$ and
\begin{equation*}
    a_k - 2 = r(\pi_{\delta}^{-1}(z)) \leq \ell_1-1.
\end{equation*}
Therefore every part of the composition $c(\delta)$, as defined in \eqref{eq:chain-composition}, is at most $\ell_1+1$.

Let us now consider the complement $E \coloneq \{1, \ldots, g\} \setminus C$ of the bad cycle $C$, and write $n \coloneq |E|$. Suppose first that $n > 0$, and write $e_1 < \ldots < e_n$ for the elements of $E$ ordered increasingly. Since $E$ is stable by $\pi_{\delta}$, there is a unique permutation $w \in \mathfrak S_n$ such that
\begin{equation}\label{eq:identity_pi_w}
    \forall 1 \leq k \leq n, \qquad \pi_{\delta}(e_k) = e_{w(k)}.
\end{equation}
Define $\eta = (\eta_1, \ldots, \eta_n)$ by the formula $\eta_k \coloneq \delta_{e_k}$. If $\eta_1 = 1$, then $e_1 \in P$ and $\pi_{\delta}(e_1) < e_1$, contradicting the minimality of $e_1$ in $E$. Thus $\eta \in \mathcal D_n$.

Let $P_{\eta}$ and $Z_{\eta}$ be the subsets of $\{1, \ldots, n\}$ defined by $\eta$. Since the indices $e_k$ are increasing, the equation \eqref{eq:identity_pi_w} implies that $w$ is increasing on $P_{\eta}$, decreasing on $Z_{\eta}$, and satisfies $w(x) < w(y)$ for every $x \in P_{\eta}$ and $y \in Z_{\eta}$. Since $w$ is a bijection, it follows that
\begin{equation*}
    w(P_{\eta}) = \{1, \ldots, |P_{\eta}|\}, \qquad w(Z_{\eta}) = \{|P_{\eta}|+1, \ldots, n\}.
\end{equation*}
Together with the stated monotonicity, these properties characterize $\pi_{\eta}$ by \eqref{eq:pi-definition}. Therefore we have $w = \pi_{\eta}$, and thus $\pi_{\delta}(e_k) = e_{\pi_{\eta}(k)}$ for all $1 \leq k \leq n$. 

The chains of $\pi_{\eta}$ in $\{1, \ldots, n\}$, as defined in \eqref{eq:definition_chains}, correspond bijectively to the chains of $\pi_{\delta}$ contained in $E$. In particular, they have the same respective lengths. Therefore, if we write $R \coloneq c(\eta)$ as defined in \eqref{eq:chain-composition}, then the components of $R$ are at most $\ell_1 + 1$ and their sum is $n$.

If $n=0$, we take $R$ to be empty instead.

Finally, we associate to $\delta$ the composition
\begin{equation}\label{eq:upper-composition}
    (\ell_1+1, \ \ell_2+1, \ \ldots, \ \ell_q+1, \ 1, \ R).
\end{equation}
Its components sum to $m + 1 + n = g+1$. Since $\ell_i \leq \ell_1$ and every entry of $R$ is at most $\ell_1 + 1$, it belongs to $\mathcal W_{g+1}$.

It remains to prove injectivity of this construction. Suppose that $\delta, \delta' \in \mathcal D_g$ give the same composition \eqref{eq:upper-composition}. We denote every object introduced above relative to $\delta'$ with the same symbols primed, such as $C'$, $j'$, $E'$, etc. The first occurrence of $1$ in the composition \eqref{eq:upper-composition} completely determines $q$, $\ell_1, \ldots, \ell_q$ and $R$, so these data agree for both $\delta$ and $\delta'$. In particular,
\begin{equation*}
    |C| = |C'| = m, \qquad m = \sum_{i=1}^q (\ell_i+1),
\end{equation*}
and both complements $E$ and $E'$ have size $n = g-m$.

By the construction of the chains within $C$, the sequence $\left( \delta_{\pi_{\delta}^{-\nu}(j)} \right)_{0 \leq \nu < m}$ is the concatenation, for $i = 1, \ldots, q$, of the blocks
\begin{equation*}
    (0, \underbrace{1, \ldots, 1}_{\ell_i-1}, 0).
\end{equation*}
The same holds for $C'$, starting at $j'$. Consequently,
\begin{equation*}
    \forall 0 \leq \nu < m, \qquad \delta_{\pi_{\delta}^{-\nu}(j)} = \delta'_{\pi_{\delta'}^{-\nu}(j')}.
\end{equation*}
If $n > 0$, since both $\delta$ and $\delta'$ produce the same composition $R$ of $n$, by Proposition \ref{prop:composition-parametrization} we have
\begin{equation*}
    \eta = \delta(R) = \eta'.
\end{equation*}

Define a function $f: \{1, \ldots, g\} \to \{1, \ldots, g\}$ by the formula
\begin{align*}
    \forall 0 \leq \nu < m, & & f(\pi_{\delta}^{-\nu}(j)) & = \pi_{\delta'}^{-\nu}(j'),\\
    \forall 1 \leq k \leq n, & & f(e_k) &  = e'_k.
\end{align*}
The second formula is omitted when $n = 0$. These formulas define a bijection, sending $C$ onto $C'$ and $E$ onto $E'$. Moreover, we have checked that $\delta'_{f(i)} = \delta_i$ for every $1 \leq i \leq g$.

Lastly, the first formula defining $f$ actually holds for every integer $\nu$, since both cycles $C$ and $C'$ have length $m$. Thus
\begin{equation*}
    f \left( \pi_{\delta} (\pi_{\delta}^{-\nu}(j)) \right) = \pi_{\delta'}^{1-\nu}(j') = \pi_{\delta'} \left( f(\pi_{\delta}^{-\nu}(j)) \right).
\end{equation*}
On the complement $E$, the equation \eqref{eq:identity_pi_w} for the two sequences and the equality $\eta = \eta'$ give
\begin{equation*}
    f(\pi_{\delta}(e_k)) = f(e_{\pi_{\eta}(k)}) = e'_{\pi_{\eta}(k)} = \pi_{\delta'}(e'_k) =\pi_{\delta'}(f(e_k)).
\end{equation*}
Hence $f\circ \pi_{\delta} = \pi_{\delta'} \circ f$. By Lemma \ref{lem:cycle-rigidity}, we obtain $\delta = \delta'$, and this concludes the proof.
\end{proof}

\subsection{Some estimates for $N_g$}

For the numbers $W_n$, we have the following easy estimates.

\begin{lemma}\label{lem:Wn-asymptotic-order}
    For every $n \geq 1$, we have
    \begin{equation*}
        \frac{2^n-1}{n} \leq W_n \leq \frac{2^{n+1}-2}{n+1}.
    \end{equation*}
\end{lemma}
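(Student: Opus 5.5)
The natural approach is to use cyclic rotations of compositions. Let $\mathcal C_n$ be the set of all compositions of $n$, so that $|\mathcal C_n| = 2^{n-1}$. The cyclic group $\mathbb Z/s$ acts on compositions with $s$ parts by rotation. Since every orbit contains a rotation whose first part is maximal, $\mathcal W_n$ meets every orbit. The plan is to compare $W_n$ with $\sum_s |\{\text{compositions with } s \text{ parts}\}|/s$, or better, to use a marked-point (cyclic lemma) argument, which gives cleaner bounds.

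For the lower bound, I would encode a composition of $n$ as a binary word, namely a subset of the $n-1$ gaps between $n$ points on a line. Then I pass to the cyclic picture: consider the $2^n - 1$ nonempty subsets $B$ of $\mathbb Z/n$, viewed as cut points on a circle of $n$ unit arcs. Each such $B$ determines a cyclic composition of $n$. For each $B$, I take the $n$ rotations $B + t$ and linearize each by cutting at $0$, keeping only those with $0 \in B+t$. This produces a composition of $n$ whose parts are the cyclic gaps read starting from $0$. The key step is to choose $t$ so that the point $0$ begins a maximal gap. This defines a map from nonempty subsets of $\mathbb Z/n$ to pairs (element of $\mathcal W_n$, rotation), with each element of $\mathcal W_n$ hit at most $n$ times, once for each rotation $t$. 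That gives $2^n - 1 \leq n W_n$. More precisely, the map $(B, \text{choice of a maximal gap start } b \in B) \mapsto$ (composition read from $b$, shift $b$) is injective into $\mathcal W_n \times \mathbb Z/n$. Since every nonempty $B$ admits at least one such $b$, the lower bound follows.

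For the upper bound, I would inject $\mathcal W_n \times \{1, \ldots, n+1\}$ into nonempty subsets of $\mathbb Z/(n+1)$ that are not the whole set, of which there are $2^{n+1} - 2$. Given $(a, R) \in \mathcal W_n$, I append a part $1$ to obtain a composition of $n+1$ whose first part is strictly greater than $1$ unless everything is $1$. Placing it on a circle of length $n+1$ starting at position $t$ gives a subset, and the shift $t$ should be recoverable. The main obstacle is recovering $t$ when several gaps tie for maximal. I would instead aim for the weaker but sufficient statement that the rotation orbit of the appended composition has size exactly $s+1$, where $s$ is the number of parts, and that the first-part-maximal representatives number at most $s$ within that orbit. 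I would then count orbits weighted by $n+1$ positions. If this tie issue blocks injectivity, the fallback is to bound $W_n \le \sum_s \binom{n-1}{s-1}\cdot \frac{s'}{s}$ using that the proportion of rotations with maximal first part is at most (number of maximal parts)$/s$, combined with the append-$1$ trick. The append-$1$ trick guarantees the appended part is never tied with a maximal first part $\ge 2$, and that is where the denominator $n+1$ comes from.
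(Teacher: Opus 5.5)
\textbf{The lower bound is correct; the upper bound has a genuine gap.}

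Your lower bound works. For each nonempty $B\subseteq\mathbb{Z}/n$, choose a point $b\in B$ that starts a longest gap, and send $B$ to the pair (gaps read from $b$, $b$). This map into $\mathcal{W}_n\times\mathbb{Z}/n$ is injective, because $B$ is recovered as $b$ plus the partial sums. It is an injective form of the paper's orbit count, and it avoids the identity $\frac{1}{r}\binom{n-1}{r-1}=\frac{1}{n}\binom{n}{r}$.

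The append-$1$ construction for the upper bound cannot be repaired the way you suggest. Appending a part $1$ does not break ties between the first part $a$ and parts of $R$ equal to $a$, and such ties destroy injectivity.
\begin{itemize}
\item Your ``weaker but sufficient'' claim is false. For example, $(2,1,2)\in\mathcal{W}_5$ becomes $(2,1,2,1)$, whose rotation orbit has size $2$, not $s+1=4$. Correspondingly, its cut sets in $\mathbb{Z}/6$ at shifts $t$ and $t+3$ coincide.
\item Different compositions also collide. Both $(2,1,2,1)$ and $(2,1,1,2)$ lie in $\mathcal{W}_6$, yet $(2,1,2,1,1)$ at shift $0$ and $(2,1,1,2,1)$ at shift $3$ both give the cut set $\{0,2,3,5,6\}\subseteq\mathbb{Z}/7$.
\item The composition $(1,\dots,1)$ would give all of $\mathbb{Z}/(n+1)$, which is not a proper subset.
\end{itemize}
The fallback is not an argument either. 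Averaging over rotations gives exactly $W_n=\sum_c m(c)/s(c)$, where $m(c)$ is the number of maximal parts and $s(c)$ the number of parts. Bounding this sum by $(2^{n+1}-2)/(n+1)$ is precisely what remains to be proved.

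The missing idea is to enlarge the first part instead of appending a part. Send $(c_1,\dots,c_r)\in\mathcal{W}_n$ to $(c_1+1,c_2,\dots,c_r)$, a composition of $n+1$ whose first part is \emph{strictly} larger than all the others.
\begin{itemize}
\item Placed on $\mathbb{Z}/(n+1)$ at shift $t$, it gives a cut set of size $r\in\{1,\dots,n\}$, so the set is nonempty and proper.
\item This cut set has a unique longest gap, and that gap starts at $t$, so $t$ is recovered.
\item Reading the gaps from $t$ and subtracting $1$ from the first one recovers $c$.
\end{itemize}
Hence $\mathcal{W}_n\times\mathbb{Z}/(n+1)$ injects into the $2^{n+1}-2$ nonempty proper subsets of $\mathbb{Z}/(n+1)$, which is the required bound. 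This is exactly the paper's encoding of $c$ by the binary word $1\,0^{c_1}\,1\,0^{c_2-1}\cdots 1\,0^{c_r-1}$, whose $n+1$ rotations are distinct non-constant words.
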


\begin{proof}
Define an equivalence relation $\sim$ on the set of compositions of $n$, by declaring that $c \sim c'$ if and only if $c$ can be obtained from $c'$ after a cyclic rotation of its components. More precisely, if $c = (c_1, \ldots, c_r)$ and $c' = (c'_1, \ldots, c'_{r'})$, then $c \sim c'$ if and only if $r = r'$ and there exists some $k$ such that $c_i = c'_{k+i}$ for all $1 \leq i \leq r$, where the indices are read modulo $r$. The equivalence class of a given $c = (c_1, \ldots, c_r)$ is of cardinality at most $r$, and it contains at least one element of $\mathcal W_n$. Since there are exactly $\binom{n-1}{r-1}$ compositions of $n$ with $r$ components, summing over $r$ gives
\begin{equation*}
    W_n \geq \sum_{r=1}^n \frac{1}{r}\binom{n-1}{r-1} = \frac{1}{n} \sum_{r=1}^n \binom{n}{r} = \frac{2^n-1}{n}.
\end{equation*}

For the upper bound, fix $c = (c_1, \ldots, c_r) \in \mathcal W_n$. Replace the first component $c_1$ by the sequence $1\ 0^{c_1}$, and all the remaining components $c_i$ by $1\ 0^{c_i-1}$ for $2 \leq i \leq r$, where $0^m$ means a sequence of $m$ successive $0$'s. This produces a non-constant sequence $\beta \in \{0, 1\}^{n+1}$. Since $c_1 > c_i-1$ for every $2 \leq i \leq r$, the equivalence class of $\beta$ under cyclic rotations of its components determines $\beta$ itself. Besides, the $n+1$ rotations of $\beta$ are all distinct sequences in $\{0, 1\}^{n+1}$, and the assignment mapping $c$ to the equivalence class of $\beta$ is injective on $\mathcal W_n$. We obtain $(n+1) W_n \leq 2^{n+1} -2$, which is the desired upper bound.
\end{proof}

\begin{proof}[Proof of Theorem \ref{thm:asymptotic-count}]
Proposition \ref{prop:failure-sandwich} and Lemma \ref{lem:Wn-asymptotic-order} give, for every $g \geq 3$,
\begin{equation*}
    \frac{2^{g-2}-1}{g-2} \leq f_g \leq \frac{2^{g+2}-2}{g+2}.
\end{equation*}
Dividing by $2^{g-1}$, we obtain
\begin{equation*}
    \frac{1 - 2^{2-g}}{2(g-2)} \leq 1 - \frac{N_g}{2^{g-1}} = \frac{f_g}{2^{g-1}} \leq \frac{8 - 2^{2-g}}{g+2}.
\end{equation*}
Letting $g$ go to infinity, this yields $N_g \sim 2^{g-1}$ as desired.
\end{proof}

\begin{remark}\label{rem:Knopfmacher-Robbins}
    The inequalities of Theorem \ref{thm:asymptotic-count} show that $f_g = \Theta(W_g)$ as $g$ goes to infinity. The author does not know whether the sharper estimate $f_g \sim W_g$ holds. We point out, however, that an asymptotic estimate of $W_n$ is known. Indeed, Theorem 2 of \cite{knopfmacher_compositions_2005} states that 
    \begin{equation*}
        W_n \sim \frac{2^n}{n\log(2)} \left( 1 + \varepsilon(\log_2 n) \right),
    \end{equation*}
    where $\varepsilon$ (denoted $\delta$ in \cite{knopfmacher_compositions_2005}) is an explicit continuous, non-constant periodic function of period $1$, with mean zero and ``small amplitude''. In fact, the amplitude of $\varepsilon$ is less than $10^{-5}$. 
\end{remark}

\section{An application to unitary Ekedahl-Oort strata} \label{sec:unitary-application}

Let $k$ be an algebraically closed field of characteristic $p$, and fix an imaginary quadratic field $E$ in which $p$ is inert. For $a \geq b \geq0$ with $a+b > 0$, write $\mathcal M(a,b)$ for the base change to $k$ of the special fiber at $p$ of a hyperspecial integral model of a unitary PEL Shimura variety associated with $E$ and of signature $(a,b)$, as in Section 2.1 of \cite{andrewsEkedahlOortStrata$mathsfGUq22$2025a}. We denote its supersingular locus by $\mathcal M(a,b)^{\mathrm{ss}}$.

Let $T$ be a unitary $\mathrm{BT}_1$-module of signature $(a,b)$, as defined in \cite{andrews_classification_2026} Definition 4.1, and denote its EO stratum by $\mathcal M(a,b)_T$. Forgetting the action gives a polarized $\mathrm{BT}_1$-module isomorphic to $M_{\delta} \otimes_{\mathbb F_p} k$ for a unique $\delta \in \{0,1\}^{a+b}$. Theorem \ref{thm:main} gives
\begin{equation*}
    \mathcal M(a,b)_T \cap \mathcal M(a,b)^{\mathrm{ss}} \not = \emptyset \implies (*)_{\delta}.
\end{equation*}
Indeed, the underlying $p$-divisible group of a supersingular point is a principally quasi-polarized supersingular lift of $M_{\delta}$, as already argued in \cite{andrewsEkedahlOortStrata$mathsfGUq22$2025a} Section 5.1. 

\begin{corollary}\label{cor:unitary-arbitrary}
    For $a \geq b \geq 1$, at least $N_b$ distinct EO strata of $\mathcal M(a,b)$ meet its supersingular locus.
\end{corollary}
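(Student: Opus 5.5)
The plan is to construct, for each $\delta\in\{0,1\}^b$ satisfying $(*)_{\delta}$, a supersingular point of $\mathcal M(a,b)$. We do this through the Serre tensor construction and the forgetful functor of \cite{andrews_classification_2026} and \cite{andrewsEkedahlOortStrata$mathsfGUq22$2025a}. Then we show that the resulting unitary EO strata are pairwise distinct.

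\textbf{Construction.} By Theorem \ref{thm:main}, condition $(*)_{\delta}$ gives a principally quasi-polarized supersingular $p$-divisible group $X_\delta$ of height $2b$ with $\mathbb D(X_\delta[p])\simeq M_\delta\otimes k$. Applying the Serre tensor construction $\mathcal O_E\otimes_{\mathbb Z_p}X_\delta$ yields a $p$-divisible group with $\mathcal O_E\otimes\mathbb Z_p=\mathbb Z_{p^2}$-action of signature $(b,b)$. It carries a compatible principal quasi-polarization, built from the polarization of $X_\delta$ and the trace form of $\mathbb Z_{p^2}$ twisted to be hermitian. It is supersingular, since all its slopes remain $1/2$.

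To reach signature $(a,b)$, we take the direct sum with a fixed supersingular unitary $p$-divisible group $Y$ of signature $(a-b,0)$. For $p$ inert such an object exists and is unique; it is isoclinic of slope $1/2$, essentially the étale-twisted ``superspecial'' block. The sum is a principally quasi-polarized supersingular unitary $p$-divisible group of signature $(a,b)$. To realize it as a point of $\mathcal M(a,b)^{\mathrm{ss}}$, we use the unitary analogue of Harashita's lifting. A cleaner route is the uniformization of the supersingular locus by the basic Rapoport--Zink space, combined with the fact that the basic locus of $\mathcal M(a,b)$ is non-empty, so every quasi-isogeny class of such objects occurs at a point. Its EO stratum $T_\delta$ is the isomorphism class of its $p$-torsion, namely $(\mathcal O_E\otimes M_\delta)\oplus Y[p]$.

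\textbf{Injectivity of $\delta\mapsto T_\delta$.} We apply the forgetful functor, as in the paragraph preceding the corollary. The polarized $\mathrm{BT}_1$-module underlying $T_\delta$ is $M_\delta^{\oplus2}\oplus Y[p]$, where the second copy of $M_\delta$ is a Frobenius twist, which is isomorphic over $k$. It therefore suffices to show that $M_\delta^{\oplus 2}\oplus N$ determines $M_\delta$ for a fixed $N$. Krull--Schmidt for $\mathrm{BT}_1$-modules (Kraft's classification into indecomposable words) lets us cancel the fixed summand $N$. The multiplicities in $M_\delta^{\oplus2}$ are then twice those of $M_\delta$, so $M_\delta$ is recovered, and hence $\delta$, by Oort's classification. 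Distinct $\delta$ with $(*)_\delta$ therefore give distinct unitary EO strata meeting the supersingular locus, which yields at least $N_b$ of them.

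\textbf{Main obstacle.} The hardest step is the precise bookkeeping of the Serre tensor construction. We must check that the result has signature exactly $(b,b)$, that the hermitian polarization is principal, and that the extra block $Y$ exists. If the direct-sum approach is awkward, the fallback is to cite directly the maps between EO strata of $\mathcal A_b$ and $\mathcal M(a,b)$ constructed in \cite{andrewsEkedahlOortStrata$mathsfGUq22$2025a}. Such a map sends supersingular points to supersingular points, and composing with the forgetful map shows it is injective on indices.
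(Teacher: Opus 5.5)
Your proposal is correct and follows essentially the paper's route. The paper takes points $(A_\delta,\lambda_\delta)\in\mathcal A_{b,\delta}(k)\cap\mathcal S_b(k)$ and a point $C\in\mathcal M(a-b,0)(k)$, forms $(\mathcal O_E\otimes_{\mathbb Z}A_\delta)\times C$, realizes its $p$-divisible group at a supersingular point of $\mathcal M(a,b)$ by Theorem 1.6(2) of \cite{viehmann_ekedahloort_2013} (this is the ``unitary Harashita lifting'' you invoke), and separates the strata using $\mathbb D(B_\delta[p])\simeq (M_\delta\otimes_{\mathbb F_p} k)^{\oplus 2}\oplus\mathbb D(C[p])$ and Krull--Schmidt. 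The only differences are cosmetic: you work directly with $p$-divisible groups from condition (i) of Theorem \ref{thm:main} and an abstract block $Y$, and you cancel with unpolarized Krull--Schmidt plus the fact that $\delta$ is an invariant of the unpolarized $\mathrm{BT}_1$-module, whereas the paper uses the polarized decomposition of \cite{andrews_classification_2026} Proposition 3.11.
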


\begin{proof}
Assume first that $a > b$. Choose $C \in \mathcal M(a-b, 0)(k)$. Every such point is supersingular by \cite{andrewsEkedahlOortStrata$mathsfGUq22$2025a} Paragraph 4.3.1. For each $\delta \in \mathcal D_b$ satisfying $(*)_{\delta}$, choose
\begin{equation*}
    (A_{\delta}, \lambda_{\delta}) \in \mathcal A_{b,\delta}(k) \cap \mathcal S_b(k),
\end{equation*}
and let us consider $Y_{\delta} \coloneq (\mathcal O_E \otimes_{\mathbb Z} A_{\delta}) \times C$. The Serre tensor construction of \cite{andrews_classification_2026} Section 4.5, and the product maps of \cite{andrewsEkedahlOortStrata$mathsfGUq22$2025a} Section 4, equips $Y_{\delta}[p^{\infty}]$ with a unitary PEL structure of signature $(a, b)$. By Theorem 1.6 (2) of \cite{viehmann_ekedahloort_2013}, there is a point $B_{\delta} \in \mathcal M(a,b)(k)$ whose $p$-divisible group, with its additional structures, is isomorphic to $Y_{\delta}[p^{\infty}]$. Since the underlying abelian variety of $Y_{\delta}$ is isomorphic to $A_{\delta}^2\times C$, the point $B_{\delta}$ is supersingular. By \cite{andrews_classification_2026} Lemma 4.27, its underlying polarized $\mathrm{BT}_1$-module is
\begin{equation*}
    \mathbb D(B_{\delta}[p]) \simeq \left( M_{\delta} \otimes_{\mathbb F_p} k \right)^{\oplus 2} \oplus \mathbb D(C[p]).
\end{equation*}
By the uniqueness of the decomposition of polarized $\mathrm{BT}_1$-modules into a direct sum of indecomposable summands, as recalled in \cite{andrews_classification_2026} Proposition 3.11, any two distinct $\delta \not = \delta'$ produce two non-isomorphic polarized $\mathrm{BT}_1$-modules $\mathbb D(B_{\delta}[p]) \not \simeq \mathbb D(B_{\delta'}[p])$. Therefore, the $N_b$ choices of $\delta$ give distinct unitary EO strata, all meeting the supersingular locus. When $a = b$, the same argument applies directly to $Y_{\delta} = \mathcal O_E \otimes_{\mathbb Z} A_{\delta}$, without the factor $C$.
\end{proof}

\bibliographystyle{amsplain}
\bibliography{biblio}

\end{document}